\undefined\PassOptionsToPackage{dvips}{graphicx}%
\title{Modules projectifs de type fini, \alis \cros et inverses g\'en\'eralis\'es}
\author{ Gema M. Diaz--Toca$^{*}$\\
Dpto. de Matematicas Aplicada\\
Universidad de Murcia, Spain\\
{\tt gemadiaz@um.es}
\and
Laureano Gonzalez--Vega\thanks{Partially 
supported by MCyT grant BFM 2002-04402-C02-0.}\\
       Dpto. de Matem\'aticas \\
Univ. of Cantabria, Spain \\  {\tt laureano.gonzalez@unican.es}
\and
Henri Lombardi \\ \'Equipe de Math\'ematiques, UMR CNRS 6623
\\ Univ. de Franche-Comt\'e, France
\\ {\tt henri.lombardi@univ-fcomte.fr}
\and
Claude Quitt\'e \\
Laboratoire de Math\'ematiques,
SP2MI, \\
Universit\'e de Poitiers, France\\
{\tt quitte@math.univ-poitiers.fr}}
\date{ 2004 \\
(version plus d\'etaill\'ee de l'article paru au Journal of Algebra {\bf 303} (2006) 450--475)}
\DeclareSymbolFont{lasy}{U}{lasy}{m}{n}
\let\Box\undefined
\DeclareMathSymbol\Box{\mathord}{lasy}{"32}
\newtheorem{prop}{Proposition}[section]
\newtheorem{theorem}[prop]{Th\'eor\`eme}
\newtheorem{thdef}[prop]{Th\'eor\`eme et d\'efinition}
\newtheorem{proposition}[prop]{Proposition}
\newtheorem{corollary}[prop]{Corollaire}
\newtheorem{lemma}[prop]{Lemme}
\newtheorem{fact}[prop]{Fait}
\newtheorem{definition}[prop]{D\'efinition}
\newtheorem{notation}[prop]{Notation}
\newenvironment{proof}[1]{
\trivlist \item[\hskip \labelsep{\bf #1}]}{\hfill\mbox{$\Box$}
\endtrivlist}
\newcommand \num {{n$^{{\rm o}}$}}
\DeclareRobustCommand{\guig}{\mbox{{\usefont{U}{lasy}%
{\if b\expandafter\@car\f@series\@nil b\else m\fi}{n}%
\char40\kern-0.20em\char40}~}}
\DeclareRobustCommand{\guid}{\mbox{~\usefont{U}{lasy}%
{\if b\expandafter\@car\f@series\@nil b\else m\fi}{n}%
\char41\kern-0.20em\char41}}
\newcommand\gui[1]{\guig{#1}\guid}
\newcommand\junk[1]{}
\newcommand\bul{^\bullet}
\newcommand\cir{^\circ}
\newcommand\wt{\widetilde}
\renewcommand\ss{\smallskip}
\newcommand\ms{\medskip}
\newcommand\sni{\ss \noindent}
\newcommand\noi{\noindent}
\newcommand\qed{\hfill$\sqcap\kern-8.0pt\hbox{$\sqcup$}$}
\newcommand\gen[1]{\left\langle{#1}\right\rangle}
\newcommand\aqo[2]{#1\left/\!\gen{#2}\right.}
\newcommand\tra[1]{{\,^{\rm t}\!#1}}
\newcommand\cmatrix[1]{\left[\matrix{#1}\right]}
\newcommand \rC[1]{{\rm C}_{#1}}
\newcommand \rP[1]{{\rm P}_{\!#1}}
\newcommand \rR[1]{{\rm R}_{#1}}
\newcommand \rd[1]{{\rm d}_{#1}}
\newcommand\Tr{\mathrm{Tr}}
\renewcommand\det{\mathrm{det}}
\newcommand\Adj{\mathrm{Adj}}
\newcommand\Rad{\mathrm{Rad}}
\newcommand\rg{\mathrm{rg}}
\newcommand\Ker{\mathrm{Ker}}
\newcommand\Coker{\mathrm{Coker}}
\renewcommand\Im{\mathrm{Im}}
\newcommand\Id{\mathrm{Id}}
\newcommand\I{\mathrm{I}}
\newcommand\Ig{\mathrm{Ig}}
\newcommand\In{\I_n}
\newcommand \NN{\mathbb{N}}
\newcommand \RR{\mathbb{R}}
\newcommand \QQ{\mathbb{Q}}
\newcommand \CC{\mathbb{C}}
\newcommand\A{\mathbf{A}}
\newcommand\gB{\mathbf{B}}
\newcommand\cC{\mathcal{C}}
\newcommand\cD{\mathcal{D}}
\newcommand\cF{\mathcal{F}}
\newcommand\cG {\mathcal{G}}
\newcommand\cI {\mathcal{I}}
\newcommand\cL{\mathcal{L}}
\newcommand\cO{\mathcal{O}}
\newcommand\cP{\mathcal{P}}
\newcommand\cU{\mathcal{U}}
\newcommand \cad{c'est-\`a-dire }
\newcommand \cade{c'est-\`a-dire en\-co\-re }
\newcommand \ssi {si et seu\-le\-ment si }
\newcommand \Propeq {Les pro\-pri\-\'e\-t\'es sui\-van\-tes sont 
\'equi\-val\-en\-tes~:}
\newcommand\algo{algo\-rithme }  
\newcommand\algos{algo\-rithmes }  
\newcommand\algoz{algo\-rithme}
\newcommand \ali {appli\-ca\-tion lin\'e\-aire }
\newcommand \alis {appli\-ca\-tions lin\'e\-aires }
\newcommand \alisz {appli\-ca\-tions lin\'e\-aires}
\newcommand\Amo{$\A$--\,mo\-du\-le }  
\newcommand\Amos{$\A$--\,mo\-du\-les }  
\newcommand\Amosz{$\A$--\,mo\-du\-les}  
\newcommand \arith{arith\-m\'e\-ti\-que }  
\newcommand \ariths{arith\-m\'e\-ti\-ques }  
\newcommand \arithsz{arith\-m\'e\-ti\-ques}
\newcommand \auto {au\-to\-mor\-phis\-me }
\newcommand \cara{ca\-rac\-t\'e\-ris\-ti\-que }  
\newcommand \caraz{ca\-rac\-t\'e\-ris\-ti\-que}
\newcommand \coe{coef\-fi\-cient }  
\newcommand \coes{coef\-fi\-cients }
\newcommand \coli {com\-bi\-nai\-son \lin }
\newcommand \comz {co\-ma\-xi\-maux}
\newcommand \cro {croi\-s\'ee }
\newcommand \cros {croi\-s\'ees }
\newcommand \crosz {croi\-s\'ees}
\newcommand \deter{d\'e\-ter\-mi\-nant }  
\newcommand \deterz{d\'e\-ter\-mi\-nant}
\newcommand \dfn{d\'e\-fi\-ni\-tion }
\newcommand \egt {\'e\-ga\-li\-t\'e }
\newcommand \egts {\'e\-ga\-li\-t\'es }
\newcommand \egtz {\'e\-ga\-li\-t\'e}
\newcommand \elr{\'e\-l\'e\-men\-tai\-re }  
\newcommand \elrs{\'e\-l\'e\-men\-tai\-res }  
\newcommand \elrz{\'e\-l\'e\-men\-tai\-re}  
\newcommand \elrsz{\'e\-l\'e\-men\-tai\-res}  
\newcommand \elt{\'e\-l\'e\-ment }  
\newcommand \elts{\'e\-l\'e\-ments }  
\newcommand \eltsz{\'e\-l\'e\-ments}
\def \endo {en\-do\-mor\-phis\-me }
\def \endos {en\-do\-mor\-phis\-mes }
\def \endoz {en\-do\-mor\-phis\-me}
\newcommand\evcs{es\-pa\-ces vec\-to\-riels }
\newcommand\gne{g\'e\-n\'e\-ra\-li\-s\'e }  
\newcommand\gnes{g\'e\-n\'e\-ra\-li\-s\'es }  
\newcommand\gnees{g\'e\-n\'e\-ra\-li\-s\'ees }  
\newcommand\gnez{g\'e\-n\'e\-ra\-li\-s\'e}  
\newcommand\gnesz{g\'e\-n\'e\-ra\-li\-s\'es}
\newcommand\gnle{g\'e\-n\'e\-ra\-le }
\newcommand\gnlt{g\'e\-n\'e\-ra\-le\-ment }
\newcommand\gnn{g\'e\-n\'e\-ra\-li\-sa\-tion }  
\newcommand\gnns{g\'e\-n\'e\-ra\-li\-sa\-tions }
\newcommand\gtr{g\'e\-n\'e\-ra\-teur }  
\newcommand\gtrs{g\'e\-n\'e\-ra\-teurs }
\newcommand \id {id\'e\-al }
\newcommand \ids {id\'e\-aux }
\newcommand \idz {id\'e\-al}
\newcommand \ida {iden\-ti\-t\'e al\-g\'e\-bri\-que }
\newcommand \idas {iden\-ti\-t\'es al\-g\'e\-bri\-ques }
\newcommand \idaz {iden\-ti\-t\'e al\-g\'e\-bri\-que}
\newcommand \idc  {iden\-ti\-t\'e de Cramer }
\newcommand \idcs {iden\-ti\-t\'es de Cramer }
\newcommand \idd  {id\'eal d\'eter\-mi\-nantiel }
\newcommand \idds {id\'eaux d\'eter\-mi\-nantiels }
\newcommand \idfs {id\'eaux de Fitting }
\newcommand \idm {idem\-po\-tent }
\newcommand \idms {idem\-po\-tents }
\newcommand \idmz {idem\-po\-tent}
\newcommand \idmsz {idem\-po\-tents}
\newcommand \idt {iden\-tit\'e }
\newcommand \idts {iden\-tit\'es }
\newcommand \idtr {ind\'e\-ter\-mi\-n\'ee }
\newcommand \idtrz {ind\'e\-ter\-mi\-n\'ee}
\newcommand \ing {inver\-se \gne }
\newcommand \ings {inver\-ses \gnes }
\newcommand \ingz {inver\-se \gnez}
\newcommand \ingsz {inver\-ses \gnesz}
\newcommand \iMPz {inver\-se de Moo\-re-Pen\-ro\-se}
\newcommand \iMPsz {inver\-ses de Moo\-re-Pen\-ro\-se}
\newcommand \iso {iso\-mor\-phis\-me }
\newcommand \isos {iso\-mor\-phis\-mes }
\newcommand \isosz {iso\-mor\-phis\-mes}
\newcommand \isoz {iso\-mor\-phis\-me}
\newcommand \itf {\id \tf}
\newcommand \itfs {\ids \tf}
\newcommand \itfz {\id \tfz}
\newcommand \lin {lin\'e\-aire }
\newcommand \lins {lin\'e\-aires }
\newcommand \lnl {loca\-le\-ment \nl}
\newcommand \lnls {loca\-le\-ment \nls}
\newcommand \lnlz {loca\-le\-ment \nlz}
\newcommand \mpf {modu\-le \pf}
\newcommand \mptf {modu\-le \ptf}
\newcommand \mptfs {modu\-les \ptfs}
\newcommand \mptfsz {modu\-les \ptfsz}
\newcommand \mptfz {modu\-le \ptfz}
\newcommand \mrcs {modu\-les de rang cons\-tant }
\newcommand \mtfsz {modu\-les \tfz}
\newcommand \nl {sim\-ple }
\newcommand \nlz {sim\-ple}
\newcommand \nls {sim\-ples }
\newcommand \nlsz {sim\-ples}
\newcommand\op{op\'e\-ra\-tion }  
\newcommand\ops{op\'e\-ra\-tions }
\newcommand\opari{\op\arith}  
\newcommand\oparis{\ops\ariths}  
\newcommand\oparisz{\ops\arithsz}
\newcommand\orts{or\-tho\-go\-naux }  
\newcommand\ortes{or\-tho\-go\-na\-les }
\newcommand\ortsz{or\-tho\-go\-naux}
\newcommand\paral{pa\-ral\-l\`e\-le }  
\newcommand\parals{pa\-ral\-l\`e\-les }  
\newcommand\paralz{pa\-ral\-l\`e\-le}  
\newcommand\paralsz{pa\-ral\-l\`e\-les}
\newcommand\paralm{pa\-ral\-l\`e\-le\-ment }
\newcommand\pb{pro\-bl\`e\-me }
\newcommand \pf {de pr\'e\-sen\-ta\-tion finie }
\newcommand \pfz {de pr\'e\-sen\-ta\-tion finie}
\newcommand\pol{po\-ly\-n\^o\-me }  
\newcommand\pols{po\-ly\-n\^o\-mes }  
\newcommand\polsz{po\-ly\-n\^o\-mes}
\newcommand\polcar{\pol \cara}  
\newcommand\polcarz{\pol \caraz}
\newcommand \prn {pro\-jec\-tion }
\newcommand \prns {pro\-jec\-tions }
\newcommand \prnsz {pro\-jec\-tions}
\newcommand \pro {pro\-jec\-tif }
\newcommand \pros {pro\-jec\-tifs }
\newcommand \ptf {\pro \tf}
\newcommand \ptfs {\pros \tf}
\newcommand \ptfsz {\pros \tfz}
\newcommand \ptfz {\pro \tfz}
\newcommand \qnl {quasi-\nl}
\newcommand \qnlz {quasi-\nlz}
\newcommand \sli {sys\-t\`e\-me li\-n\'e\-ai\-re }
\newcommand \slis {sys\-t\`e\-mes li\-n\'e\-ai\-res }
\newcommand \slisz {sys\-t\`e\-mes li\-n\'e\-ai\-res}
\newcommand \tf {de type fini }
\newcommand \tfz {de type fini} 
\newcommand \Tho {Th\'eo\-r\`e\-me }
\newcommand \Thos {Th\'eo\-r\`e\-mes }
\newcommand \tho {th\'eo\-r\`e\-me }
\newcommand \thos {th\'eo\-r\`e\-mes }
\newcommand \thoz {th\'eo\-r\`e\-me}
\newcommand \cof {cons\-truc\-tif }
\newcommand \cov {cons\-truc\-ti\-ve }
\newcommand \covsz {cons\-truc\-ti\-ves}
\newcommand \comaz {\maths\covsz}
\def \cot {cons\-truc\-ti\-ve\-ment }
\newcommand \maths {ma\-th\'e\-ma\-ti\-ques }
\newcommand \prco {preuve \cov}
\begin{document}   

\maketitle
\begin{abstract}  
D'une part, nous d\'eveloppons la th\'eorie g\'en\'erale des \ings
 de matrices en la mettant en rapport avec la th\'eorie constructive des
\mptfsz. D'autre part nous pr\'ecisons certains aspects de cette th\'eorie li\'es 
au calcul formel et \`a l'analyse num\'erique matricielle. Nous d\'emontrons en 
particulier qu'on peut tester si un
\Amo \pf est projectif et calculer une matrice de projection correspondante 
\gui{en temps polynomial}. Plus pr\'ecis\'ement pour une
matrice $A\in \A^{m \times n}$ on peut d\'ecider s'il existe un \ing $B$ pour $A$ 
(\cad une matrice $B$ v\'erifiant $ABA=A$ et $BAB=B$) et, en cas de r\'eponse 
positive, calculer un tel \ing par un \algo qui utilise 
$\cO(p^6\,q^{2})$ \oparis (avec $p=\inf(m,n)$, $q=\sup(m,n)$) et un nombre 
polynomial de tests d'appartenance d'un \elt \`a un id\'eal engendr\'e par \gui{un 
petit nombre d'\eltsz.}. 
\end{abstract}
\section{Introduction}       
Dans cet article  $\A$ d\'esigne un anneau commutatif arbitraire.
D'une part, nous d\'eveloppons la th\'eorie g\'en\'erale des \ings
 de matrices  en la mettant en rapport avec la th\'eorie constructive des
\mptfsz. D'autre part nous pr\'ecisons certains aspects de cette th\'eorie li\'es 
au calcul formel et \`a l'analyse num\'erique matricielle.

Nous utiliserons une mesure assez grossi\`ere de la complexit\'e des calculs sur 
machine:
cette complexit\'e sera mesur\'ee essentiellement \`a travers le nombre d'\oparis 
de base dans $\A$.

Nous supposerons en outre souvent qu'il y a sur l'anneau $\A$ un test explicite 
d'appartenance \`a un \itf 
(l'anneau est \gui{fortement discret}
selon la terminologie des \comaz).
Par exemple un corps explicite est fortement discret \ssi il poss\`ede un test 
d'\'egalit\'e \`a z\'ero.
Nous supposerons aussi que ce test 
pour \gui{$x\in\gen{x_1,\ldots ,x_n}~?$} (avec la r\'eponse compl\`ete en cas 
d'appartenance) utilise un nombre d'op\'erations \gui{\elrsz}
born\'e par $\cO(n^{s})$ (nous ne pr\'ecisons pas plus la nature exacte de ces 
op\'erations). 
Nous dirons alors que $\A$  est $\cO(n^{s})$-fortement discret.
Notez que le test \`a z\'ero utilise donc un nombre d'\ops \elrs
born\'e par une constante.

Dans la suite une \gui{op\'eration \elrz} sera ou bien une \opari de base dans 
l'anneau, ou bien l'une des op\'erations \'el\'ementaires qui interviennent dans 
le test d'appartenance \`a un \itfz.

Par exemple on a facilement.
\markboth{Introduction}{Introduction}
\begin{lemma} 
\label{lemIdIdm} 
Sur un anneau  $\cO(n^{s})$-fortement discret, on a un test pour d\'eterminer si 
un \itf $\gen{x_1,\ldots ,x_n}$
est \idm et donner, en cas de r\'eponse positive un \gtr \idm de l'\idz.
Ce test utilise un nombre d'\oparis en $\cO(n^4)$ et un nombre d'autres \ops \elrs 
en  $\cO(n^{2s+1})$.
\end{lemma}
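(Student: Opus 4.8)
Le plan consiste à exploiter le fait classique qu'un \itf \idm est engendré par un \idmz, obtenu par l'astuce du \deterz. Supposons $I=\gen{x_1,\ldots ,x_n}=I^2$. Alors chaque $x_i$ appartient à $I^2$, donc s'écrit $x_i=\sum_j a_{ij}\,x_j$ avec $a_{ij}\in I$, \cad $(\In-A)\,\mathbf{x}=0$ où $A=(a_{ij})$ et $\mathbf{x}=\tra{(x_1,\ldots ,x_n)}$. En multipliant à gauche par la matrice adjointe $\Adj(\In-A)$, on obtient $d\,x_i=0$ pour tout $i$, avec $d=\det(\In-A)$. Comme toutes les entrées de $A$ sont dans $I$, le développement du \deter donne $d=1-e$ avec $e\in I$. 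De $(1-e)x_i=0$ on tire $x_i=e\,x_i$; en écrivant $e=\sum_i b_i x_i$ et en utilisant $e\,x_i=x_i$, on obtient $e^2=\sum_i b_i(e\,x_i)=\sum_i b_i x_i=e$, donc $e$ est \idmz, et $I=\gen{e}$ puisque $x_i=e\,x_i\in\gen{e}$. L'\idm cherché est ainsi $e=1-\det(\In-A)$.

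Le test proprement dit revient à vérifier l'inclusion $I\subseteq I^2$, \cad que chaque $x_i$ appartient à $I^2=\gen{x_j x_k:1\le j\le k\le n}$, \idz ayant $\cO(n^2)$ \gtrsz. On forme d'abord les $\cO(n^2)$ produits $x_j x_k$ ($\cO(n^2)$ \oparisz), puis on applique, pour chaque $i$, le test d'appartenance $x_i\in I^2$. Chacun de ces $n$ tests porte sur un \idz à $\cO(n^2)$ \gtrsz, donc coûte $\cO\big((n^2)^{s}\big)=\cO(n^{2s})$ \ops \elrsz, soit $\cO(n^{2s+1})$ au total: c'est la borne annoncée pour les \ops \elrs autres qu'\arithsz. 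Si l'un des tests échoue, $I$ n'est pas \idmz; sinon la réponse complète fournit des \coes $c_{i,jk}$ tels que $x_i=\sum_{j\le k}c_{i,jk}\,x_j x_k$.

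Il reste à produire $e$. En regroupant les \coes précédents selon la première variable de chaque produit, on lit une matrice $A=(a_{ij})$ à \coes dans $I$ vérifiant $x_i=\sum_j a_{ij}x_j$, chaque $a_{ij}$ étant une \coli des $x_k$ calculée en $\cO(n)$ \oparisz, soit $\cO(n^3)$ pour toute la matrice. On calcule enfin $d=\det(\In-A)$ et $e=1-d$, et aucune vérification supplémentaire de l'idempotence n'est requise, celle-ci étant garantie par l'argument du premier paragraphe.

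Le point délicat est le calcul de ce \deter $n\times n$ sur un anneau commutatif \emph{arbitraire}: la division n'étant pas disponible, on ne peut recourir au pivot de Gauss. On utilise donc un \algo sans division (de type Samuelson--Berkowitz) qui calcule le \deter en $\cO(n^4)$ \oparisz. C'est ce calcul qui domine le coût \arith et fournit la borne $\cO(n^4)$, les $\cO(n^2)$ produits initiaux et les $\cO(n^3)$ \oparis de construction de $A$ y étant absorbés. Ainsi le test et la production du \gtr \idm utilisent $\cO(n^4)$ \oparis et $\cO(n^{2s+1})$ autres \ops \elrsz, comme annoncé.
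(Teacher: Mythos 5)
Your proof is correct and follows essentially the same route as the paper's: the determinant trick producing the idempotent $e=1-\det(\In-A)$, the $n$ membership tests \gui{$x_i\in\gen{x_1,\ldots ,x_n}^2\,?$} accounting for the $\cO(n^{2s+1})$ other elementary operations, and the determinant computation (division-free) accounting for the $\cO(n^4)$ arithmetic operations. The paper's proof is a three-line sketch of exactly this argument; yours fills in the details.
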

\begin{proof}{Preuve}
R\'esulte imm\'ediatement du \gui{d\'eterminant trick} qui prouve qu'un
\itf \idm est engendr\'e par un \idmz. On a besoin du r\'esultat des
 $n$ tests d'appartenance \gui{$x_i\in\gen{x_1,\ldots ,x_n}^2\,?$}. Le  $\cO(n^4)$ 
\oparis provient
du calcul du \deter qui fournit l'\idm recherch\'e. 
\end{proof}

Un \sli sur  $\A,$ pr\'esent\'e sous forme matricielle $AX=Y$ 
($A\in\A^{m{\times}n}$), est parti\-cu\-li\`erement \gui{agr\'eable} si on peut 
calculer
une solution (quand il en existe une) en fonction lin\'eaire de $Y$,
autrement dit, quand il existe une matrice $B\in\A^{n{\times}m}$  
telle que $A\,B\,A\,X=A\,X$ pour tout $X$, i.e. $A\,B\,A=A$.
Dans le cas o\`u ceci est possible, nous disons que l'\ali d\'efinie
par $A$ est \emph{\lnlz}. Si en outre
$B\,A\,B=B$ la matrice $B$ est appel\'ee un \emph{\ingz} de~$A$.

La litt\'erature sur le sujet des \ings est assez consid\'erable.
Nous renvoyons plus particuli\`erement \`a \cite{BIG},  \cite{Bha}, \cite{BP}, 
\cite{Lan} ou \cite{RM}.

Pour ce qui concerne les \mptfs qui donnent pour l'essentiel la m\^{e}me th\'eorie 
sous une forme un peu plus abstraite, nous renvoyons \`a \cite{Nor} et pour un 
traitement \elr et \cof \`a \cite{LQ}. 

\ms Nous citons maintenant quelques r\'esultats significatifs obtenus dans le 
travail pr\'esent.

Nous devons d'abord introduire (ou rappeler) quelques d\'efinitions.

Soient $E$ et $F$ deux \Amosz.  Deux \alis $\varphi:E\rightarrow F$ et 
$\varphi\bul:F\rightarrow E$ sont dites \emph{\crosz} si on a: 
\begin{equation} \label{eqSDO}
  \Im\,\varphi \oplus \Ker\,\varphi\bul= F\,, \; 
\; \Ker\,\varphi\oplus \Im\,\varphi\bul= E\, 
\end{equation}

Nous notons $Q_n$ la matrice diagonale ayant pour \coe en position $(k,k)$ la 
puissance $t^{k-1}$ o\`u $t$ est une \idtrz. Si $A\in\A^{m{\times}n}$ on note 
$A\cir$ la matrice ${Q_m}^{-1}\; A\; Q_n$. 

L'anneau $\A(t)$ est le localis\'e $S^{-1}\A[t]$ o\`u $S$ est l'ensemble des \pols 
primitifs (i.e., les \coes engendrent l'id\'eal $\gen{1}$).

Certains des \'enonc\'es qui suivent sont un peu moins pr\'ecis que dans le texte.

Les deux premiers \thos que nous citons doivent sans doute se trouver
dans la litt\'erature. Du moins il est raisonnable de penser qu'ils font partie du 
folklore. 


\medskip\noindent
{\bf \Thos \ref{thCHam} et \ref{corthIgRangr2}}
{\it  Soient $E$ un \Amo \ptfz,  $\varphi:E\rightarrow E$ une \ali et 
$\rP{\varphi}(Z)=\det(\,\Id_E + Z\varphi)=1+\sum_{\ell\geq 1} d_\ell\,Z^\ell$. 
\Propeq
\begin{enumerate}
\item  $\varphi$ est crois\'ee avec elle-m\^{e}me, et $\Im\,\varphi$ 
est un module projectif de rang $k$.
\item $\varphi$ est de rang $\leq k$  et $d_k$ est inversible.
\item $\deg\,\rP{\varphi}\leq k$, 
$d_ k$ est inversible et, en d\'efinissant $\pi$ par

\sni\centerline{$
\pi=d_{k-1}\varphi-d_{k-2}\varphi ^2+
\cdots +(-1)^{k-1}\varphi^k,\quad \quad ~$}

\sni on a les \egts $\pi\,\varphi=d_ k\,\varphi$ et $\pi^2=d_k\,\pi$.
\end{enumerate}
}

Les \thos qui suivent sont, \`a notre connaissance, nouveaux.


\medskip\noindent
{\bf \Thos \ref{thIgRangr} et \ref{corthIgRangr}}
{\it   
Soient $E$ et $F$ deux \Amos \ptfs et deux \alis $\varphi:E\rightarrow F$ et 
$\varphi\bul:F\rightarrow E$. Posons  $\rP{\varphi \varphi\bul}(Z)=\det(\,\Id_F + 
Z\varphi \varphi\bul)=1+ \sum_{\ell\geq 1} a_\ell\, Z^\ell$. \Propeq
\begin{enumerate}
\item  $\varphi$ et $\varphi\bul$ sont crois\'ees et $\Im\,\varphi$ 
est un module projectif de rang $k$.
\item  $\varphi$ et $\varphi\bul$ sont de rang $\leq k$ et  $a_k$ est inversible.
\item   $\deg\,\rP{\varphi \varphi\bul}\leq k$, $a_k$ est inversible et, en 
d\'efinissant $\theta$ par

\sni\centerline{$
\theta =a_{k-1}\, \varphi \bul
-a_{k-2}\,\varphi \bul\varphi \varphi \bul+
\cdots +(-1)^{k-1}\varphi \bul(\varphi \varphi \bul)^{k-1}$,}

\sni on a les deux \egts $\varphi\,\theta\,\varphi=a_k\,\varphi$ et 
$\varphi\bul\,\varphi \,\theta =a_k\,\varphi\bul$.
\end{enumerate}
}


\medskip\noindent
{\bf \Tho \ref{thDecision1}}
{\it  
Soient $E$ et $F$ deux \Amos \ptfs engendr\'es par $n$ \elts (ou moins), et deux 
\alis $\varphi:E\rightarrow F$ et $\varphi\bul:F\rightarrow E$. Alors on peut, 
avec un nombre
d'\oparis en $\cO(n^4)$, et un nombre de tests \gui{$x\in\gen{y}\,?$} en 
$\cO(n^3)$, d\'ecider si $\varphi$ et $\varphi\bul$ sont \crosz, et en cas de 
r\'eponse positive calculer des \ings  de $\varphi$ et $\varphi\bul$ en $\cO(n^4)$ 
\oparisz.
}

\junk{\medskip Deux variantes (\thos \ref{thDecisionO} et \ref{thDecision1bis})
donnent des versions de l'\algo pr\'ec\'edent en $\cO(\log^2n)$ \'etapes
\paralsz.}


\medskip\noindent
{\bf \Tho \ref{thIGGRC2}}
{\it 
Soit  une matrice $A\in\A^{m{\times}n}$. On pose $P(Z,t)=\det(\I_n+ZAA\cir)=1+ 
\sum_{\ell\geq 1}g_\ell(t)Z^\ell$.
Les propri\'et\'es suivantes sont \'equivalentes:
\begin{enumerate}
\item $A$ est \lnl  de rang $k$ sur $\A$.
\item $A$ est \lnl  de rang $k$ sur $\A(t)$. 
\item $A$ et $A\cir$ sont \cros sur $\A(t)$, de rang $k$.
\item $A$ et $A\cir$ sont \cros sur $\A(t)$,
$\deg_Z(P)\leq k$ et 
le \pol $t^{k(n-k)}g_k(t)$ est primitif.  
\item  $\deg_Z(P)\leq k$,
le \pol $t^{k(n-k)}g_k(t)$ est primitif et si on pose
$B(t)=g_{k-1}(t)A\cir-g_{k-2}(t)A\cir A A\cir  + \cdots+
(-1)^{k-1}(A \cir A )^{k-1}A \cir$, on a $A\cdot B(t)\cdot A=g_k(t)\,A$.
\end{enumerate}
Si $\A$ est un anneau r\'eduit, la derni\`ere condition  se simplifie en 
\gui{$\deg_Z(P)\leq k$ et 
le \pol $t^{k(n-k)}g_k(t)$ est primitif}.
Lorsque les conditions sont v\'erifi\'ees  $B(t)/g_k(t)$ est un \ing de
$A$ sur l'anneau $\A(t)$.
 }


\medskip\noindent
{\bf \Tho \ref{thIGGRC2bis}}
{\it 
Soit  une matrice $A\in\A^{m{\times}n}$.
Les propri\'et\'es suivantes sont \'equivalentes:
\begin{enumerate}
\item $A$ est \lnl sur $\A$. 
\item $A$ est \lnl sur $\A(t)$. 
\item $A$ et $A\cir$ sont \cros sur $\A(t)$.  
\end{enumerate}
 }


\medskip\noindent
{\bf \Tho \ref{thIGGG}}
{\it Sur un anneau $\A$ fortement discret,
on peut tester si une matrice $A\in\A^{m{\times}n}$ est \lnlz, et en cas de 
r\'eponse positive, calculer un inverse \gne
de la matrice.  Soit $p=\min(m,n)$, $q=\max(m,n)$.
Si l'anneau est  $\cO(n^{s})$-fortement discret, ces calculs consomment 
$\cO(p^6\,q^{2}+p\,q^{4})$ \oparis et 
$\cO(p^4\,q+pq^{2s+1})$ autres \ops \elrsz.
Avec les m\^{e}mes bornes de complexit\'e, on calcule un \ing de $A$ et des 
matrices de
\prn sur le noyau et sur l'image de $A$.
 }

\medskip 
Dans nos calculs de complexit\'e, nous avons utilis\'e les \algos de 
multiplication usuels pour les \pols et les matrices.
On peut donc am\'eliorer les performances en utilisant des
\algos de multiplication rapide.

\medskip 
Signalons enfin que les preuves de cet article
reposent en partie sur des \idts de Cramer \gnees 
(voir sections  \ref{subsecIdtCra1} et \ref{subsecInter})
dont nous avons eu du mal \`a trouver la trace dans la litt\'erature.
Nous remercions \`a ce sujet d'une part les statisticiens indiens et d'autre part 
Mustapha Rais pour un expos\'e \`a Poitiers dans lequel il interpr\'etait les 
r\'esultats de \cite{DiGL} au moyen de la th\'eorie des invariants.
\section{Identit\'es de Cramer et premier \ing} 
\label{subsecIdtCra}

\subsection{Formules de Cramer usuelles et inusuelles}    
\label{subsecIdtCra1}
Une matrice $A\in\A^{m{\times}n}$ sera dite \emph{de rang $\leq k$}
si tous les mineurs d'ordre $k+1$ sont nuls.
Pour une matrice $A\in\A^{m{\times}n}$ nous noterons 
$A_{\alpha,\beta}$ la matrice extraite sur les lignes  
$\alpha=\{\alpha_1,\ldots ,\alpha_r\}\subset\{1,\ldots,m\}$ et les colonnes  
$\beta=\{\beta_1,\ldots ,\beta _s\}\subset\{1,\ldots,n\}$.

Si $B$ est une matrice carr\'ee d'ordre $n$, nous notons $\wt{B}$ ou $\Adj\,B$ la 
matrice cotranspos\'ee (on dit parfois adjointe). La forme \elr des
\idcs s'\'ecrit alors $B\,\wt{B}=\wt{B}\,B=\det\,B\;\I_n$.

Supposons la matrice $A$ de rang $\leq k$. Soit $V\in\A^{m{\times}1}$ un vecteur 
colonne tel que  $(A\,|\,V)$ soit aussi de rang $\leq k$.
Appelons $A_j$ la $j$-\,\`eme colonne de $A$.
Soit $\mu_{\alpha,\beta}=\det(A_{\alpha,\beta})$ le mineur 
d'ordre $k$ de la matrice $A$ extrait sur les lignes  
$\alpha=\{\alpha_1,\ldots ,\alpha_k\}$ et les colonnes  
$\beta=\{\beta_1,\ldots ,\beta _k\}$. 
Pour $j=1,\ldots ,k$ soit $\nu_{\alpha,\beta,j}$ le 
\deter
de la m\^{e}me matrice extraite, \`a ceci pr\`es que la 
colonne $j$
a \'et\'e remplac\'ee par la colonne extraite de $V$  
sur les lignes   $\alpha$.
Alors on obtient pour chaque couple $(\alpha,\beta)$ de
multi-indices et chaque $j\in\{1,\ldots ,k\}$ une \idt de Cramer:
\begin{equation} \label{eqMPC1}
\qquad \mu_{\alpha,\beta}\;V=\sum\nolimits_{j=1}^k 
\nu_{\alpha,\beta,j}\,A_{\beta_j}\qquad  
\end{equation}
due au fait que le rang de la matrice 
$(A_{1..m,\beta}\,|\,V)$ est  
$\leq k$. Ceci peut se relire comme 
suit:
$$\begin{array}{rcl} 
\qquad \mu_{\alpha,\beta}\;V&=& \left[
\begin{array}{ccc}
    A_{\beta_1} & \ldots  & A_{\beta_k}
\end{array} 
\right] \,  \left[
\begin{array}{c}
      \nu_{\alpha,\beta,1} \\
    \vdots  \\ 
     \nu_{\alpha,\beta,k}
\end{array}
\right]=\\[10mm]
 &=&\left[
\begin{array}{ccc}
    A_{\beta_1} & \ldots  & A_{\beta_k}
\end{array} 
\right] \,
\Adj(A_{\alpha,\beta})
 \,
 \left[
 \begin{array}{c}
     v_{\alpha_1}  \\
     \vdots   \\
     v_{\alpha_k}
 \end{array} \right]=\\[10mm]
 &=&A \, (\I_n)_{1.. n,\beta} \,
\Adj(A_{\alpha,\beta}) \, (\I_m)_{\alpha,1.. m}\, V
\end{array}$$

Ceci nous conduit \`a introduire la notation suivante
\begin{notation} 
\label{notaAdjalbe}
{\rm Nous notons $\cP_{k,\ell}$ l'ensemble des parties \`a $k$ \elts de
$\{1,\ldots ,\ell\}$. Pour  $A\in\A^{m{\times}n}$ et $\alpha\in 
\cP_{k,m},\,\beta\in \cP_{k,n}$ nous notons  
$$\Adj_{\alpha,\beta}(A):=
(\I_n)_{1.. n,\beta} \,
\Adj(A_{\alpha,\beta}) \, (\I_m)_{\alpha,1.. m}\,.$$
} 
\end{notation}

L'\'egalit\'e pr\'ec\'edente s'\'ecrit alors:
\begin{equation} \label{eqGema}
\mu_{\alpha,\beta}\;V\;=\;A \, \Adj_{\alpha,\beta}(A) \, V
\end{equation}

Comme cons\'equence on obtient, toujours sous l'hypoth\`ese que $A$ est de rang 
$\leq k$:
\begin{equation} \label{eqIGCram}
\mu_{\alpha,\beta}\;A\;=\;A \, \Adj_{\alpha,\beta}(A) \, A
\end{equation}

Voici un exemple de l'\egt $\mu_{\alpha,\beta}\;V\;=\;A \, \Adj_{\alpha,\beta}(A) 
\, V$ pour voir la matrice $\Adj_{\alpha,\beta}(A)$. Supposons que nous avons
le syst\`eme lin\'eaire:
$$\left[\begin{array}{ccc} 5&-5&7\\
\noalign{\medskip}9&0&2\\\noalign{\medskip}13&5&-3\end {array}
\right]\, X=\left[\begin{array}{c} 26 \\ \noalign{\medskip} 6 \\
\noalign{\medskip} -14 \end {array} \right]
=\left[\begin{array}{c} v_1 \\ \noalign{\medskip} v_2 \\
\noalign{\medskip} v_3 \end {array} \right],$$
 avec
$\rg(A)=\rg(A\,|\,V)=2$. Prenons $\alpha=\{1,2\}$ et
$\beta=\{2,3\}$, alors:

$$ \mu_{\alpha,\beta}=\left| \begin {array}{cc} -5&7\\\noalign{\medskip}0&2\end
{array} \right|, \; \sigma_{\alpha,\beta,1}=\left| \begin
{array}{cc} 26 & 7 \\ \noalign{\medskip}6 &2\end {array} \right|,
\, \sigma_{\alpha,\beta,2}=\left| \begin {array}{cc}
-5&26\\\noalign{\medskip}0&6\end {array} \right|, \,
\Adj(A_{\alpha,\beta})=\left[ \begin {array}{cc}
2&-7\\\noalign{\medskip}0&-5\end {array} \right],
$$
$$
(\I_3)_{1..3,\beta}=\left[\begin{array}{cc}
    0 & 0 \\
    1 & 0 \\
    0 & 1
  \end{array}\right], \quad (\I_3)_{\alpha,1.. 3}=\left[\begin{array}{ccc}
    1& 0 & 0 \\
    0 & 1 & 0 \\
  \end{array}\right],\quad \Adj_{\alpha,\beta}(A)=\left[
   \begin {array}{ccc} 0&0&0\\\noalign{\medskip}2&-7&0\\
   \noalign{\medskip}0&-5&0\end {array} \right],
$$

et
  \begin{eqnarray*}
\mu_{\alpha,\beta}\;V &=& \sigma_{\alpha,\beta,1}\,A_2
+\sigma_{\alpha,\beta,2}\,A_3 = \left[
\begin{array}{cc}
     A_2 &  A_3
\end{array}
\right] \, \Adj(A_{\alpha,\beta})
  \,
  \left[
  \begin{array}{c}
      v_1  \\
      v_2
  \end{array} \right]=\\
  &=&A \, \left[\begin{array}{cc}
    0 & 0 \\
    1 & 0 \\
    0 & 1
  \end{array}\right] \,
\Adj(A_{\alpha,\beta}) \, \left[\begin{array}{ccc}
    1& 0 & 0 \\
    0 & 1 & 0 \\
  \end{array}\right]\, V
  =
A \, \Adj_{\alpha,\beta}(A) \, V
\end{eqnarray*}
\begin{definition} \label{defIdDet} 
Soit $A\in\A^{m\times n}$, les {\em  \idds de la matrice $A$}  sont les id\'eaux 

\ss\centerline{$\cD_k(A) \; 
:=  \;$  l'id\'eal   engendr\'e  par  les   mineurs d'ordre  $k$  de  la  matrice   
$A$
}

\sni o\`u $k$ est un entier arbitraire. 
Pour $k\leq 0$ les mineurs sont par convention \'egaux \`a 1, pour 
$k> \min(m,n)$ ils sont par convention \'egaux \`a $0$.
Si $A$ est la matrice d'une \ali $\varphi$ les \ids $\cD_k(A)$ ne d\'ependent que 
de $\varphi$ et sont donc aussi appel\'es 
{\em  \idds de l'\ali $\varphi$}.
\end{definition}

Les \idts de Cramer vues pr\'ec\'edemment fournissent des congruences qui ne sont 
soumises \`a aucune hypoth\`ese: il suffit par exemple de lire (\ref{eqGema}) dans 
l'anneau quotient $\A/\cD_{k+1}(A\,|\,V)$ pour obtenir la congruence 
(\ref{eqGema2}).

\begin{lemma} 
\label{lemCram}  Avec les notations pr\'ec\'edentes mais sans aucune hypoth\`ese 
sur la matrice $A$ ou le vecteur $V$ on a pour $\alpha\in \cP_{k,m},\;\beta\in 
\cP_{k,n}:$
\begin{eqnarray} \label{eqGema2}
\mu_{\alpha,\beta}\;V&\equiv&A \, \Adj_{\alpha,\beta}(A) \, V
\qquad \mathrm{mod}\quad \cD_{k+1}(A\,|\,V)
\\[1mm] \label{eqCGCram}
\mu_{\alpha,\beta}\;A&\equiv&A \, \Adj_{\alpha,\beta}(A) \, A
\qquad \mathrm{mod}\quad \cD_{k+1}(A)\,.
\end{eqnarray}
\end{lemma}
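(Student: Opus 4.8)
The plan is to obtain both congruences by reading the two \emph{exact} Cramer identities (\ref{eqGema}) and (\ref{eqIGCram}) in a well-chosen quotient ring, exactly as suggested just before the statement. The only point needing care is the \emph{naturality} of the operator $A\mapsto\Adj_{\alpha,\beta}(A)$, and I would record this first. By Notation~\ref{notaAdjalbe}, $\Adj_{\alpha,\beta}(A)=(\I_n)_{1.. n,\beta}\,\Adj(A_{\alpha,\beta})\,(\I_m)_{\alpha,1.. m}$ is the product of the cotranspose of $A_{\alpha,\beta}$ --- whose entries are cofactors, hence polynomials with integer coefficients in the entries of $A$ --- with two fixed $0/1$ matrices. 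Therefore every entry of $\Adj_{\alpha,\beta}(A)$ is a polynomial in the entries of $A$, so for any ring homomorphism $\psi\colon\A\to\gB$ one has $\Adj_{\alpha,\beta}(\psi(A))=\psi(\Adj_{\alpha,\beta}(A))$, the map $\psi$ being applied entrywise; the same holds for the scalar minor $\mu_{\alpha,\beta}$, since $\det(\psi(A)_{\alpha,\beta})=\psi(\mu_{\alpha,\beta})$.

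For (\ref{eqGema2}), I take $\gB=\A/\cD_{k+1}(A\,|\,V)$ and let $\psi$ be the canonical surjection. By the very definition of $\cD_{k+1}$, all minors of order $k+1$ of $(A\,|\,V)$ vanish in $\gB$, so $\psi(A\,|\,V)$ is of rank $\leq k$ over $\gB$; since every minor of order $k+1$ of $A$ is also one of $(A\,|\,V)$, a fortiori $\psi(A)$ is of rank $\leq k$ over $\gB$. The hypotheses under which (\ref{eqGema}) was proved are thus satisfied by $\psi(A)$ and $\psi(V)$, and (\ref{eqGema}) read over $\gB$ gives $\psi(\mu_{\alpha,\beta})\,\psi(V)=\psi(A)\,\Adj_{\alpha,\beta}(\psi(A))\,\psi(V)$. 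Using the naturality recorded above to replace $\Adj_{\alpha,\beta}(\psi(A))$ by $\psi(\Adj_{\alpha,\beta}(A))$ and factoring $\psi$ out, both sides are the images under $\psi$ of the two members of (\ref{eqGema2}); their equality in $\gB$ is precisely the announced congruence modulo $\cD_{k+1}(A\,|\,V)$.

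For (\ref{eqCGCram}) I repeat the argument in $\gB=\A/\cD_{k+1}(A)$: here $\psi(A)$ is of rank $\leq k$ over $\gB$, so the exact identity (\ref{eqIGCram}) applies to $\psi(A)$ and, after the same naturality step, returns (\ref{eqCGCram}) modulo $\cD_{k+1}(A)$. If one prefers to reduce (\ref{eqCGCram}) to (\ref{eqGema2}), one applies (\ref{eqGema2}) to each column $V=A_j$ of $A$ and observes that any minor of order $k+1$ of $(A\,|\,A_j)$ is either $0$ (when the repeated column $A_j$ occurs twice) or a minor of $A$, so that $\cD_{k+1}(A\,|\,A_j)\subseteq\cD_{k+1}(A)$; reducing the $j$-th congruence modulo the larger ideal $\cD_{k+1}(A)$ and reassembling the columns yields (\ref{eqCGCram}).

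I do not expect a genuine obstacle here: all the mathematical content sits in the exact identities already derived under a rank hypothesis, and the congruences are merely their images under reduction modulo the appropriate determinantal ideal. The single step that must be stated cleanly is the compatibility of $\Adj_{\alpha,\beta}$ (and of $\mu_{\alpha,\beta}$) with ring morphisms, which is what makes \gui{reducing the identity} and \gui{evaluating the identity at the reduced matrix} the same operation.
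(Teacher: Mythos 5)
Your proof is correct and follows exactly the route the paper intends: the paper's entire justification for this lemma is the sentence preceding it, which says to read the exact identity (\ref{eqGema}) in the quotient ring $\A/\cD_{k+1}(A\,|\,V)$ (and, implicitly, (\ref{eqIGCram}) modulo $\cD_{k+1}(A)$) to obtain the congruences. Your explicit remark on the compatibility of $\mu_{\alpha,\beta}$ and $\Adj_{\alpha,\beta}(A)$ with ring homomorphisms merely makes that one-line argument fully rigorous.
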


Une cons\'equence imm\'ediate de l'\idc (\ref{eqIGCram})
est l'\idt  suivante moins usuelle.

\begin{proposition} 
\label{propIGCram} 
Soit $A\in\A^{m\times n}$ de rang $\leq k$ avec $\cD_k(A)=\gen{1}$.
Pr\'ecis\'ement supposons 
$$\sum\nolimits_{\alpha\in \cP_{k,m},\beta\in \cP_{k,n}} 
c_{\alpha,\beta}\,\mu_{\alpha,\beta}=1 \quad \mathit{et \;posons}
\quad B\;=\;\sum\nolimits_{\alpha\in \cP_{k,m},\beta\in 
\cP_{k,n}}\,c_{\alpha,\beta}\,\Adj_{\alpha,\beta}(A)\,.$$
Alors 
\begin{equation} \label{eqIGCram2}
A\, B\, A=A\,.
\end{equation}
En cons\'equence $A\, B$ est une \prn et $\Im\,A=\Im\,AB$ 
est facteur direct dans~$\A^m$.   
\end{proposition}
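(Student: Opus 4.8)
Le plan est de tout faire reposer sur l'\idc (\ref{eqIGCram}). Celle-ci affirme que, \emph{pourvu que $A$ soit de rang $\leq k$}, on a pour chaque couple $(\alpha,\beta)\in\cP_{k,m}\times\cP_{k,n}$ l'\egt exacte (et non plus seulement une congruence)
$$\mu_{\alpha,\beta}\,A = A\,\Adj_{\alpha,\beta}(A)\,A.$$
Tout l'int\'er\^et est que $A\,B\,A$ d\'epend \lint de $B$, et que $B$ est pr\'ecis\'ement la combinaison \lin des $\Adj_{\alpha,\beta}(A)$ \`a coefficients $c_{\alpha,\beta}$. Je commencerais donc par distribuer $A\,(\cdot)\,A$ sur la somme d\'efinissant $B$, puis par remplacer chaque terme $A\,\Adj_{\alpha,\beta}(A)\,A$ par $\mu_{\alpha,\beta}\,A$ gr\^ace \`a (\ref{eqIGCram}). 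On obtient alors $A\,B\,A=\bigl(\sum_{\alpha,\beta}c_{\alpha,\beta}\,\mu_{\alpha,\beta}\bigr)\,A$, et l'hypoth\`ese $\sum c_{\alpha,\beta}\,\mu_{\alpha,\beta}=1$ (\cad $\cD_k(A)=\gen 1$) donne imm\'ediatement $A\,B\,A=A$, soit (\ref{eqIGCram2}).

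Pour la cons\'equence, on pose $P=A\,B\in\A^{m\times m}$. De $A\,B\,A=A$ on tire $P^2 = A\,B\,A\,B = (A\,B\,A)\,B = A\,B = P$, donc $P$ est idempotente : c'est une \prnz. Il reste \`a identifier son image avec celle de $A$. L'inclusion $\Im\,P\subset\Im\,A$ est claire puisque $P=A\,B$ se factorise par $A$ ; r\'eciproquement, pour $y=A\,x\in\Im\,A$ on a $P\,y = A\,B\,A\,x = A\,x = y$, d'o\`u $y\in\Im\,P$. Ainsi $\Im\,A=\Im\,P=\Im\,AB$. Enfin, l'image d'une matrice idempotente de $\A^{m\times m}$ est toujours facteur direct dans $\A^m$, via la \deco $\A^m=\Im\,P\oplus\Ker\,P$ (pour $v\in\A^m$, \'ecrire $v=P\,v+(v-P\,v)$, le second terme \'etant dans $\Ker\,P$ car $P^2=P$, et v\'erifier que la somme est directe de la m\^eme mani\`ere) ; donc $\Im\,A$ est facteur direct.

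La principale observation est qu'il n'y a en r\'ealit\'e aucun obstacle s\'erieux : toute la difficult\'e a \'et\'e absorb\'ee en amont dans l'obtention de l'\idc (\ref{eqIGCram}) \`a partir des formules de Cramer (\ref{eqMPC1})--(\ref{eqGema}). Le seul point auquel il faut prendre garde est l'usage de la version \emph{exacte} (\ref{eqIGCram}) et non de la congruence (\ref{eqCGCram}) du Lemme \ref{lemCram} : c'est bien l'hypoth\`ese \gui{$A$ de rang $\leq k$}, qui annule $\cD_{k+1}(A)$, qui autorise ce passage. Une fois ce point acquis, la preuve se r\'eduit \`a la lin\'earit\'e du produit matriciel et \`a la manipulation standard des idempotents.
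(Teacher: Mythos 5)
Your proof is correct and takes essentially the same route as the paper: the paper introduces this proposition as an immediate consequence of the Cramer identity (\ref{eqIGCram}) and gives no further detail, and your argument --- distributing $A\,(\cdot)\,A$ over the sum defining $B$, applying (\ref{eqIGCram}) termwise under the rank $\leq k$ hypothesis, then the standard manipulations showing $P=AB$ is idempotent with $\Im\,P=\Im\,A$ and $\A^m=\Im\,P\oplus\Ker\,P$ --- is precisely that consequence written out in full. Your remark that one must use the exact identity (\ref{eqIGCram}) rather than the congruence (\ref{eqCGCram}) is exactly the point the paper's hypothesis is there to guarantee.
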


L'\idt suivante est encore plus miraculeuse (voir \cite{Bha} \thoz~5.5).
\begin{proposition} 
\label{propIGCram2} \emph{(Prasad et Robinson)}
Avec  les  hypoth\`eses et les notations de la proposition
pr\'ec\'edente, si $\forall \alpha,\alpha'\in \cP_{k,m},$ $\forall\beta,\beta '\in 
\cP_{k,n}$ $\;c_{\alpha,\beta}\,c_{\alpha',\beta'}= 
c_{\alpha,\beta'}\,c_{\alpha',\beta}$,
alors 
\begin{equation} \label{eqIGCramPraRo}
B\, A\, B=B\,.
\end{equation}
\end{proposition}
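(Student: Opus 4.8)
The plan is to expand $BAB$ as a quadruple sum and reduce everything to a single pointwise identity, after which the symmetry hypothesis on the $c_{\alpha,\beta}$ will make the sum factor. With $B=\sum_{\alpha,\beta}c_{\alpha,\beta}\Adj_{\alpha,\beta}(A)$, where $\alpha,\alpha'$ range over $\cP_{k,m}$ and $\beta,\beta'$ over $\cP_{k,n}$, one has
\[
BAB=\sum_{\alpha,\alpha',\beta,\beta'}c_{\alpha,\beta}\,c_{\alpha',\beta'}\;\Adj_{\alpha,\beta}(A)\,A\,\Adj_{\alpha',\beta'}(A),
\]
so the whole argument rests on understanding the ``sandwich'' $\Adj_{\alpha,\beta}(A)\,A\,\Adj_{\alpha',\beta'}(A)$.

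The key claim I would isolate and prove first is the pointwise identity, valid for every $A$ of rang $\leq k$ and all admissible multi-indices (observe how the column indices $\beta,\beta'$ get swapped between the two factors on the right):
\[
\Adj_{\alpha,\beta}(A)\,A\,\Adj_{\alpha',\beta'}(A)=\mu_{\alpha,\beta'}\;\Adj_{\alpha',\beta}(A).
\]
Since $(\I_m)_{\alpha,1..m}\,A\,(\I_n)_{1..n,\beta'}=A_{\alpha,\beta'}$, the left-hand side equals $(\I_n)_{1..n,\beta}\,\big[\Adj(A_{\alpha,\beta})\,A_{\alpha,\beta'}\,\Adj(A_{\alpha',\beta'})\big]\,(\I_m)_{\alpha',1..m}$, so it suffices to prove the ``core'' identity $\Adj(A_{\alpha,\beta})\,A_{\alpha,\beta'}\,\Adj(A_{\alpha',\beta'})=\mu_{\alpha,\beta'}\,\Adj(A_{\alpha',\beta})$ for the $k\times k$ minors. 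This is immediate once $A$ admits a factorization $A=\Phi\Psi$ with $\Phi\in\A^{m\times k}$, $\Psi\in\A^{k\times n}$: then $A_{\alpha,\beta}=\Phi_\alpha\Psi_\beta$, and using $\Adj(\Phi_\alpha\Psi_\beta)=\Adj(\Psi_\beta)\Adj(\Phi_\alpha)$ together with $\Adj(\Phi_\alpha)\Phi_\alpha=\det(\Phi_\alpha)\,\I_k$ and $\Psi_{\beta'}\Adj(\Psi_{\beta'})=\det(\Psi_{\beta'})\,\I_k$, the core collapses to $\det(\Phi_\alpha)\det(\Psi_{\beta'})\,\Adj(\Phi_{\alpha'}\Psi_\beta)=\mu_{\alpha,\beta'}\,\Adj(A_{\alpha',\beta})$. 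Every step here is a polynomial identity over an arbitrary commutative ring, so no invertibility is needed once the factorization is in hand.

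The main obstacle is precisely that a matrix of rang $\leq k$ over an arbitrary $\A$ need not factor through $\A^k$ globally. I would remove this by a universality argument: over the generic ring $\ZZ[(x_{ij})]/\cD_{k+1}$, which is a domain since generic \idds determinantiels are prime, one may invert a single nonzero $k$-minor and then factor the localized generic matrix as $\Phi\Psi$ by elimination; the core identity therefore holds over that localized domain, hence in the domain itself (the localization is injective), whence the difference of the two sides is a universal congruence modulo $\cD_{k+1}(A)$. Specializing and using $\cD_{k+1}(A)=0$ for $A$ of rang $\leq k$ gives the sandwich identity exactly. (Alternatively, one invokes density of the rank-exactly-$k$ locus in the determinantal variety over an infinite field, then the same congruence.)

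Finally I would assemble the pieces. Substituting the sandwich identity into the quadruple sum yields $BAB=\sum_{\alpha,\alpha',\beta,\beta'}c_{\alpha,\beta}\,c_{\alpha',\beta'}\,\mu_{\alpha,\beta'}\,\Adj_{\alpha',\beta}(A)$. I now invoke the hypothesis $c_{\alpha,\beta}\,c_{\alpha',\beta'}=c_{\alpha,\beta'}\,c_{\alpha',\beta}$ to replace the coefficient; since the factors $c_{\alpha,\beta'}\mu_{\alpha,\beta'}$ and $c_{\alpha',\beta}\Adj_{\alpha',\beta}(A)$ depend on disjoint, independently ranging index pairs, the sum factors:
\[
BAB=\Big(\sum\nolimits_{\alpha,\beta'}c_{\alpha,\beta'}\mu_{\alpha,\beta'}\Big)\Big(\sum\nolimits_{\alpha',\beta}c_{\alpha',\beta}\Adj_{\alpha',\beta}(A)\Big)=1\cdot B=B,
\]
using $\sum c_{\alpha,\beta}\mu_{\alpha,\beta}=1$ and the definition of $B$. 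This is exactly the desired equality (\ref{eqIGCramPraRo}).
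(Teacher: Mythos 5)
Your proof is correct, but there is nothing in the paper to compare it with: the authors state this proposition as a known result of Prasad and Robinson and refer to \cite{Bha}, th\'eor\`eme 5.5, for its proof. Your argument is therefore a genuine, self-contained addition. Its key ingredient, absent from the paper, is the \emph{sandwich identity} $\Adj_{\alpha,\beta}(A)\,A\,\Adj_{\alpha',\beta'}(A)=\mu_{\alpha,\beta'}\,\Adj_{\alpha',\beta}(A)$ for $A$ of rank $\leq k$; granted this, your quadruple-sum expansion, the substitution $c_{\alpha,\beta}c_{\alpha',\beta'}=c_{\alpha,\beta'}c_{\alpha',\beta}$ and the factorization into $\bigl(\sum c_{\alpha,\beta'}\mu_{\alpha,\beta'}\bigr)\bigl(\sum c_{\alpha',\beta}\Adj_{\alpha',\beta}(A)\bigr)=1\cdot B$ are exactly right, and this is the only place the symmetry hypothesis is used. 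Your verification of the core identity under a factorization $A=\Phi\Psi$ through $\A^k$ is a clean polynomial computation ($\Adj(UV)=\Adj(V)\Adj(U)$, $\Adj(\Phi_\alpha)\Phi_\alpha=\det(\Phi_\alpha)\I_k$, etc.), and your main genericity route is sound: the determinantal ring $\ZZ[(x_{ij})]/\cD_{k+1}$ is indeed a domain, a nonzero $k$-minor can be inverted, the lemme \ref{lemLib} gives the factorization there, and injectivity of the localization pulls the identity back, yielding a universal congruence modulo $\cD_{k+1}$ that specializes to any $A$ of rank $\leq k$ over any $\A$. Two caveats are worth recording. First, primality of generic determinantal ideals over $\ZZ$ is the Hochster--Eagon theorem, a decidedly non-elementary input which sits oddly with the constructive spirit of this paper, where the analogous congruence (\ref{eqCGCram}) of lemme \ref{lemCram} is obtained by direct Cramer manipulations; a proof of the sandwich congruence by such manipulations (essentially what \cite{Bha} does) would match the paper's style better. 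Second, your parenthetical alternative (density of the rank-exactly-$k$ locus plus Nullstellensatz) is weaker than you suggest: it only places the difference of the two sides in the \emph{radical} of $\cD_{k+1}$, so after specialization the sandwich identity would hold only modulo nilpotents, proving the proposition for reduced rings only; to repair it you would again need radicality (in fact primality) of the generic determinantal ideal together with a descent from $\QQ$ to $\ZZ$, so it is not a lighter substitute for the main route.
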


\subsection{Applications lin\'eaires \nls et lemme de la libert\'e}  
Nous ne savons pas s'il existe une terminologie officielle pour la notion 
suivante. 
\begin{definition} 
\label{defAliNl} 
Une \ali $\varphi:E\rightarrow F$   entre deux \Amos libres de dimensions finies 
est dite \emph{\nl (de rang $k$)} si, pour des bases convenables $(e_1,\ldots 
,e_n)$ et $(f_1,\ldots ,f_m)$ de $E$ et $F$ on a: $\varphi(e_i)=f_i$ si $i\leq k$ 
et $\varphi(e_i)=0$ si $i> k.$ 
\end{definition}
Il revient au m\^eme de dire que $\Ker\,\varphi$ et $\Im\,\varphi$ sont libres et 
admettent des suppl\'ementaires libres. 
Ou encore que la matrice de $\varphi$ sur des bases arbitraires de $E$ et $F$ 
s'\'ecrit  $A=U\,\I_{k,m,n}\, V$  avec $U$ et $V$ inversibles, et 
$\I_{k,m,n}\in\A^{m{\times}n}$ est de la forme $\cmatrix{\I_k&0\cr 0& 0}.$

Si on pose $B=V^{-1}\,\I_{k,n,m}\, U^{-1}$ on a imm\'ediatement
$$A\, B\, A=A \quad  \mathrm{et} \quad  B\, A\, B=B\,.$$

L'\ali $x\mapsto ax$ de $\A$ dans $\A$ est \nl \ssi $a$ est nul ou inversible.

Le rang d'une \ali \nl est bien d\'efini d\`es que l'anneau n'est pas trivial. 
Avec l'anneau trivial par contre, toutes les \alis sont \nlsz, de tous rangs 
(cette remarque est n\'ecessaire pour admettre sans r\'eticence le lemme 
\ref{lemLib} ainsi que le point
\ref{h} du \tho \ref{theoremIFD}).

Le lemme suivant (voir \cite{LQ}) est imm\'ediat.
\begin{lemma} 
\label{lemLib} \emph{(lemme de la libert\'e)}
Soit $\varphi:E\rightarrow F$  une \ali de rang $\leq k$ entre deux \Amos libres 
de dimensions finies. Soit $A$ une matrice repr\'esentant $\varphi$ sur des bases 
de $E$ et $F$. Soit $\mu$ un mineur d'ordre $k$ 
de $A$. Si $\mu$ est inversible,  $\varphi$ est \nl de rang $k$. En particulier 
$\varphi$ est toujours \nl de rang $k$ sur l'anneau $\A[1/\mu]$.  
\end{lemma}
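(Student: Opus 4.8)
By the characterization recalled just after Definition~\ref{defAliNl}, proving that $\varphi$ is simple of rank $k$ amounts to producing invertible matrices $U,V$ with $A = U\,\I_{k,m,n}\,V$. Since changing the bases of $E$ and $F$ multiplies $A$ on the left and on the right by invertible matrices and does not affect simplicity, I would first permute rows and columns so that the invertible minor $\mu$ occupies the upper-left $k\times k$ corner. Writing $A$ in blocks
$$A=\cmatrix{A_1 & A_2 \cr A_3 & A_4},$$
with $A_1\in\A^{k\times k}$ and $\det A_1=\pm\mu$, the invertibility of $\mu$ makes $A_1$ invertible over $\A$, its inverse being $(\det A_1)^{-1}\Adj(A_1)$.

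Next I would clear the two off-diagonal blocks by block Gaussian elimination: left-multiplying by $\cmatrix{A_1^{-1} & 0 \cr -A_3A_1^{-1} & \I}$ and right-multiplying by $\cmatrix{\I & -A_1^{-1}A_2 \cr 0 & \I}$ (with identity blocks of the appropriate sizes, both matrices invertible and hence admissible changes of bases) transforms $A$ into the block-diagonal matrix
$$\cmatrix{\I_k & 0 \cr 0 & S},\qquad S=A_4-A_3A_1^{-1}A_2,$$
the Schur complement. Consequently $\varphi$ is simple of rank $k$ exactly when $S=0$, for then the reduced matrix is precisely $\I_{k,m,n}$ and, undoing the two multiplications, $A=U\,\I_{k,m,n}\,V$.

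The only substantive point is the vanishing of $S$, and this is where the rank hypothesis is used. The elimination matrices are invertible, so the whole reduction preserves the determinantal ideals (Definition~\ref{defIdDet} notes these depend only on $\varphi$); hence $\cD_{k+1}$ of the reduced matrix still vanishes, as it did for $A$. But the $(k+1)$-minor of $\cmatrix{\I_k & 0 \cr 0 & S}$ extracted on rows $\{1,\ldots,k,k+i\}$ and columns $\{1,\ldots,k,k+j\}$ equals the $(i,j)$ entry of $S$. So every entry of $S$ is zero, $S=0$, and $\varphi$ is simple of rank $k$.

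For the last assertion I would simply pass to $\A[1/\mu]$: there $\mu$ is invertible while the rank hypothesis survives (the order-$(k+1)$ minors, already zero in $\A$, stay zero after localization), so the first part applies verbatim and yields simplicity over $\A[1/\mu]$. The argument is entirely elementary block manipulation; the one step that genuinely needs the hypotheses is the transfer of the condition $\cD_{k+1}=0$ through the invertible reduction, which kills the Schur complement.
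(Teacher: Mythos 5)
Your proof is correct and complete. The paper itself gives no argument for this lemma (it is declared \emph{imm\'ediat}, with a pointer to \cite{LQ}), and your block-elimination argument is exactly the standard one: permuting the invertible minor into the corner, clearing the off-diagonal blocks by invertible row and column operations, and then using the invariance of the determinantal ideals (together with $\cD_{k+1}(A)=0$) to conclude that each entry of the Schur complement $S=A_4-A_3A_1^{-1}A_2$ appears as an order-$(k+1)$ minor of the reduced matrix and hence vanishes, giving $A=U\,\I_{k,m,n}\,V$.
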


\subsection{Syst\`emes fondamentaux d'\idms \ortsz}  
Un syst\`eme fondamental d'\idms \orts (sfio) est une famille finie
$(r_i)_{1\leq i\leq n}$ qui v\'erifie $r_ir_j=0$ si $i\neq j$ et 
$\sum_{i=1}^nr_i=1$.
Il revient au m\^{e}me de se donner un tel syst\`eme dans $\A$ ou de se donner un 
\iso $\A\rightarrow \A_1\times \cdots \times \A_n$. 
L'\idm $r_i$ dans $\A$ correspond alors au \gui{vecteur} 
$(0,\ldots ,0,1,0,\ldots 0)$ avec $1$ en position $i$ dans 
$\A_1\times \cdots \times \A_n$.
 Chaque 
$\A_i$ est isomorphe \`a $\A[1/r_i]\simeq \aqo{\A}{1-r_i}$, ou encore
\`a l'\id $r_i\A$ qu'on consid\`ere comme un anneau unitaire en prenant $r_i$ 
comme \elt neutre pour la multiplication (attention, ce n'est pas un sous anneau 
de $\A$, parce que le neutre n'est pas le m\^{e}me).

Dans nos \'enonc\'es, nous ne supposerons pas que tous les $r_i$ dans un sfio sont 
non nuls. Cela nous simplifie la vie (et les \'enonc\'es) notamment lorsqu'on n'a 
pas de test d'\egt \`a $0$ dans l'anneau consid\'er\'e.  Il faut simplement se 
rappeler que l'anneau $\A[1/0]$ est trivial pour comprendre pourquoi les 
\'enonc\'es restent justes. 

Une \gnn naturelle de la notion d'\ali \nl lorsque l'anneau poss\`ede des \idms 
est la suivante.

\begin{definition} 
\label{defAliQnl} 
Une \ali $\varphi:E\rightarrow F$   entre deux \Amos libres de dimensions finies 
est dite \emph{\qnlz} si, pour des bases convenables $(e_1,\ldots ,e_n)$ et 
$(f_1,\ldots ,f_m)$ de $E$ et $F$ on a: $\varphi(e_i)=r_if_i$  $(1\leq i\leq 
\inf(m,n))$ o\`u les $r_i$ sont des \idms v\'erifiant $r_ir_{i+1}=r_{i+1}$ $(1\leq 
i< \inf(m,n))$, et si
et $\varphi(e_i)=0$ pour $i> \inf(m,n)$. \end{definition}

Posons $r_0=1$, $r_{\inf(m,n)+1}=0$ et 
$s_i=r_i-r_{i+1}$ ($0\leq i\leq \inf(m,n)$). Alors les $s_i$ forment un sfio et 
$\varphi$ devient \nl de rang $k$ lorsqu'on \'etend les scalaires \`a l'anneau 
$\A[1/s_k]$.

R\'eciproquement il est facile de voir qu'une \ali qui devient simple
chaque fois qu'on localise en les \elts d'un sfio est \qnlz.

\subsection{Inverses g\'en\'eralis\'es et \alis \crosz}  
\label{subsecIGALC}

Dans les sections suivantes, nous donnerons plusieurs \gnns du r\'esultat des 
propositions \ref{propIGCram} et \ref{propIGCram2}, qui nous donnent notre premier 
\gui{\ingz}.
La terminologie concernant les \ings ne semble pas enti\`erement fix\'ee. Nous 
adoptons celle de \cite{Lan}. Dans \cite{Bha} l'auteur utilise le terme 
\gui{reflexive g-inverse}:

\begin{definition} 
\label{defIng} 
Soient $E$ et $F$ deux \Amosz,  et une \ali $\varphi:E\rightarrow F$. Une \ali 
$\psi :F\rightarrow E$ est appel\'ee un \emph{\ingz} de $\varphi$ si on a $\varphi 
\circ\psi \circ\varphi =\varphi$ et $\psi \circ\varphi \circ\psi =\psi.$ 
\end{definition}

Dans ces conditions, on v\'erifie que $\varphi\, \psi$ et $\psi\, \varphi$ sont 
des \prnsz, que $\Im\,\varphi =\Im\,\varphi\, \psi$,  $\Im\,\psi  =\Im\,\psi\, 
\varphi$,  $\Ker\,\varphi =\Ker\,\psi \,\varphi$, $\Ker\,\psi  
=\Ker\,\varphi\,\psi$, et donc $E=\Ker\,\varphi \oplus \Im\,\psi$ et 
 $F=\Ker\,\psi  \oplus \Im\,\varphi$.

Si on a une \ali $\psi_1$ v\'erifiant $\varphi\, \psi_1\,\varphi =\varphi$ 
on obtient un
\ing en posant $\psi=\psi_1\,\varphi \,\psi_1$.

Le lemme suivant  d\'ecrit les \ings d'une \ali \nlz.

\begin{lemma} 
\label{lemIngNl} 
Soit $E$ et $F$ des modules libres de dimensions finies et $\varphi:E\rightarrow 
F$ une \ali \nl dont la matrice
sur des bases fix\'ees est $A=U\,\I_{r,m,n}\, V$ ($U$ et $V$ sont inversibles, cf. 
d\'efinition \ref{defAliNl}). Alors les \ings de $\varphi$ sont toutes les \alis 
ayant (sur les m\^{e}mes bases) une matrice $B\in\A^{n{\times}m}$ de la forme 
suivante (avec $C\in\A^{r{\times}(m-r)}$ et 
$D\in\A^{(n-r){\times}r}$):
$$ B= V^{-1}\, \cmatrix{\I_r&C\cr D& DC} \, U^{-1}
$$
\end{lemma}

%

Si $\psi$ est un \ing de $\varphi$, alors $\varphi$ et $\psi$ sont \crosz.

R\'eciproquement, la connaissance d'une \ali \cro avec $\varphi$ permet de 
calculer un \ing de $\varphi$.
En effet si $\varphi \bul$ est \cro avec $\varphi$,  $\varphi$ 
se restreint en un \iso $\varphi_0$ de 
$\Im\,\varphi\bul$ sur $\Im\,\varphi$ et 
$\varphi\bul$ se restreint en un \iso 
$\varphi_0\bul$ de $\Im\,\varphi$ sur 
$\Im\,\varphi\bul$. On~a:
\begin{equation} \label{SDO2}
\begin{array}{rclrrcl}
\Im\,\varphi  & =  &\Im\,\varphi\varphi\bul\,,   & \quad&
\Ker\,\varphi\bul  & = & \Ker\,\varphi\varphi\bul\,, 
\\[1mm]
\Ker\,\varphi   & =  & \Ker\,\varphi\bul\varphi\,,  &
&\Im\,\varphi\bul & = & \Im\,\varphi\bul\varphi\,.
\end{array}
\end{equation}

Notons $\pi_{\Im\,\varphi}:F\rightarrow F$ la \prn sur $\Im\,\varphi$ \paralm \`a 
$\Ker\,\varphi\bul$. On d\'efinit l'\ali $\psi:F\rightarrow E$ par
\begin{equation} \label{eqIg1}
\forall y \in F\qquad  
\psi (y)= \varphi_0^{-1}(\pi_{\Im\,\varphi}(y)).
\end{equation}

Il est alors clair que $\psi$ convient comme \gui{\ing de $\varphi$ via 
$\varphi\bul$} au sens du \tho suivant. 
\begin{thdef} 
\label{thCrIvg} 
Si $\varphi:E\to F$ et $\varphi\bul:F\to E$ sont \cros 
il existe une
unique \ali $\psi :F\rightarrow E$ v\'erifiant les deux conditions:
\begin{enumerate}
\item  $\varphi \circ \psi$  est la \prn sur $\Im\,\varphi$ \paralm
\`a $\Ker\,\varphi\bul$;
\item $\psi  \circ \varphi $  est la \prn sur $\Im\,\varphi\bul$ \paralm \`a 
$\Ker\,\varphi.$
\end{enumerate}
Cette \ali $\psi$ peut \^{e}tre aussi caract\'eris\'ee par les 4 \egts suivantes:
\begin{equation} \label{eqIg1.1}
\varphi \circ \psi\circ \varphi =\varphi,\qquad  
\psi \circ \varphi \circ \psi =\psi,\qquad
\varphi\bul \circ \varphi \circ \psi  =\varphi\bul,\qquad
 \psi\circ  \varphi \circ \varphi\bul =\varphi\bul\,.
\end{equation}
Nous dirons que \emph{$\psi$ est l'\ing de $\varphi$ via $\varphi\bul$} et nous le 
noterons 
$\psi=\Ig(\varphi,\varphi\bul)=\varphi^{\dag_{\varphi\bul}}$.
\end{thdef}
\begin{proof}{Preuve}
Il nous reste  \`a voir que les quatre \egts suffisent.\\
Puisque $\varphi\, \psi\,\varphi =\varphi$ et $ \psi\,\varphi\,\psi  =\psi $ on a 
$E=\Ker\,\varphi \oplus\Im\,\psi $, $F=\Ker\,\psi  \oplus\Im\,\varphi$, $\varphi\, 
\psi$  est la \prn sur $\Im\,\varphi$ \paralm \`a $\Ker\,\psi $ et $\psi  \, 
\varphi $  est la \prn sur $\Im\,\psi $ \paralm \`a $\Ker\,\varphi.$
  Il nous suffit donc de montrer que $\Ker\,\varphi \bul=\Ker\,\psi$ et
$\Im\,\varphi \bul=\Im\,\psi$.\\
La troisi\`eme \egt implique  $\Ker\,\psi\subset \Ker\,\varphi \bul$. On conclut  
$\Ker\,\varphi \bul=\Ker\,\psi$ en remarquant que
$F=\Ker\,\psi  \oplus\Im\,\varphi= \Ker\,\varphi\bul\oplus\Im\,\varphi$.\\
De m\^{e}me la derni\`ere \egt implique $\Im\,\varphi \bul \subset\Im\,\psi$ et on 
conclut de la m\^{e}me fa\c{c}on.
\end{proof}

Le \tho pr\'ec\'edent correspond \`a la \dfn donn\'ee par Moore, dans le cas 
d'\evcs hermitiens, avec pour $\varphi \bul$ la conjugu\'ee $\varphi^{*},$ ce qui 
donne des \prns \ortes et l'\iMPz.

\begin{lemma} 
\label{lemCroCro} 
Si $\varphi$ et $\varphi\bul$ sont crois\'ees, alors $\varphi \varphi \bul$ est 
crois\'ee avec elle-m\^{e}me (m\^{e}me chose pour $\varphi \bul \varphi$). En 
outre si $\theta=\Ig(\varphi \varphi \bul,\varphi \varphi \bul)$ alors 
$\varphi\varphi\bul \theta = \theta\varphi\varphi\bul$, 
$\theta=\Ig(\varphi\bul,\varphi)\,\Ig(\varphi ,\varphi\bul ),$ $\varphi\bul 
\theta=\Ig(\varphi ,\varphi\bul)$  et
$\theta\varphi=\Ig(\varphi\bul,\varphi)$.
\end{lemma}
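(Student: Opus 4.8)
The plan is to reduce everything to the structural relations (\ref{SDO2}) and to the uniqueness statement in Theorem~\ref{thCrIvg}. First, $\varphi\varphi\bul$ is crossed with itself: as an endomorphism of $F$ this amounts to $\Im\,\varphi\varphi\bul \oplus \Ker\,\varphi\varphi\bul = F$, and (\ref{SDO2}) identifies these spaces with $\Im\,\varphi$ and $\Ker\,\varphi\bul$, so the condition is exactly the first decomposition of (\ref{eqSDO}). The case of $\varphi\bul\varphi$ is symmetric, using the second line of (\ref{SDO2}) and the second decomposition of (\ref{eqSDO}). In particular $\theta=\Ig(\varphi\varphi\bul,\varphi\varphi\bul)$ is well defined.

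Write $\alpha=\varphi\varphi\bul$, $\psi=\Ig(\varphi,\varphi\bul)$ and $\psi\bul=\Ig(\varphi\bul,\varphi)$. Applying Theorem~\ref{thCrIvg} to $\alpha$ crossed with itself, both $\alpha\theta$ and $\theta\alpha$ equal the projection onto $\Im\,\alpha$ parallel to $\Ker\,\alpha$ (the two conditions of the theorem coincide, since the companion map is $\alpha$ itself, so that $\Im\,\varphi\bul$ and $\Ker\,\varphi$ are both replaced by $\Im\,\alpha$ and $\Ker\,\alpha$). This gives the commutation $\varphi\varphi\bul\,\theta=\theta\,\varphi\varphi\bul$ immediately. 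Because $\Im\,\alpha=\Im\,\varphi$ and $\Ker\,\alpha=\Ker\,\varphi\bul$, this common projection is $\pi:=\varphi\psi$, which by Theorem~\ref{thCrIvg} also equals $\psi\bul\varphi\bul$; symmetrically I would name the companion projection $\pi':=\psi\varphi=\varphi\bul\psi\bul$ onto $\Im\,\varphi\bul$ parallel to $\Ker\,\varphi$.

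Next I would prove the factorization $\theta=\psi\bul\psi$ by showing that $\psi\bul\psi$ satisfies the two defining projections of $\Ig(\alpha,\alpha)$ and invoking uniqueness. The third equality of (\ref{eqIg1.1}) written for $\psi\bul$, namely $\varphi\varphi\bul\psi\bul=\varphi$, gives $\alpha\,(\psi\bul\psi)=\varphi\psi=\pi$; the fourth equality of (\ref{eqIg1.1}) for $\psi$, namely $\psi\varphi\varphi\bul=\varphi\bul$, gives $(\psi\bul\psi)\,\alpha=\psi\bul\varphi\bul=\pi$. Both composites are the projection onto $\Im\,\alpha$ parallel to $\Ker\,\alpha$, so uniqueness in Theorem~\ref{thCrIvg} forces $\psi\bul\psi=\theta$.

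Finally, the identities $\varphi\bul\theta=\psi$ and $\theta\varphi=\psi\bul$ follow once more from uniqueness. For $\varphi\bul\theta=\psi$, one side is $\varphi(\varphi\bul\theta)=\alpha\theta=\pi$, the projection onto $\Im\,\varphi$ parallel to $\Ker\,\varphi\bul$ (condition 1 for $\psi$); the other is $(\varphi\bul\theta)\varphi=\varphi\bul\psi\bul\psi\varphi=\pi'\pi'=\pi'$ by idempotence, which is condition 2 for $\psi$. The identity $\theta\varphi=\psi\bul$ is the mirror image, with $\varphi\bul(\theta\varphi)=\pi'$ and $(\theta\varphi)\varphi\bul=\theta\alpha=\pi$. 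I expect the only real difficulty to be bookkeeping: one must consistently determine which of $\Im\,\varphi,\Ker\,\varphi\bul,\Im\,\varphi\bul,\Ker\,\varphi$ is the image and the kernel of each composite, and use the two recurring equalities $\varphi\psi=\psi\bul\varphi\bul=\pi$ and $\psi\varphi=\varphi\bul\psi\bul=\pi'$ so that the idempotences $\pi^2=\pi$ and $\pi'^2=\pi'$ collapse the products. Once $\pi$ and $\pi'$ are named, each assertion is a one-line consequence of (\ref{SDO2}), (\ref{eqIg1.1}) and Theorem~\ref{thCrIvg}.
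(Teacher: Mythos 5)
The paper states Lemma \ref{lemCroCro} with no proof at all, so there is no official argument to compare against; your proof is correct, and it rests on exactly the tools the paper sets up immediately beforehand (the identities (\ref{SDO2}) and the uniqueness statement of Theorem \ref{thCrIvg}), so it is surely the intended argument. Each verification is sound: $\Im\,\varphi\varphi\bul=\Im\,\varphi$ and $\Ker\,\varphi\varphi\bul=\Ker\,\varphi\bul$ give the self-crossing; the two defining conditions of $\Ig(\varphi\varphi\bul,\varphi\varphi\bul)$ coincide, giving the commutation; and the equalities $\varphi\varphi\bul\psi\bul=\varphi$, $\psi\varphi\varphi\bul=\varphi\bul$ together with $\pi=\varphi\psi=\psi\bul\varphi\bul$, $\pi'=\psi\varphi=\varphi\bul\psi\bul$ reduce the three remaining identities to uniqueness. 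One remark on references: the equality $\varphi\circ\psi=\psi\bul\circ\varphi\bul$ that you attribute in passing to (\ref{eqCrIvg2}) belongs to Proposition \ref{thCrIvg2}, which in the paper comes \emph{after} the lemma; but, as you yourself note, it follows directly from Theorem \ref{thCrIvg} applied to the two pairs $(\varphi,\varphi\bul)$ and $(\varphi\bul,\varphi)$ --- both composites are the projection onto $\Im\,\varphi$ parallel to $\Ker\,\varphi\bul$ --- so no forward reference (and hence no circularity) is actually involved.
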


On a une caract\'erisation purement \'equationnelle de la situation du \tho 
\ref{thCrIvg}, \`a condition d'introduire les deux \ingsz.
\begin{proposition} 
\label{thCrIvg2} 
Soient $E$ et $F$ deux \Amos et deux \alis $\varphi:E\rightarrow F$ et 
$\varphi\bul:F\rightarrow E$.
\begin{enumerate}
\item Si $\varphi$ et $\varphi\bul$ sont \crosz, posons 
$\psi=\Ig(\varphi,\varphi\bul)$ et $\psi\bul=\Ig(\varphi\bul,\varphi)$. On a:
\begin{equation} \label{eqCrIvg2}
\begin{array}{rclrclrcl}
\varphi \circ \psi\circ \varphi&=&\varphi 
&\qquad \varphi\bul \circ \psi\bul\circ\varphi\bul&=\varphi\bul
&\qquad \varphi \circ\psi &=&\psi \bul\circ\varphi\bul\\[1mm]
\psi \circ \varphi \circ \psi&=&\psi 
&\qquad\psi\bul \circ \varphi\bul \circ \psi\bul&=\psi\bul
&\qquad\psi \circ\varphi  &=&\varphi\bul \circ\psi \bul
\end{array}
\end{equation}
\item R\'eciproquement si $\psi$ et $\psi \bul$ v\'erifient les \'egalit\'es 
(\ref{eqCrIvg2}), alors $\varphi$ et $\varphi\bul$ sont \crosz, 
$\psi=\Ig(\varphi,\varphi\bul)$  et $\psi\bul=\Ig(\varphi\bul,\varphi)$.
\end{enumerate}
\end{proposition}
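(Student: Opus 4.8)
The plan is to treat the two implications separately, with Theorem~\ref{thCrIvg} as the main tool. Throughout I use that the crossing relation is symmetric: the pair of conditions $\Im\,\varphi\oplus\Ker\,\varphi\bul=F$ and $\Ker\,\varphi\oplus\Im\,\varphi\bul=E$ is unchanged when $\varphi$ and $\varphi\bul$ are interchanged, so under the hypothesis of~(1) both $\psi=\Ig(\varphi,\varphi\bul)$ and $\psi\bul=\Ig(\varphi\bul,\varphi)$ are defined.

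For the first part, the four equalities filling the two left-hand columns of~(\ref{eqCrIvg2}), namely $\varphi\circ\psi\circ\varphi=\varphi$, $\psi\circ\varphi\circ\psi=\psi$, $\varphi\bul\circ\psi\bul\circ\varphi\bul=\varphi\bul$ and $\psi\bul\circ\varphi\bul\circ\psi\bul=\psi\bul$, are immediate from the defining relations~(\ref{eqIg1.1}) of $\Ig$ applied to $\psi$ and to $\psi\bul$. The content lies in the right-hand column. For $\varphi\circ\psi=\psi\bul\circ\varphi\bul$ I would apply Theorem~\ref{thCrIvg} to $\psi$ to recognize $\varphi\circ\psi$ as the projection onto $\Im\,\varphi$ parallel to $\Ker\,\varphi\bul$, then apply the same theorem to $\psi\bul$ (roles of $\varphi$ and $\varphi\bul$ swapped) to recognize $\psi\bul\circ\varphi\bul$ as the projection onto that same image parallel to that same kernel; two projections with equal image and equal kernel coincide. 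The relation $\psi\circ\varphi=\varphi\bul\circ\psi\bul$ follows symmetrically, both sides being the projection onto $\Im\,\varphi\bul$ parallel to $\Ker\,\varphi$.

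For the converse, I would first read off from $\varphi\circ\psi\circ\varphi=\varphi$ and $\psi\circ\varphi\circ\psi=\psi$ that $\psi$ is a generalized inverse of $\varphi$ in the sense of Definition~\ref{defIng}; the remarks following that definition then supply $E=\Ker\,\varphi\oplus\Im\,\psi$, $F=\Ker\,\psi\oplus\Im\,\varphi$, together with the facts that $\varphi\psi$ is the projection of image $\Im\,\varphi$ and kernel $\Ker\,\psi$, while $\psi\varphi$ is the projection of image $\Im\,\psi$ and kernel $\Ker\,\varphi$. Symmetrically $\psi\bul$ is a generalized inverse of $\varphi\bul$, yielding $E=\Ker\,\psi\bul\oplus\Im\,\varphi\bul$, $F=\Ker\,\varphi\bul\oplus\Im\,\psi\bul$, and the analogous projection descriptions. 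The crucial step is to feed in the two cross-equations: since $\varphi\psi=\psi\bul\varphi\bul$ equates two projections of $F$, they share image and kernel, giving $\Im\,\varphi=\Im\,\psi\bul$ and $\Ker\,\psi=\Ker\,\varphi\bul$; likewise $\psi\varphi=\varphi\bul\psi\bul$ gives $\Im\,\psi=\Im\,\varphi\bul$ and $\Ker\,\varphi=\Ker\,\psi\bul$.

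These four identifications are exactly what upgrades the generalized-inverse decompositions into the crossing conditions: substituting $\Ker\,\psi=\Ker\,\varphi\bul$ into $F=\Ker\,\psi\oplus\Im\,\varphi$ gives $F=\Ker\,\varphi\bul\oplus\Im\,\varphi$, and substituting $\Im\,\psi=\Im\,\varphi\bul$ into $E=\Ker\,\varphi\oplus\Im\,\psi$ gives $E=\Ker\,\varphi\oplus\Im\,\varphi\bul$, so $\varphi$ and $\varphi\bul$ are crossed. To finish with $\psi=\Ig(\varphi,\varphi\bul)$ I would verify the two projection conditions of Theorem~\ref{thCrIvg}: $\varphi\psi$ has image $\Im\,\varphi$ and kernel $\Ker\,\psi=\Ker\,\varphi\bul$, hence is the projection onto $\Im\,\varphi$ parallel to $\Ker\,\varphi\bul$; and $\psi\varphi$ has image $\Im\,\psi=\Im\,\varphi\bul$ and kernel $\Ker\,\varphi$, hence is the projection onto $\Im\,\varphi\bul$ parallel to $\Ker\,\varphi$. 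Uniqueness in Theorem~\ref{thCrIvg} then yields $\psi=\Ig(\varphi,\varphi\bul)$, and the symmetric argument yields $\psi\bul=\Ig(\varphi\bul,\varphi)$. The only genuinely delicate point is to notice that the four equalities $\varphi\psi\varphi=\varphi$, $\psi\varphi\psi=\psi$, $\varphi\bul\psi\bul\varphi\bul=\varphi\bul$, $\psi\bul\varphi\bul\psi\bul=\psi\bul$ by themselves only produce the $\Im\,\psi$ and $\Ker\,\psi$ decompositions, and that the two cross-equations $\varphi\psi=\psi\bul\varphi\bul$ and $\psi\varphi=\varphi\bul\psi\bul$ are precisely the bridge matching $\Im\,\psi,\Ker\,\psi$ with $\Im\,\varphi\bul,\Ker\,\varphi\bul$.
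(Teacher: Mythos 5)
Your proof is correct. The paper actually states Proposition \ref{thCrIvg2} without any proof, so there is nothing to compare against line by line; your argument supplies exactly the intended one, built from the two ingredients the paper provides: the remarks following Definition \ref{defIng} (a generalized inverse yields the decompositions $E=\Ker\,\varphi\oplus\Im\,\psi$, $F=\Ker\,\psi\oplus\Im\,\varphi$ with $\varphi\psi$, $\psi\varphi$ the associated projections) and the uniqueness/projection characterization of Theorem \ref{thCrIvg}. In particular you correctly identify the one non-trivial point, namely that the two cross-equations $\varphi\circ\psi=\psi\bul\circ\varphi\bul$ and $\psi\circ\varphi=\varphi\bul\circ\psi\bul$ are what force $\Ker\,\psi=\Ker\,\varphi\bul$ and $\Im\,\psi=\Im\,\varphi\bul$, upgrading the generalized-inverse decompositions to the crossing conditions; this mirrors the kernel/image-matching technique the paper itself uses in its proof of Theorem \ref{thCrIvg}.
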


Un cas particulier est le suivant:
\begin{proposition} 
\label{thCrIvg2bis} 
Soit $E$ un \Amo et une \ali $\varphi:E\rightarrow E$.
\begin{enumerate}
\item Si $\varphi$ est \cro avec elle-m\^{e}me, posons 
$\psi=\Ig(\varphi,\varphi)$. On a:
\begin{equation} \label{eqCrIvg2bis}
\begin{array}{rclrclrcl}
\varphi \circ \psi\circ \varphi&=&\varphi 
&\qquad \psi \circ \varphi \circ \psi&=&\psi
&\qquad \varphi \circ\psi &=&\psi \circ\varphi
\end{array}
\end{equation}
\item R\'eciproquement si $\psi$ v\'erife les \'egalit\'es (\ref{eqCrIvg2bis}), 
alors $\varphi$ est \cro avec elle-m\^{e}me et $\psi=\Ig(\varphi,\varphi)$.
\end{enumerate}
\end{proposition}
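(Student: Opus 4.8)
The plan is to obtain this statement as the special case $F=E$, $\varphi\bul=\varphi$ of Proposition~\ref{thCrIvg2}, which is available to me. First I would observe that the hypothesis ``$\varphi$ crossed with itself'' is exactly the hypothesis ``$\varphi$ and $\varphi\bul$ crossed'' of Proposition~\ref{thCrIvg2} once one sets $\varphi\bul=\varphi$ (so that $F=E$). Under this identification the two generalized inverses appearing in that proposition coincide: by the notation fixed after Théorème~\ref{thCrIvg}, $\psi=\Ig(\varphi,\varphi\bul)=\Ig(\varphi,\varphi)$ and $\psi\bul=\Ig(\varphi\bul,\varphi)=\Ig(\varphi,\varphi)$, whence $\psi\bul=\psi$ with no further argument needed.

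For item~1 I would substitute $\varphi\bul=\varphi$ and $\psi\bul=\psi$ into the six equalities~(\ref{eqCrIvg2}). The first and second columns become identical, each yielding $\varphi\circ\psi\circ\varphi=\varphi$ (top) and $\psi\circ\varphi\circ\psi=\psi$ (bottom); the two ``crossed'' equalities $\varphi\circ\psi=\psi\bul\circ\varphi\bul$ and $\psi\circ\varphi=\varphi\bul\circ\psi\bul$ both collapse to $\varphi\circ\psi=\psi\circ\varphi$. These are precisely the three relations~(\ref{eqCrIvg2bis}), so item~1 follows from item~1 of Proposition~\ref{thCrIvg2}.

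For item~2 I would run Proposition~\ref{thCrIvg2} backwards. Starting from a $\psi$ satisfying~(\ref{eqCrIvg2bis}), I set $\psi\bul:=\psi$ and check that the pair $(\psi,\psi\bul)$ satisfies the full system~(\ref{eqCrIvg2}) with $\varphi\bul=\varphi$: the four equalities in the first two columns all reduce to the two given relations $\varphi\circ\psi\circ\varphi=\varphi$ and $\psi\circ\varphi\circ\psi=\psi$, while the two equalities in the third column both reduce to the commutation $\varphi\circ\psi=\psi\circ\varphi$, again one of the hypotheses. Item~2 of Proposition~\ref{thCrIvg2} then yields that $\varphi$ and $\varphi\bul=\varphi$ are crossed, i.e. $\varphi$ is crossed with itself, and that $\psi=\Ig(\varphi,\varphi)$.

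I do not expect any genuine obstacle here: the content is entirely inherited from Proposition~\ref{thCrIvg2} and the uniqueness clause of Théorème~\ref{thCrIvg}. The only points demanding care are bookkeeping ones, namely confirming that the six-equation system of Proposition~\ref{thCrIvg2} really does collapse to the three-equation system~(\ref{eqCrIvg2bis}) under the substitution $\varphi\bul=\varphi,\ \psi\bul=\psi$, and, in the converse direction, checking that promoting a single $\psi$ to the pair $(\psi,\psi)$ is legitimate, i.e. that one map solving three equations does solve all six. Both become evident the moment the substitution is written out.
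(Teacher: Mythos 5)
Your proof is correct and matches the paper's intent exactly: the paper introduces Proposition~\ref{thCrIvg2bis} with the words \guig un cas particulier est le suivant\guid{} and gives no separate argument, the specialization $F=E$, $\varphi\bul=\varphi$, $\psi\bul=\psi$ in Proposition~\ref{thCrIvg2} being the whole proof. Your bookkeeping (collapse of the six equalities of~(\ref{eqCrIvg2}) to the three of~(\ref{eqCrIvg2bis}), and the converse promotion of $\psi$ to the pair $(\psi,\psi)$) is precisely what the paper leaves implicit.
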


Dans \cite{Bha}, lorsque sont v\'erifi\'ees les \'egalit\'es (\ref{eqCrIvg2bis}), 
$\psi$ est appel\'e un \gui{group inverse} de $\varphi$.

\subsection{Le cas des modules \tfz}

Nous d\'eveloppons maintenant un petit peu d'alg\`ebre lin\'eaire
sur les \mtfsz, en donnant quelques r\'esultats
bien connus pour les \evcs de dimension finie qui g\'en\'eralisent de mani\`ere 
parfois inattendue.

\begin{proposition} 
\label{prop quot non iso} {\em  (\cite{MRR} chap. III, exo. 9 p. 80)}
Soit $E$ un \Amo \tf et $\varphi~:E\rightarrow E$ une \ali surjective. Alors 
$\varphi$ est un \isoz. 
\end{proposition}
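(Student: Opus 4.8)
The plan is to turn the pair $(E,\varphi)$ into a single finitely generated module over the polynomial ring $\A[X]$ and then invoke the \gui{d\'eterminant trick} already used in the proof of Lemma~\ref{lemIdIdm}. First I would give $E$ the structure of an $\A[X]$-module in which $X$ acts as $\varphi$; since $E$ is generated over $\A$ by finitely many elements $e_1,\dots ,e_n$, it is generated by the same elements over $\A[X]$. The surjectivity of $\varphi$ then translates into the single equation $\gen{X}\,E=E$, because $X\cdot E=\varphi(E)=E$.

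Next I would extract a concrete matrix relation. As the $\varphi(e_j)$ generate $\Im\,\varphi=E$, each generator can be written $e_i=\sum_{j=1}^n c_{ij}\,\varphi(e_j)$ with $c_{ij}\in\A$; in the $\A[X]$-module structure this reads $e_i=\sum_j (c_{ij}X)\,e_j$, so the matrix $C=(c_{ij}X)\in\A[X]^{n\times n}$, whose entries all lie in $\gen{X}$, satisfies $(\In-C)\,\mathbf{e}=0$ for the column $\mathbf{e}=\tra{(e_1,\dots ,e_n)}$. Multiplying on the left by the adjugate $\Adj(\In-C)$ yields $\det(\In-C)\,e_i=0$ for every $i$, hence $\det(\In-C)\,E=0$. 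Since $C\equiv 0 \bmod \gen{X}$, we have $\det(\In-C)=1+X\,g(X)$ for some $g\in\A[X]$, and therefore $(1+X\,g(X))\,E=0$.

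Reading this last identity back in $\End_\A(E)$, where $X$ acts as $\varphi$, it says $\Id_E+\varphi\circ g(\varphi)=0$. As $g(\varphi)$ is a polynomial in $\varphi$ it commutes with $\varphi$, so $\varphi\circ(-g(\varphi))=(-g(\varphi))\circ\varphi=\Id_E$; thus $-g(\varphi)$ is a two-sided inverse and $\varphi$ is an isomorphism. I do not expect a genuine obstacle here: the whole argument is the determinant trick in disguise and is entirely constructive (no test for equality to zero is required), the only points deserving care being that the $\A[X]$-module structure is well defined and that $\det(\In-C)$ is congruent to $1$ modulo $\gen{X}$. Note moreover that the inverse produced is itself a polynomial in $\varphi$ with coefficients in $\A$.
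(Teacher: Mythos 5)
Your proof is correct and is essentially the paper's own argument: the paper runs the same determinant trick, working over the commutative ring $\gB=\A[\varphi]\subset\End_\A(E)$ and viewing $E$ as a $\gB$-module, with a matrix $P$ whose entries lie in the principal ideal $\gen{\varphi}$ — exactly your $\A[X]$-computation read through the homomorphism $\A[X]\to\gB$, $X\mapsto\varphi$. Both arguments conclude that $\det(\In-P)=\Id_E-\varphi\,\psi$ annihilates $E$ with $\psi$ a polynomial in $\varphi$, so that $\psi$ (your $-g(\varphi)$) is a two-sided inverse of $\varphi$.
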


\junk{
\begin{proof}{Preuve}
Soit $(x_1,\ldots ,x_n)$ un syst\`eme g\'en\'erateur de $E$. Soit 
$\gB=\A[\varphi]\subset{\rm End}_\A(E)$ et $\cI=(\varphi)$ l'id\'eal principal de 
$\gB$ engendr\'e par $\varphi$. L'anneau $\gB$ est commutatif et on peut 
consid\'erer $E$ comme un $\gB$-module. Puisque $\varphi$ est surjective, il 
existe une matrice $P\in \cI^{n\times n}$ v\'erifiant 
$P \tra{(x_1,\ldots ,x_n)}=\tra{(x_1,\ldots ,x_n)}$, c.-\`a-d.
$(\In-P) \tra{(x_1,\ldots ,x_n)}=  \tra{(0,\ldots ,0)}~$ (o\`u $\In=(\In)_\gB$ est 
la matrice \idt de $\gB^{n\times n}$),
donc:

\smallskip \centerline{$ \det(\In-P)\tra{(x_1,\ldots ,x_n)}= 
\Adj (\In-P)\,(\In-P)\tra{(x_1,\ldots ,x_n)}= \tra{(0,\ldots ,0)}.$}
 
\sni Donc $\det(\In-P)=0_\gB$,  or $\det(\In-P)=1_\gB-\varphi \psi$ avec 
$\psi\in \gB$ (puisque $P$ est \`a \coes dans $\cI=\varphi \gB$). Ainsi $\varphi 
\psi=\psi \varphi =1_\gB=\Id_E$, donc $\varphi$ 
est inversible dans $\gB$.
\end{proof}
}

\begin{proposition} 
\label{propAutocro} Soit $E$ un \Amo \tf et une \ali $\varphi:E\rightarrow E$. 
\Propeq
\begin{enumerate}
\item $E=\Im\,\varphi \oplus \Ker\,\varphi$ (i.e. $\varphi$ est
\cro avec elle-m\^{e}me).
\item $E=\Im\,\varphi + \Ker\,\varphi$ 
\item $\Im\,\varphi =\Im\,\varphi^2$.
\end{enumerate}
\end{proposition}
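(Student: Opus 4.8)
The plan is to prove the chain of implications $(1) \Rightarrow (2) \Rightarrow (3) \Rightarrow (1)$, which establishes the equivalence of all three conditions. The implication $(1) \Rightarrow (2)$ is immediate, since a direct sum decomposition is in particular a sum.

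For $(2) \Rightarrow (3)$, I would note that $\Im\,\varphi^2 \subseteq \Im\,\varphi$ holds trivially, so only the reverse inclusion needs argument. Taking $y = \varphi(x) \in \Im\,\varphi$ and writing $x = \varphi(u) + v$ with $v \in \Ker\,\varphi$ using (2), one gets $y = \varphi(x) = \varphi^2(u) + \varphi(v) = \varphi^2(u) \in \Im\,\varphi^2$. This step is purely formal and uses neither finite generation nor any special structure of $\A$.

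The heart of the matter is $(3) \Rightarrow (1)$, and this is where finite generation enters. Since $E$ is finitely generated, so is its image $\Im\,\varphi$ (it is generated by the images of a generating family of $E$), and $\varphi$ maps $\Im\,\varphi$ into itself because $\varphi(\Im\,\varphi) = \Im\,\varphi^2$. Under hypothesis (3) this last equality reads $\varphi(\Im\,\varphi) = \Im\,\varphi$, so the restriction $\varphi|_{\Im\,\varphi}$ is a surjective endomorphism of the finitely generated module $\Im\,\varphi$. By Proposition \ref{prop quot non iso} it is therefore an isomorphism. I expect this invocation to be the key, and the only delicate, point of the argument; everything else is bookkeeping.

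Granting that $\varphi|_{\Im\,\varphi}$ is bijective, the decomposition follows from two short verifications. For directness of the sum, if $z \in \Im\,\varphi \cap \Ker\,\varphi$ then $\varphi|_{\Im\,\varphi}(z) = \varphi(z) = 0$, whence $z = 0$ by injectivity. For the sum itself, given $x \in E$ the element $\varphi(x)$ lies in $\Im\,\varphi = \varphi(\Im\,\varphi)$, so there is $w \in \Im\,\varphi$ with $\varphi(w) = \varphi(x)$; then $x = w + (x - w)$ with $w \in \Im\,\varphi$ and $x - w \in \Ker\,\varphi$. Hence $E = \Im\,\varphi \oplus \Ker\,\varphi$, which is condition (1), closing the cycle.
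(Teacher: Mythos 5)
Your proof is correct and follows essentially the same route as the paper: the key step is exactly the paper's, namely applying Proposition \ref{prop quot non iso} to the surjective restriction of $\varphi$ to the finitely generated module $\Im\,\varphi$ to get injectivity, hence directness of the sum. The only difference is cosmetic organization (you close the cycle $3\Rightarrow 1$ directly, whereas the paper proves $2\Leftrightarrow 3$ and then derives $1$ from $2$ and $3$ together), with the same decompositions and verifications throughout.
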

\begin{proof}{Preuve}
1 implique clairement 2 et 3.\\
2 implique 3: Tout $x\in E$ s'\'ecrit $x=\varphi(y)+z$ avec $\varphi(z)=0$ donc 
tout $\varphi(x)\in\Im\,\varphi$ s'\'ecrit
$\varphi^2(y)$.\\
3 implique 2: Si $\varphi (x)=\varphi (\varphi (y))$ alors
$\varphi (x-\varphi (y))=0$ donc $x=\varphi(y)+z$ avec $\varphi (z)=0$.\\
2 et 3 impliquent 1: Soit $\varphi_0:\Im\,\varphi \rightarrow \Im\,\varphi$ 
obtenue par restriction de $\varphi$. Le module
$\Im\,\varphi$ est \tf puisque $E$ est \tfz. Mais $\varphi_0$ est surjective par 
hypoth\`ese. Donc, par la proposition \ref{prop quot non iso}
 $\varphi_0$ est bijective. Ceci implique clairement $\Ker\,\varphi \cap
\Im\,\varphi =0$.
\end{proof}

De la m\^{e}me fa\c{c}on:

\begin{proposition} 
\label{propCrocro} 
Soient $E$ et $F$ deux \Amos \tfz.  Des \alis $\varphi:E\rightarrow F$ et 
$\varphi\bul:F\rightarrow E$ telles que $\Im\,\varphi\bul+\Ker\,\varphi =E$ 
et  $\Im\,\varphi+\Ker\,\varphi\bul =F$ sont \crosz. 
\end{proposition}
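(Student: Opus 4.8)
Le plan est de montrer que les deux sommes figurant dans l'hypoth\`ese sont en fait directes, en reprenant exactement la strat\'egie de la preuve de la proposition~\ref{propAutocro} \`a l'aide de la proposition~\ref{prop quot non iso}. Puisque les deux sommes $\Im\,\varphi\bul + \Ker\,\varphi = E$ et $\Im\,\varphi + \Ker\,\varphi\bul = F$ sont d\'ej\`a donn\'ees, il suffit d'\'etablir que les intersections correspondantes sont nulles, \cad $\Ker\,\varphi \cap \Im\,\varphi\bul = 0$ et $\Im\,\varphi \cap \Ker\,\varphi\bul = 0$. On aura alors $E = \Ker\,\varphi \oplus \Im\,\varphi\bul$ et $F = \Im\,\varphi \oplus \Ker\,\varphi\bul$, ce qui est pr\'ecis\'ement la d\'efinition de \gui{crois\'ees}.

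Le point central est de montrer que l'endomorphisme de $\Im\,\varphi\bul$ obtenu par restriction de $\varphi\bul\varphi$ est surjectif. Pour cela on \'ecrira la cha\^{\i}ne d'\'egalit\'es
$$\Im\,\varphi\bul = \varphi\bul(F) = \varphi\bul(\Im\,\varphi + \Ker\,\varphi\bul) = \varphi\bul(\Im\,\varphi) = \varphi\bul\varphi(E),$$
o\`u l'on utilise l'hypoth\`ese $F = \Im\,\varphi + \Ker\,\varphi\bul$ et le fait que $\varphi\bul(\Ker\,\varphi\bul) = 0$\,; puis, gr\^ace \`a l'hypoth\`ese $E = \Im\,\varphi\bul + \Ker\,\varphi$ et \`a $\varphi\bul\varphi(\Ker\,\varphi) = 0$,
$$\varphi\bul\varphi(E) = \varphi\bul\varphi(\Im\,\varphi\bul + \Ker\,\varphi) = \varphi\bul\varphi(\Im\,\varphi\bul).$$
On obtient ainsi $\varphi\bul\varphi(\Im\,\varphi\bul) = \Im\,\varphi\bul$, c'est-\`a-dire la surjectivit\'e annonc\'ee.

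Comme $F$ est \tf, le module $\Im\,\varphi\bul$ l'est aussi, et la proposition~\ref{prop quot non iso} garantit que cet endomorphisme surjectif $\sigma$ de $\Im\,\varphi\bul$ est un isomorphisme, donc injectif. Si $z \in \Ker\,\varphi \cap \Im\,\varphi\bul$, alors $\sigma(z) = \varphi\bul\varphi(z) = 0$, d'o\`u $z = 0$, ce qui donne $E = \Ker\,\varphi \oplus \Im\,\varphi\bul$. Les hypoth\`eses \'etant sym\'etriques en $(\varphi, E)$ et $(\varphi\bul, F)$, le m\^eme argument appliqu\'e \`a la restriction de $\varphi\varphi\bul$ \`a $\Im\,\varphi$ fournira $\Im\,\varphi \cap \Ker\,\varphi\bul = 0$, et donc $F = \Im\,\varphi \oplus \Ker\,\varphi\bul$. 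Le seul v\'eritable obstacle est la surjectivit\'e du paragraphe pr\'ec\'edent~: c'est l'unique endroit o\`u les deux hypoth\`eses de somme interviennent, et dans le bon ordre~; une fois cette \'etape acquise, l'argument de finitude et la sym\'etrie sont purement formels.
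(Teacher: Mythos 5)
Your proof is correct and follows essentially the same route as the paper's: the surjective endomorphism $\sigma$ of $\Im\,\varphi\bul$ that you build by the chain of image computations is exactly the composition of the two quotient surjections (restrictions of $\varphi$ and $\varphi\bul$) that the paper obtains via the isomorphism theorems, and both arguments then invoke Proposition \ref{prop quot non iso} on a \mtf and finish by symmetry.
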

\begin{proof}{Preuve}
Si $\Im\,\varphi\bul+\Ker\,\varphi =E$ alors $\Im\,\varphi \simeq E/\Ker\,\varphi 
\simeq \Im\,\varphi\bul/(\Ker\,\varphi\cap\Im\,\varphi\bul)$. 
De mani\`ere sym\'etrique $\Im\,\varphi\bul$ est isomorphe
\`a un quotient de $\Im\,\varphi$. En composant ces deux \isos
on trouve que $\Im\,\varphi$ est isomorphe \`a un 
quotient de lui-m\^{e}me par un sous-module plus grand que 
$\Ker\,\varphi\cap\Im\,\varphi\bul$. La proposition  \ref{prop quot non iso} 
implique donc que $\Ker\,\varphi\cap\Im\,\varphi\bul=0$. 
M\^{e}me chose pour  $\Ker\,\varphi\bul\cap\Im\,\varphi$.
\end{proof}

\section{Interpr\'etation de l'\ing avec des \idts de Cramer}   
\label{subsecInter}

Lorsqu'on a une matrice carr\'ee $A$ d'ordre $n$, il y a deux mani\`eres tr\`es 
diff\'erentes de
calculer sa matrice cotranspos\'ee $\Adj\,A$.
La premi\`ere consiste \`a calculer ses \coes qui sont, au signe pr\`es,
des mineurs d'ordre $n-1$ de $A$. La seconde consiste \`a utiliser le \tho de 
Cayley-Hamilton qui nous fournit un \pol $Q(X)$, facilement d\'eduit du \polcarz,
v\'erifiant $AQ(A)=\det\,A\;\I_n$. Alors $\Adj\,A=Q(A)$. 
Cette co\"{\i}ncidence peut \^{e}tre vue comme une famille d'\idas remarquables.
Dans cette section nous g\'en\'eralisons ce r\'esultat \gui{en rang $k<n$}.
Les choses sont cependant un peu plus d\'elicates et il est
plus pratique de travailler
avec deux \alisz.

\medskip Dans cette section $E$ et $F$ sont des modules libres de dimensions 
finies.
On consid\`ere deux \alis $\varphi:E\rightarrow F $ et
 $\varphi\bul:F\rightarrow E$. 
On ne suppose pas a priori que $\varphi$ et $\varphi\bul$ sont \crosz.
Soient $A$ et $A\bul$ des matrices pour $\varphi$ et $\varphi\bul$ sur des bases 
fix\'ees de $E$ et $F$.
On note pour simplifier $\mu_{\alpha ,\beta}=\det(A_{\alpha,\beta})$ 
et  $\mu\bul_{\beta,\alpha}=\det(A\bul_{\beta,\alpha}).$ 

La formule de  Binet-Cauchy montre que:
\begin{lemma} 
\label{propGramCroise}
Si $p$ est la plus petite des dimensions de  $E$ et $F$ et si
$\det(\Id_E+Z\,\varphi\bul \varphi)= 1+ a_1 Z + \cdots+ a_p Z^p$ alors, pour tout 
$k\leq p$:
$$  
a_k=\sum\nolimits_{\alpha\in \cP_{k,m},\beta\in 
\cP_{k,n}}\,\mu\bul_{\beta,\alpha}\,\mu_{\alpha,\beta}\,.
$$
\end{lemma}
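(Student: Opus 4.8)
The plan is to combine two classical determinant identities: the expansion of a characteristic-type determinant into principal minors, and the Cauchy--Binet formula. Once bases of $E$ and $F$ are fixed, the endomorphism $\varphi\bul\varphi$ of $E$ is represented by the $n\times n$ matrix $A\bul A$ (the product of $A\bul$, of size $n\times m$, with $A$, of size $m\times n$), so that $\det(\Id_E+Z\,\varphi\bul\varphi)=\det(\I_n+Z\,A\bul A)$, and the task reduces to identifying the coefficient $a_k$ of $Z^k$ in this polynomial.

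Next I would use the universal identity, valid over any commutative ring: writing $\I_n+ZM$ columnwise as $e_j+ZM_j$ and expanding the determinant by multilinearity in the columns, the coefficient of $Z^k$ is the sum of all principal $k\times k$ minors,
$$\det(\I_n+ZM)=\sum\nolimits_{k\geq 0} Z^k \sum\nolimits_{\gamma\in\cP_{k,n}}\det(M_{\gamma,\gamma}).$$
Each term of the multilinear expansion attaches the column $M_j$ to the indices $j\in\gamma$ and the basis vector $e_j$ to the others; Laplace expansion along the latter leaves exactly the principal submatrix $M_{\gamma,\gamma}$, with sign $+1$. Applied to $M=A\bul A$ this gives $a_k=\sum_{\gamma\in\cP_{k,n}}\det\big((A\bul A)_{\gamma,\gamma}\big)$.

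The core step is then to factor each principal minor through the intermediate space and apply Cauchy--Binet. For $\gamma\in\cP_{k,n}$ one has $(A\bul A)_{\gamma,\gamma}=(A\bul)_{\gamma,1..m}\,A_{1..m,\gamma}$, a product of a $k\times m$ matrix by an $m\times k$ matrix, so Cauchy--Binet yields
$$\det\big((A\bul A)_{\gamma,\gamma}\big)=\sum\nolimits_{\alpha\in\cP_{k,m}}\det\big((A\bul)_{\gamma,\alpha}\big)\det\big(A_{\alpha,\gamma}\big)=\sum\nolimits_{\alpha\in\cP_{k,m}}\mu\bul_{\gamma,\alpha}\,\mu_{\alpha,\gamma}.$$
Substituting into the previous expression and renaming $\gamma$ as $\beta$ gives exactly $a_k=\sum_{\alpha\in\cP_{k,m},\,\beta\in\cP_{k,n}}\mu\bul_{\beta,\alpha}\,\mu_{\alpha,\beta}$, as claimed.

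I expect no serious obstacle: both ingredients are standard and hold over an arbitrary commutative $\A$, each being a polynomial identity in the entries, provable over $\ZZ[x_{ij}]$ and then specialized. The only points requiring care are the index bookkeeping---tracking which multi-index labels the rows versus the columns of $A\bul$, so that $\mu\bul_{\gamma,\alpha}=\det((A\bul)_{\gamma,\alpha})$ matches the paper's convention $\mu\bul_{\beta,\alpha}=\det(A\bul_{\beta,\alpha})$---and the observation that the degree bound is automatic, since for $k>p=\min(m,n)$ at least one of $\cP_{k,m}$, $\cP_{k,n}$ is empty and the corresponding $a_k$ vanishes.
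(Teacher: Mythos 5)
Your proof is correct and takes exactly the route the paper intends: the paper gives no written argument beyond the remark that the formula follows from Binet--Cauchy, and your proof (expanding $\det(\I_n+Z\,A\bul A)$ into principal minors, then applying Cauchy--Binet to each factorization $(A\bul A)_{\gamma,\gamma}=(A\bul)_{\gamma,1..m}\,A_{1..m,\gamma}$) is precisely the standard filling-in of that remark. The index bookkeeping matches the paper's conventions, and your observation that $a_k=0$ for $k>p$ because $\cP_{k,\min(m,n)}$ is empty correctly accounts for the stated degree bound.
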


\begin{notation} 
\label{notaGenAdj}
On reprend les hypoth\`eses pr\'ec\'edentes. 
On notera  $\cG^{(k)}_{\varphi\bul}(\varphi)=\rd k(\varphi\bul \varphi)=a_k$. Nous 
les appelerons des \emph{\coes de Gram mixtes}. Enfin
nous d\'efinissons $\Adj_{\varphi\bul,k}(\varphi)$ et 
 $\Adj^{(k)}_{\varphi\bul}(\varphi)$ par:
\begin{eqnarray} 
\label{eqDefAdjficirc}
\Adj_{\varphi\bul,k}(\varphi)&=&\sum\nolimits_{\alpha\in \cP_{k,m},\beta\in 
\cP_{k,n}}\mu\bul_{\beta,\alpha}\,\Adj_{\alpha,\beta}(\varphi )\\[2mm]
\label{eqDefAdjficirc2}
\Adj^{(k)}_{\varphi\bul}(\varphi)&=&
a_{k-1}\,\varphi \bul-a_{k-2}\,\varphi \bul\varphi\,\varphi \bul  + \cdots+
(-1)^{k-1}(\varphi \bul\varphi )^{k-1}\varphi \bul\,.
\end{eqnarray}
\end{notation}

A priori l'\ali que nous avons not\'ee $\Adj_{\varphi\bul,k}(\varphi)$  d\'epend 
du choix des bases de $E$ et $F$. Nous allons voir bient\^{o}t qu'il n'en est 
rien. En effet nous allons montrer:

\begin{theorem} 
\label{thGENCRAM3} 
On a toujours: 
\begin{equation} \label{eqGENCRAM3}
\Adj^{(k)}_{\varphi\bul}(\varphi) 
\;= \; \Adj_{\varphi\bul,k}(\varphi) 
\end{equation}
En fait ces \alis sont aussi \'egales au \emph{gradient} de la fonction
$\varphi \mapsto \rd k(\varphi\bul\varphi)$ (notez bien que $\varphi\bul$ est ici 
une constante).
\end{theorem}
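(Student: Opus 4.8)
Le plan est de montrer que les deux membres de~(\ref{eqGENCRAM3}), qui vivent tous deux dans $\Hom(F,E)$, repr\'esentent une seule et m\^eme forme lin\'eaire sur $\Hom(E,F)$~: la diff\'erentielle (le gradient) de la fonction scalaire $\varphi\mapsto \rd k(\varphi\bul\varphi)=a_k$, o\`u $\varphi\bul$ est tenu pour constant. Comme $E$ et $F$ sont libres de dimensions finies, l'accouplement
\[
\Hom(F,E)\times\Hom(E,F)\longrightarrow\A,\qquad (G,\delta)\longmapsto\Tr(G\,\delta),
\]
est parfait (c'est le produit scalaire usuel sur les entr\'ees des matrices)~; il suffit donc d'\'etablir que, pour tout $\delta\in\Hom(E,F)$, on a $\mathrm{D}a_k[\delta]=\Tr(\Adj_{\varphi\bul,k}(\varphi)\,\delta)$ et $\mathrm{D}a_k[\delta]=\Tr(\Adj^{(k)}_{\varphi\bul}(\varphi)\,\delta)$, la non-d\'eg\'en\'erescence de l'accouplement for\c{c}ant alors l'\'egalit\'e des deux \alisz, ainsi que l'assertion finale de l'\'enonc\'e. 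Tout ce qui suit \'etant une identit\'e polynomiale en les entr\'ees des matrices de $\varphi$ et $\varphi\bul$, on travaille avec des d\'eriv\'ees \emph{formelles} (entr\'ees vues comme ind\'etermin\'ees, au besoin sur~$\ZZ$ puis sp\'ecialis\'ees), de sorte que la formule de Jacobi $\mathrm{D}(\det M)[H]=\Tr(\Adj(M)\,H)$ et les d\'eveloppements ci-dessous valent sur un anneau commutatif arbitraire.

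Pour la premi\`ere \'egalit\'e, je partirais de la formule de Binet-Cauchy (lemme~\ref{propGramCroise}), qui donne $a_k=\sum_{\alpha\in\cP_{k,m},\,\beta\in\cP_{k,n}}\mu\bul_{\beta,\alpha}\,\mu_{\alpha,\beta}$, o\`u seuls les $\mu_{\alpha,\beta}=\det(A_{\alpha,\beta})$ d\'ependent de $\varphi$. La formule de Jacobi fournit $\mathrm{D}\mu_{\alpha,\beta}[\delta]=\Tr(\Adj(A_{\alpha,\beta})\,\delta_{\alpha,\beta})$, et la d\'efinition de $\Adj_{\alpha,\beta}$ (notation~\ref{notaAdjalbe}) jointe \`a la cyclicit\'e de la trace donne l'identit\'e-clef
\[
\Tr\!\big(\Adj(A_{\alpha,\beta})\,\delta_{\alpha,\beta}\big)=\Tr\!\big(\Adj_{\alpha,\beta}(A)\,\delta\big),
\]
puisque $(\I_m)_{\alpha,1.. m}\,\delta\,(\I_n)_{1.. n,\beta}=\delta_{\alpha,\beta}$. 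En sommant avec les poids $\mu\bul_{\beta,\alpha}$, on obtient $\mathrm{D}a_k[\delta]=\Tr(\Adj_{\varphi\bul,k}(\varphi)\,\delta)$, c'est-\`a-dire que $\Adj_{\varphi\bul,k}(\varphi)$ est bien le gradient cherch\'e.

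Pour la seconde \'egalit\'e, je poserais $\psi=\varphi\bul\varphi\in\End(E)$, de sorte que $a_k=\rd k(\psi)$ est le $k$-i\`eme coefficient de $\det(\Id_E+Z\psi)$ et que $\mathrm{D}\psi[\delta]=\varphi\bul\,\delta$ ($\varphi\bul$ \'etant constant). Le point technique est le calcul du gradient de $\psi\mapsto\rd k(\psi)$~: en \'ecrivant $\Adj(\Id_E+Z\psi)=\sum_{j\geq0}C_j\,Z^j$ et en exploitant $(\Id_E+Z\psi)\,\Adj(\Id_E+Z\psi)=\det(\Id_E+Z\psi)\,\Id_E$, on obtient la r\'ecurrence $C_j=a_j\,\Id_E-\psi\,C_{j-1}$, d'o\`u $C_j=\sum_{i=0}^{j}(-1)^i a_{j-i}\,\psi^i$~; la formule de Jacobi appliqu\'ee \`a $\det(\Id_E+Z\psi)$ via $(\Id_E+Z\psi)^{-1}=\Adj(\Id_E+Z\psi)/\det(\Id_E+Z\psi)$, puis l'identification du coefficient de $Z^k$, donnent $\mathrm{D}a_k[\Delta]=\Tr(C_{k-1}\,\Delta)$ pour $\Delta\in\End(E)$. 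La d\'erivation compos\'ee fournit alors, pour $\delta\in\Hom(E,F)$,
\[
\mathrm{D}a_k[\delta]=\Tr\!\big(C_{k-1}\,\varphi\bul\,\delta\big)=\Tr\!\Big(\big(\textstyle\sum_{i=0}^{k-1}(-1)^i a_{k-1-i}(\varphi\bul\varphi)^i\varphi\bul\big)\,\delta\Big)=\Tr\!\big(\Adj^{(k)}_{\varphi\bul}(\varphi)\,\delta\big),
\]
l'expression entre parenth\`eses \'etant exactement la d\'efinition~(\ref{eqDefAdjficirc2}) de $\Adj^{(k)}_{\varphi\bul}(\varphi)$. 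La principale difficult\'e sera de l\'egitimer ce maniement du \gui{gradient}, de la formule de Jacobi et du d\'eveloppement du r\'esolvant sur un anneau commutatif quelconque, ce qui se r\'esout en lisant le tout comme des identit\'es polynomiales formelles~; le reste n'est qu'une comptabilit\'e de cofacteurs sans difficult\'e.
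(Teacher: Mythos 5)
Votre proposition est correcte et suit essentiellement la m\^eme voie que l'article~: les deux membres sont identifi\'es, via l'accouplement trace (non d\'eg\'en\'er\'e sur des modules libres), au gradient de $\varphi\mapsto\rd k(\varphi\bul\varphi)$, en combinant Binet--Cauchy et la formule de Jacobi pour $\Adj_{\varphi\bul,k}(\varphi)$ (c'est le lemme \ref{lemAdjGrad} de l'article), puis le d\'eveloppement de $\Adj(\Id_E+Z\,\varphi\bul\varphi)$ et la d\'erivation compos\'ee pour $\Adj^{(k)}_{\varphi\bul}(\varphi)$ (c'est le lemme \ref{lemAdjGrad2}). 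Votre r\'ecurrence $C_j=a_j\,\Id_E-\psi\,C_{j-1}$ co\"{\i}ncide d'ailleurs exactement avec la \gui{division par puissances croissantes} utilis\'ee dans la preuve de l'article, de sorte qu'il n'y a pas de diff\'erence de fond.
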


Pour pr\'eciser la derni\`ere phrase, nous devons donner la d\'efinition du 
gradient d'une fonction polynomiale $\cL(E,F)\rightarrow \A$ (\cad une fonction 
qui est donn\'ee par un polynome en les entr\'ees de la matrice $A$ de $\varphi 
\in\cL(E,F)$ une fois choisies des bases de $E$ et $F$). 
Il ne s'agit de rien d'autre que la diff\'erentielle de la fonction, traduite sous 
forme d'un \elt $\theta\in\cL(F,E)$ en utilisant la dualit\'e canonique entre 
$\cL(E,F)$ et $\cL(F,E)$ donn\'ee par la forme bilin\'eaire \gui{trace du 
produit}. 

\begin{definition} 
\label{defGrad} 
Soit $a:\cL(E,F)\rightarrow \A$ une fonction polynomiale.
On appelle gradient de $a$ au point $\varphi$ et on note $\nabla(a)(\varphi)$ 
l'unique \ali $\theta\in\cL(F,E)$ telle que 
$a(\varphi +\epsilon )=a(\varphi)+ \Tr(\theta\,\epsilon) + \cO(\epsilon^{(2)})$,
o\`u  $\cO(\epsilon^{(2)})$ d\'esigne sous forme abr\'eg\'ee une fonction 
polynomiale de $\epsilon$ sans terme constant ni terme du premier degr\'e.
\end{definition}

\begin{proof}{Preuve du \tho \ref{thGENCRAM3}}
On utilise un fait bien connu et deux lemmes qui s'en d\'eduisent simplement. Nous 
donnons les preuves pour faciliter la lecture de l'article.
\begin{fact} 
\label{factAdjGrad} 
Pour un \endo  $\psi$  d'un \Amo $F$ libre de rang $m$ 
on a $$\nabla(\det)(\psi)=\Adj(\psi)\,.$$ 
\end{fact}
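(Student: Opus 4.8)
The assertion is exactly the Jacobi formula $\mathrm{d}(\det)=\Tr(\Adj(\,\cdot\,)\,\mathrm{d}(\cdot))$, so the plan is to compute the first-order term of $\det(\psi+\epsilon)$ by hand and match it against the defining identity $\det(\psi+\epsilon)=\det(\psi)+\Tr(\theta\,\epsilon)+\cO(\epsilon^{(2)})$ of Definition \ref{defGrad}. First I would fix a basis of $F$ and write $M=(m_{ij})$ for the matrix of $\psi$, $\epsilon=(e_{ij})$ for the matrix of the variation, with columns $M_1,\ldots,M_m$ and $\epsilon_1,\ldots,\epsilon_m$. Since both $\nabla(\det)(\psi)$ and $\Adj(\psi)$ are intrinsic (the latter via $M\,\wt M=\wt M\,M=\det M\;\In$), it is enough to verify the matrix identity $\theta=\wt M$ in this single basis.

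The core step is to exploit the multilinearity of $\det$ in its $m$ columns. Expanding $\det(M_1+\epsilon_1,\ldots,M_m+\epsilon_m)$ into $2^m$ terms indexed by the set of columns drawn from $\epsilon$, the empty set gives the constant $\det(M)$, the sets of cardinality $\geq 2$ contribute to $\cO(\epsilon^{(2)})$, and the degree-one part is precisely
$$\sum_{j=1}^m \det\bigl(M_1,\ldots,M_{j-1},\epsilon_j,M_{j+1},\ldots,M_m\bigr).$$
Expanding the $j$-th of these along its $j$-th column (the only altered one) by Laplace, each summand becomes $\sum_{i=1}^m e_{ij}\,\gamma_{ij}$, where $\gamma_{ij}=(-1)^{i+j}\det(M_{\hat\imath,\hat\jmath})$ is the $(i,j)$ cofactor of $M$; note the cofactor involves no entry of column $j$, so it is genuinely the cofactor of $M$ and not of the modified matrix.

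It then remains to reassemble the sum as a trace. By the definition of the cotranspose, the $(j,i)$ entry of $\wt M=\Adj(\psi)$ equals $\gamma_{ij}$, so the degree-one part rewrites as $\sum_{i,j}\wt M_{ji}\,e_{ij}=\sum_j(\wt M\,\epsilon)_{jj}=\Tr(\Adj(\psi)\,\epsilon)$, which matches the pairing $\Tr(\theta\,\epsilon)=\sum_{i,j}\theta_{ij}e_{ji}$ of Definition \ref{defGrad} exactly when $\theta=\wt M$. Since that definition already asserts $\theta$ to be \emph{unique} (the pairing $(\theta,\epsilon)\mapsto\Tr(\theta\,\epsilon)$ on matrices being nondegenerate), exhibiting $\Adj(\psi)$ as one solution forces $\nabla(\det)(\psi)=\Adj(\psi)$.

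I expect the only genuinely delicate point to be this last transposition of indices: differentiating in the entry $m_{ij}$ produces the cofactor $\gamma_{ij}$, whereas $\Adj(\psi)$ places $\gamma_{ij}$ in position $(j,i)$, and it is exactly the trace pairing that reconciles the two. Everything else is a formal polynomial identity in the entries of $M$ and $\epsilon$ — multilinearity and Laplace expansion are used purely algebraically — so the computation is valid over the arbitrary commutative ring $\A$ with no reducedness or invertibility hypothesis whatsoever.
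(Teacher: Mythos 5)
Your proof is correct and coincides with the second of the two proofs the paper gives for this fact: multilinearity of $\det$ in the columns isolates the degree-one term $\sum_j\det(A_1,\ldots,H_j,\ldots,A_n)$, which the paper identifies with $\Tr(\Adj(A)\,H)$ by appeal to the Cramer identities — exactly the cofactor/Laplace computation you carry out explicitly. (The paper's first proof instead proceeds through invertible matrices and a density/algebraic-identity argument over $\QQ$, which your purely formal computation over an arbitrary commutative ring avoids.)
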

\begin{proof}{Preuve du fait \ref{factAdjGrad}}~\\
Voici une premi\`ere preuve. Raisonnons avec des matrices carr\'ees. On a 
$\det(\I_m+H)=1+\Tr\,H+\cO(H^{(2)})$ o\`u  $\cO(H^{(2)})$ est comme dans la 
d\'efinition \ref{defGrad}. Donc $\nabla(\det)(\I_m)=\I_m$.
Si $A$ est inversible on a  
$\det(A+H)=\det(A)\det(\I_m+A^{-1}H)=\det(A)+\det(A)\Tr\,A^{-1}H+ \cO(H^{(2)})$. 
Comme $\det(A)\Tr\,A^{-1}H=\Tr(\det(A)A^{-1}H)$ et
$\Adj\,A=\det(A)A^{-1}$ cela donne $\nabla(\det)(A)=\Adj\,A$.
On conclut en remarquant qu'on vient de d\'emontrer,
sous la condition \gui{$A$ inversible} une \ida dans laquelle on n'a pas 
pr\'ecis\'e le contenu exact du terme  $\cO(H^{(2)})$. Mais puisqu'il s'agit bien 
d'une \idaz, il suffisait de la d\'emontrer pour $A$ dans un ouvert de 
$\QQ^{n\times n}$.\\
Une autre preuve est la suivante: si $A_i$ (resp. $H_i$) d\'esigne la
$i$-\`eme colonne de $A$ (resp. de $H$), il est clair que la diff\'erentielle de 
$\det$ au point $A$ est l'\ali
$$ H\mapsto \det(H_1, A_2,\ldots , A_n) + \cdots  + \det(A_1, \ldots , A_{n-1}, 
H_n)\,.
$$
Par ailleurs l'\egt 
$$ \det(H_1, A_2, \ldots , A_n) + \cdots  + \det(A_1, \ldots , 
A_{n-1},H_n) = \Tr(\Adj(A)\,H)
$$
r\'esulte clairement des \idcs (cf. par exemple notre section 
\ref{subsecIdtCra1}). 
\end{proof}
Un premier corollaire imm\'ediat est le lemme suivant.
\begin{lemma} 
\label{lemAdjGrad} 
On fixe des bases de $E$ et $F$. Le gradient de la fonction $\cL(E,F)\rightarrow 
\A\,:\,\varphi\mapsto \mu_{\alpha ,\beta }=\det(A_{\alpha,\beta})$ (o\`u $A$  est 
la matrice de $\varphi$) au point $\varphi$ est l'\endo \gui{cotranspos\'e en 
$(\alpha,\beta)$} ayant pour  matrice
$\Adj_{\alpha,\beta}(A)$.
\end{lemma}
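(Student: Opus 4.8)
The plan is to recognize $\mu_{\alpha,\beta}$ as the composite of a \emph{linear} extraction map with the determinant of a $k\times k$ matrix, and then to differentiate by the chain rule, the determinant part being supplied by Fact~\ref{factAdjGrad}. Throughout, write $A$ for the matrix of $\varphi$ and let $\epsilon\in\cL(E,F)$ have matrix $H\in\A^{m\times n}$.

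First I would record the perturbation. Since $\mu_{\alpha,\beta}(\varphi)=\det(A_{\alpha,\beta})$ depends only on the extracted block and since $H\mapsto H_{\alpha,\beta}$ is $\A$-linear, we have $\mu_{\alpha,\beta}(\varphi+\epsilon)=\det\big(A_{\alpha,\beta}+H_{\alpha,\beta}\big)$. Applying Fact~\ref{factAdjGrad} to the $k\times k$ matrix $A_{\alpha,\beta}$ with increment $H_{\alpha,\beta}$ gives
\[
\mu_{\alpha,\beta}(\varphi+\epsilon)=\det(A_{\alpha,\beta})+\Tr\big(\Adj(A_{\alpha,\beta})\,H_{\alpha,\beta}\big)+\cO(H^{(2)}),
\]
where the quadratic remainder in $H_{\alpha,\beta}$ is a fortiori $\cO(H^{(2)})$ because extraction is linear.

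Second, the crux is the selection-matrix identity $H_{\alpha,\beta}=(\I_m)_{\alpha,1.. m}\,H\,(\I_n)_{1.. n,\beta}$: extracting rows $\alpha$ and columns $\beta$ is exactly left multiplication by the row-selection matrix and right multiplication by the column-selection matrix, the very matrices appearing in Notation~\ref{notaAdjalbe}. Substituting this and using the cyclicity of the trace,
\[
\Tr\big(\Adj(A_{\alpha,\beta})\,H_{\alpha,\beta}\big)=\Tr\big((\I_n)_{1.. n,\beta}\,\Adj(A_{\alpha,\beta})\,(\I_m)_{\alpha,1.. m}\,H\big)=\Tr\big(\Adj_{\alpha,\beta}(A)\,H\big).
\]
By the uniqueness in Definition~\ref{defGrad}, this exhibits $\nabla(\mu_{\alpha,\beta})(\varphi)$ as the element of $\cL(F,E)$ whose matrix is $\Adj_{\alpha,\beta}(A)$, that is, the announced cotransposed-in-$(\alpha,\beta)$ endomorphism.

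I do not expect a genuine obstacle: the only point requiring care is the identity $H_{\alpha,\beta}=(\I_m)_{\alpha,1.. m}\,H\,(\I_n)_{1.. n,\beta}$, which is immediate from the definitions and is precisely the structure already built into $\Adj_{\alpha,\beta}$. An alternative, equally short route bypasses the trace manipulation by computing entrywise: $\partial\mu_{\alpha,\beta}/\partial a_{\alpha_i,\beta_j}$ is the $(i,j)$-cofactor of $A_{\alpha,\beta}$, hence the $(j,i)$-entry of $\Adj(A_{\alpha,\beta})$, and one checks that placing these cofactors in the positions $(\beta_j,\alpha_i)$ produces exactly $\Adj_{\alpha,\beta}(A)$ --- again matching the gradient against the pairing $\Tr(\theta\,\epsilon)$.
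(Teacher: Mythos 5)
Your proposal is correct and follows essentially the same route as the paper: both differentiate the composite of the linear extraction map $H\mapsto H_{\alpha,\beta}=(\I_m)_{\alpha,1.. m}\,H\,(\I_n)_{1.. n,\beta}$ with the determinant, invoke Fact~\ref{factAdjGrad} for the block, and then use cyclicity of the trace to identify the gradient with $\Adj_{\alpha,\beta}(A)$. The only difference is cosmetic: the paper phrases the chain-rule step directly in terms of $\lambda(H)=P_\alpha H J_\beta$, whereas you write out the perturbation expansion and note explicitly that the remainder stays $\cO(H^{(2)})$ under the linear substitution.
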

\begin{proof}{Preuve du lemme \ref{lemAdjGrad}}~\\
Raisonnons avec des matrices. Notons $J_\beta=(\I_n)_{1.. n,\beta}$ et 
$P_\alpha=(\I_m)_{\alpha,1.. m}$ les deux matrices telles que $P_\alpha \,A\, 
J_\beta=A_{\alpha,\beta}$.
Puisque l'application $\lambda : A\mapsto A_{\alpha,\beta}$ est lin\'eaire, la 
diff\'erentielle de $A\mapsto\det\,A_{\alpha,\beta}$ calcul\'ee au point $A$ pour 
l'accroissement $H$ est donn\'ee par
$$ \Tr(\Adj(A_{\alpha,\beta}) (\lambda(H)))=
\Tr(\Adj(A_{\alpha,\beta}) \,P_\alpha \,H\, J_\beta) =
\Tr(J_\beta\,\Adj(A_{\alpha,\beta})\, P_\alpha\, H ) =
\Tr(\Adj_{\alpha,\beta}(A) \, H )\,.
$$
Autrement dit
$$ \nabla(M\mapsto\det\,M_{\alpha,\beta})(A)=\Adj_{\alpha,\beta}(A)\,.
$$
\end{proof}
Vu le lemme \ref{propGramCroise}, un corollaire de ce lemme est que l'\ali 
$\Adj_{\varphi\bul,k}(\varphi)$ est le gradient de la fonction $\varphi \mapsto 
\rd k(\varphi\bul\varphi)$. En particulier, malgr\'e les apparences de sa 
d\'efinition, cette \ali ne d\'epend que de $\varphi,$ $\varphi\bul$ et $k$, et 
non des bases choisies.\\
L'autre lemme, bien connu en th\'eorie des invariants 
(voir par exemple \cite{Rai,Tu1,Tu2}), est:
\begin{lemma} 
\label{lemAdjGrad2} 
Pour un \endo  $\psi$  d'un \Amo libre $F$  on a 
$$\nabla(\rd k)(\psi)=\rd {k-1}(\psi)\,\Id_F-\rd {k-2}(\psi)\,\psi +\rd {k-
3}(\psi)\,\psi^2  + \cdots+
(-1)^{k-1}\psi^{k-1}\,.$$ 
\end{lemma}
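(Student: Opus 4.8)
Le plan est de déduire ce lemme du fait \ref{factAdjGrad} en introduisant le paramètre formel $Z$. Je considère la fonction polynomiale $F\colon\psi\mapsto\det(\Id_F+Z\psi)=\sum_{\ell\geq 0}\rd\ell(\psi)\,Z^\ell$ (avec la convention $\rd 0=1$), vue comme une fonction à valeurs dans $\A[Z]$, et je calcule son gradient de deux façons.

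D'abord je calcule $\nabla(F)(\psi)$ via le fait \ref{factAdjGrad}. En perturbant $\psi$ en $\psi+\epsilon$ on a $\Id_F+Z(\psi+\epsilon)=(\Id_F+Z\psi)+Z\epsilon$, de sorte que le fait \ref{factAdjGrad}, appliqué à l'\endo $\Id_F+Z\psi$ avec l'accroissement $Z\epsilon$, donne
$$F(\psi+\epsilon)=F(\psi)+\Tr\!\big(Z\,\Adj(\Id_F+Z\psi)\,\epsilon\big)+\cO(\epsilon^{(2)}),$$
d'où $\nabla(F)(\psi)=Z\,\Adj(\Id_F+Z\psi)$.

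Ensuite j'exprime $\Adj(\Id_F+Z\psi)$ à l'aide des $\rd\ell(\psi)$. L'identité fondamentale $(\Id_F+Z\psi)\,\Adj(\Id_F+Z\psi)=\det(\Id_F+Z\psi)\,\Id_F=F(\psi)\,\Id_F$, jointe à l'inversibilité de $\Id_F+Z\psi$ dans $\End(F)[[Z]]$ (d'inverse $\sum_{j\geq 0}(-1)^jZ^j\psi^j$), donne
$$\Adj(\Id_F+Z\psi)=F(\psi)\sum_{j\geq 0}(-1)^jZ^j\psi^j=\sum_{n\geq 0}Z^n\sum_{j=0}^{n}(-1)^j\,\rd{n-j}(\psi)\,\psi^j.$$
En reportant dans $\nabla(F)(\psi)=Z\,\Adj(\Id_F+Z\psi)$, le coefficient de $Z^k$ du membre de droite vaut $\sum_{j=0}^{k-1}(-1)^j\,\rd{k-1-j}(\psi)\,\psi^j$.

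Enfin je conclus en comparant les coefficients de $Z^k$. Comme l'extraction du coefficient de $Z^k$ est linéaire et commute avec le passage au gradient, on a aussi $\nabla(F)(\psi)=\sum_{k}Z^k\,\nabla(\rd k)(\psi)$; l'identification des coefficients de $Z^k$ avec l'expression précédente fournit
$$\nabla(\rd k)(\psi)=\sum_{j=0}^{k-1}(-1)^j\,\rd{k-1-j}(\psi)\,\psi^j=\rd{k-1}(\psi)\,\Id_F-\rd{k-2}(\psi)\,\psi+\cdots+(-1)^{k-1}\psi^{k-1},$$
le dernier terme provenant de $\rd 0=1$. C'est la formule annoncée. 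Le point qui demandera le plus de soin est cette dernière étape, à savoir la justification que l'extraction du coefficient en $Z$ commute avec le gradient en $\psi$, \cad que les termes $\cO(\epsilon^{(2)})$ se traitent uniformément en les puissances de $Z$; mais cela ne pose aucune difficulté réelle, puisque $F(\psi+\epsilon)$ est polynomiale à la fois en $Z$ et en les entrées de $\epsilon$, et qu'il suffit de lire la partie linéaire en $\epsilon$ du développement $F(\psi+\epsilon)=\sum_k\rd k(\psi+\epsilon)\,Z^k$ coefficient par coefficient en $Z$. Tout l'argument reste valable sur un anneau commutatif arbitraire, le fait \ref{factAdjGrad} l'étant et l'inversion de $\Id_F+Z\psi$ ayant lieu dans $\End(F)[[Z]]$.
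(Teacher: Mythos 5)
Votre preuve est correcte et suit essentiellement la m\^eme d\'emarche que celle de l'article~: celui-ci d\'erive aussi $\psi\mapsto\det(1_\gB+X\psi)$ en composant avec la fonction affine $\psi\mapsto 1_\gB+X\psi$ et en appliquant le fait \ref{factAdjGrad}, puis identifie $\nabla(\rd k)(\psi)$ au coefficient de $X^{k-1}$ dans $\Adj(1_\gB+X\psi)$, qu'il calcule par division suivant les puissances croissantes de $\det(1_\gB+X\psi)\,1_\gB$ par $1_\gB+X\psi$ --- ce qui est exactement votre inversion de $\Id_F+Z\psi$ dans $\End(F)[[Z]]$. Les deux arguments ne diff\`erent que par la pr\'esentation.
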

\begin{proof}{Preuve du lemme \ref{lemAdjGrad2}}~\\
On pose $\gB=\cL(F,F)$ et on identifie $\gB[X]$ avec $\cL(F[X],F[X])$. 
On consid\`ere $\gB$ comme une $\A$-alg\`ebre et  $\gB[X]$ comme une $\A[X]$-
alg\`ebre. Nous d\'erivons la fonction 
$$\delta \,:\,\gB\rightarrow \A[X]\,:\, \psi \mapsto \det(1_\gB+X\psi)=1_\A+\rd 
1(\psi)\,X+\cdots +\rd m(\psi)\,X^m\,.
$$
Cette fonction est obtenue en composant la
fonction affine 
$$\gB\rightarrow\gB[X]\,:\,\psi \mapsto 1_\gB+X\psi
$$ 
et la fonction $\det:\gB[X]\rightarrow \A[X]$. 
Calculons cette diff\'erentielle au point $\psi$ pour un accroissement $\epsilon$. 
Nous obtenons l'\ali
$$\epsilon \mapsto \Tr(\Adj(1_\gB+X\psi)\,(X\,\epsilon))=
\Tr((X\,\Adj(1_\gB+X\psi))\,\epsilon)\,.
$$
L'\ali $\eta(X)=\Adj(1_\gB+X\psi)=1_\gB+\eta_1\,X+\cdots +\eta_{m-1} \,X^{m-1}$ 
(avec les $\eta_i\in\gB$) est donc \'egale \`a
$$ 1_\gB+\nabla(\rd2)(\psi)\,X+\cdots +\nabla(\rd{m})(\psi) \,X^{m-1}\,.
$$
Elle v\'erifie  $$\eta(X)\,(1_\gB+X\psi)=\det(1_\gB+X\psi)\,1_\gB=(1_\A+\rd 
1\,X+\cdots +\rd m\,X^m)\,1_\gB\,.$$
 On obtiendra donc
$\eta $ comme \elt de $\gB[X]$ en faisant dans  $\gB[X]$ la division
par puissances croissantes du \pol $1_\gB + \rd1\,X+\cdots +\rd n\,X^m\,$ par
$1_\gB+X\psi$ (le fait que la division est exacte, i.e., le reste est nul, fournit 
l'une des preuves usuelles du \tho de Cayley-Hamilton).
Et cela donne le r\'esultat annonc\'e.
\end{proof}
On d\'eduit enfin du lemme \ref{lemAdjGrad2} 
que l'\ali $\Adj^{(k)}_{\varphi\bul}(\varphi)$
est le gradient de la fonction 
$\varphi \mapsto \rd k(\varphi\bul\varphi)$. 
Nous devons en effet d\'eriver la fonction obtenue en composant la
fonction lin\'eaire $\varphi \mapsto \varphi\bul\varphi$ et la fonction $\rd k$: 
le gradient correspondant est bien $(\nabla(\rd k)(\varphi\bul \varphi 
))\,\varphi\bul$.
\end{proof}

Le \tho qui suit nous sera particuli\`erement utile dans la section
\ref{subsecCalPra}.

\begin{theorem} 
\label{thGENCRAM1} 
On a avec les notations pr\'ec\'edentes si $\varphi$ est de rang $\leq k$: 
\begin{eqnarray} \label{eqGENCRAM1}
\varphi \circ \Adj^{(k)}_{\varphi\bul}(\varphi) \circ \varphi& = & a_k\,\varphi .
\end{eqnarray}
\end{theorem}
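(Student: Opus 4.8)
The plan is to derive the identity as a single linear combination of the elementary Cramer identities already established in Section~\ref{subsecIdtCra1}. I would fix bases of $E$ and $F$, let $A$ be the matrix of $\varphi$ and $A\bul$ the matrix of $\varphi\bul$, and write $\mu_{\alpha,\beta}=\det(A_{\alpha,\beta})$ and $\mu\bul_{\beta,\alpha}=\det(A\bul_{\beta,\alpha})$ as in Notation~\ref{notaGenAdj}. Since $\varphi$ is of rank $\leq k$, the matrix $A$ is of rank $\leq k$, so for every pair $\alpha\in\cP_{k,m}$, $\beta\in\cP_{k,n}$ the Cramer identity (\ref{eqIGCram}) applies and gives $\mu_{\alpha,\beta}\,A = A\,\Adj_{\alpha,\beta}(A)\,A$.

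First I would multiply each of these identities by the scalar $\mu\bul_{\beta,\alpha}$ and sum over all $\alpha\in\cP_{k,m}$, $\beta\in\cP_{k,n}$. Because $\mu\bul_{\beta,\alpha}$ is a scalar it passes through the outer factors of $A$, so the summation yields
\[
\Bigl(\sum_{\alpha,\beta}\mu\bul_{\beta,\alpha}\,\mu_{\alpha,\beta}\Bigr)\,A
\;=\; A\,\Bigl(\sum_{\alpha,\beta}\mu\bul_{\beta,\alpha}\,\Adj_{\alpha,\beta}(A)\Bigr)\,A .
\]
Next I would identify the two sums with objects already named. By Lemma~\ref{propGramCroise} (the Binet--Cauchy expansion of $\rd k(\varphi\bul\varphi)$) the scalar factor on the left is exactly $a_k=\rd k(\varphi\bul\varphi)$; by the definition (\ref{eqDefAdjficirc}) the operator in the middle on the right is precisely $\Adj_{\varphi\bul,k}(\varphi)$. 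This gives the matrix identity $A\,\Adj_{\varphi\bul,k}(\varphi)\,A = a_k\,A$.

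Finally I would invoke Theorem~\ref{thGENCRAM3}, which asserts $\Adj_{\varphi\bul,k}(\varphi)=\Adj^{(k)}_{\varphi\bul}(\varphi)$. Substituting and reading the matrix equality back as an equality of \alis yields $\varphi\circ\Adj^{(k)}_{\varphi\bul}(\varphi)\circ\varphi = a_k\,\varphi$, which is exactly (\ref{eqGENCRAM1}).

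I expect essentially no genuine obstacle here, since the substantive work has been offloaded to Section~\ref{subsecIdtCra1} and to Theorem~\ref{thGENCRAM3}: the whole content is the observation that summing the single Cramer identity (\ref{eqIGCram}) against the complementary minors $\mu\bul_{\beta,\alpha}$ simultaneously produces the mixed Gram coefficient $a_k$ on the scalar side and the intrinsic operator $\Adj^{(k)}_{\varphi\bul}(\varphi)$ on the operator side. The only point requiring care is the index bookkeeping, namely pairing $\mu_{\alpha,\beta}$ (from $A$) with $\mu\bul_{\beta,\alpha}$ (from $A\bul$) in the correct order so that the same pairs $(\alpha,\beta)$ index both factors in agreement with Lemma~\ref{propGramCroise} and definition (\ref{eqDefAdjficirc}).
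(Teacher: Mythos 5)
Your proposal is correct and follows essentially the same route as the paper's own proof: the paper likewise derives $\varphi \circ \Adj_{\varphi\bul,k}(\varphi) \circ \varphi = a_k\,\varphi$ by summing the Cramer identities against the minors $\mu\bul_{\beta,\alpha}$ and then concludes with Theorem~\ref{thGENCRAM3}. The only (immaterial) difference is that the paper works with the hypothesis-free congruence (\ref{eqCGCram}) modulo $\cD_{k+1}(\varphi)$ and then uses that this ideal vanishes when $\varphi$ has rank $\leq k$, whereas you impose the rank hypothesis upfront and invoke the equality form (\ref{eqIGCram}) directly.
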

\begin{proof}{Preuve} 
Une cons\'equence imm\'ediate de l'\idt de Cramer (\ref{eqCGCram}) est:
\begin{eqnarray} \label{eqAdj1}
\varphi \circ \Adj_{\varphi\bul,k}(\varphi) \circ \varphi& \equiv & a_k\,\varphi 
\qquad \mathrm{mod}\quad \cD_{k+1}(\varphi).
\end{eqnarray}
On conclut par le \tho \ref{thGENCRAM3}.
\end{proof}

Un cas particulier de la formule (\ref{eqIg3}) qui suit est la formule 2.13 dans 
\cite{PB}.
La signification est qu'un \ing d'une \ali $\varphi$ 
calcul\'e en utilisant une
\ali $\varphi\bul$ crois\'ee avec $\varphi$ donne la solution du \sli 
correspondant $AX=V$ ($A$ est la matrice de $\varphi$) sous forme d'une moyenne 
pond\'er\'ee d'\idcs du type (\ref{eqGema}) page~\pageref{eqGema}.

\begin{theorem} 
\label{thGENCRAM2} 
Si $\varphi$ et $\varphi\bul$ sont \cros  de rang  $k$, alors
\begin{equation} \label{eqIg3}
\Ig(\varphi,\varphi\bul)=a_k^{-1}\,\Adj_{\varphi\bul,k}(\varphi)
\end{equation}
\end{theorem}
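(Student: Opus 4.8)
The plan is to verify that $\psi := a_k^{-1}\,\Adj_{\varphi\bul,k}(\varphi)$ satisfies the two projection conditions that characterize $\Ig(\varphi,\varphi\bul)$ in Theorem \ref{thCrIvg}, and then invoke the uniqueness clause there. Write $\Phi=\Adj_{\varphi\bul,k}(\varphi)$, which by Theorem \ref{thGENCRAM3} equals $\Adj^{(k)}_{\varphi\bul}(\varphi)$, set $g=\varphi\varphi\bul$, $h=\varphi\bul\varphi$, and put $R(t)=\sum_{j=1}^{k}(-1)^{j-1}a_{k-j}\,t^{j-1}$. Reading off (\ref{eqDefAdjficirc2}) one has directly $\Phi=R(h)\,\varphi\bul=\varphi\bul\,R(g)$, the second form obtained by pushing $\varphi\bul$ through each power via $(\varphi\bul\varphi)^{j-1}\varphi\bul=\varphi\bul(\varphi\varphi\bul)^{j-1}$. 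Hence the two composites are polynomials in a single endomorphism: $\varphi\,\Phi=\varphi\varphi\bul\,R(g)=g\,R(g)$ and $\Phi\,\varphi=R(h)\,\varphi\bul\varphi=R(h)\,h$, so $\varphi\psi=a_k^{-1}g\,R(g)$ depends only on $g$ and $\psi\varphi=a_k^{-1}R(h)\,h$ only on $h$.

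Next I feed $g$ into Theorem \ref{thCHam}. Since $\varphi$ and $\varphi\bul$ are crossed, Lemma \ref{lemCroCro} gives that $g=\varphi\varphi\bul$ is crossed with itself; moreover $\Im g=\Im\varphi$ is projective of rank $k$ by hypothesis and by (\ref{SDO2}). Thus $g$ meets condition 1 of Theorem \ref{thCHam}, so condition 3 holds: with $\rP g(Z)=1+\sum_\ell d_\ell Z^\ell$ the coefficient $d_k$ is invertible, and the symmetry $\det(\Id_F+Zg)=\det(\Id_E+Zh)$ identifies $d_k=\rd k(\varphi\varphi\bul)=\rd k(\varphi\bul\varphi)=a_k$. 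The endomorphism produced there is exactly $\pi_g:=d_{k-1}g-\cdots+(-1)^{k-1}g^k=g\,R(g)$, and it satisfies $\pi_g^2=a_k\,\pi_g$ and $\pi_g\,g=a_k\,g$. Consequently $\varphi\psi=a_k^{-1}\pi_g$ is idempotent.

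It remains to pin down its image and kernel. From $\pi_g\,g=a_k\,g$ the map $a_k^{-1}\pi_g$ is the identity on $\Im g=\Im\varphi$, while $\pi_g=g\,R(g)=R(g)\,g$ yields $\Im\pi_g\subseteq\Im g$ and $\Ker g\subseteq\Ker\pi_g$; hence $\Im(\varphi\psi)=\Im\varphi$ and $\Ker\varphi\bul=\Ker g\subseteq\Ker(\varphi\psi)$, the first equality of these kernels coming from (\ref{SDO2}). Comparing the idempotent splitting $F=\Im\varphi\oplus\Ker(\varphi\psi)$ with the crossing splitting $F=\Im\varphi\oplus\Ker\varphi\bul$ and using the inclusion just found forces $\Ker(\varphi\psi)=\Ker\varphi\bul$, so $\varphi\psi$ is the projection onto $\Im\varphi$ parallel to $\Ker\varphi\bul$. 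The symmetric argument applied to $h=\varphi\bul\varphi$ — crossed with itself by Lemma \ref{lemCroCro}, with $\Im h=\Im\varphi\bul$ projective of rank $k$ (via the isomorphism $\Im\varphi\bul\simeq\Im\varphi$) and $\Ker h=\Ker\varphi$ by (\ref{SDO2}) — shows $\psi\varphi=a_k^{-1}R(h)\,h$ is the projection onto $\Im\varphi\bul$ parallel to $\Ker\varphi$. By the uniqueness in Theorem \ref{thCrIvg}, $\psi=\Ig(\varphi,\varphi\bul)$, which is the claim; note that $\varphi\psi\varphi=\varphi$ is also recovered directly from Theorem \ref{thGENCRAM1}, as a consistency check.

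The main obstacle is the bookkeeping of the kernel equality: one must upgrade $\Ker g\subseteq\Ker(\varphi\psi)$ to an equality, which rests only on the elementary fact that two complements of the same direct summand, one contained in the other, must coincide. Everything else is formal — the polynomial factorizations of $\Phi$ in $g$ and $h$, the already-established identities (\ref{SDO2}) and Lemma \ref{lemCroCro}, and the rank-$k$ Cayley–Hamilton statement of Theorem \ref{thCHam} — with the single genuinely external input being the Sylvester symmetry $\rd k(\varphi\varphi\bul)=\rd k(\varphi\bul\varphi)$ guaranteeing $d_k=a_k$.
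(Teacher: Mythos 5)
Your proof is correct, and in substance it rests on the same two pillars as the paper's: Theorem \ref{thGENCRAM3}, which identifies $\Adj_{\varphi\bul,k}(\varphi)$ with $\Adj^{(k)}_{\varphi\bul}(\varphi)$, and the rank-$k$ Cayley--Hamilton mechanism. The organizational difference is real, though: the paper's proof is two lines, because it simply cites formula (\ref{eqIg2}) of Theorem \ref{thIgRangr} --- which is verbatim the statement $\Ig(\varphi,\varphi\bul)=a_k^{-1}\,\Adj^{(k)}_{\varphi\bul}(\varphi)$ --- and then concludes by Theorem \ref{thGENCRAM3}. You never invoke Theorem \ref{thIgRangr}; instead you re-derive its content by applying Theorem \ref{thCHam} to the composites $g=\varphi\varphi\bul$ and $h=\varphi\bul\varphi$ (the hypotheses being supplied by Lemma \ref{lemCroCro} and the identities (\ref{SDO2})), and you then identify $\varphi\psi$ and $\psi\varphi$ as the two projections demanded by the uniqueness clause of Theorem \ref{thCrIvg}. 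In effect you have written out a proof of Theorem \ref{thIgRangr} itself, and one that differs from the proof the paper sketches for that theorem (``using Cayley--Hamilton as for Theorem \ref{thCHam}'', i.e.\ redoing the direct-sum/restriction argument): you use Theorem \ref{thCHam} as a black box on the composite endomorphisms. What your route buys is a self-contained argument that makes explicit how the crossed-pair formula reduces to the self-crossed case; what the paper's buys is brevity, since Theorem \ref{thIgRangr} is already on the shelf. One simplification you missed: the statement of Theorem \ref{thCHam} already asserts that $a_{k-1}g-a_{k-2}g^2+\cdots+(-1)^{k-1}g^k$ equals $a_k$ times \emph{the} projection onto $\Im\,g$ parallel to $\Ker\,g$; combined with $\Im\,g=\Im\,\varphi$ and $\Ker\,g=\Ker\,\varphi\bul$ from (\ref{SDO2}), this yields condition 1 of Theorem \ref{thCrIvg} at once, so your idempotency computation and the two-complements argument upgrading $\Ker\,\varphi\bul\subseteq\Ker(\varphi\,\psi)$ to an equality can be dispensed with.
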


\begin{proof}{Preuve}
Si  $\varphi$ et $\varphi\bul$ sont \cros de rang $k$ on obtient en appliquant la 
formule~(\ref{eqIg2}) page~\pageref{eqIg2} l'\egt 
$$\Ig(\varphi,\varphi\bul)=a_k^{-1}\, \Adj^{(k)}_{\varphi\bul}(\varphi).$$ 
On conclut par le \tho \ref{thGENCRAM3}.
\end{proof}

\medskip 
Notons cependant que pour le calcul de l'\ing ce n'est pas la formule 
(\ref{eqDefAdjficirc}) qui peut servir en pratique, mais plut\^{o}t la formule 
(\ref{eqDefAdjficirc2}).

\section{Modules \ptfs} 
\label{subsecPTF}
Cette section r\'esume un certain nombre de r\'esultats plus ou moins classiques. 
On trouve la plupart d'entre eux tr\`es bien expos\'es dans \cite{Nor}.
Pour des preuves  enti\`erement constructives ont peut consulter~\cite{LQ}. 
Rappelons qu'un module est dit \emph{\ptfz} s'il est isomorphe \`a un facteur 
direct dans un \Amo libre de dimension finie.

\subsection{Id\'eaux de Fitting  et \alis \lnls} 

Si $G\in\A^{m\times n}$, le module $\Coker(G)$ est dit \emph{\pfz}. 
Plus \gnlt on dit que la matrice $G$ est  \emph{une pr\'esentation d'un module 
$M$} si on a des \gtrs $g_1,\ldots ,g_m$ de $M$ et si l'application
$\A^m\rightarrow M$ qui envoie la base canonique sur les $g_i$ identifie
$M$ et $\Coker\,G$, \cade si les colonnes de $G$ engendrent le module des 
relations entre les $g_i$.

\begin{definition} \label{defIdeFit} Si $G$ est une matrice de pr\'esentation d'un 
module $M$ donn\'e par $m$ \gtrs li\'es par $n$ relations, les {\em \idfs du 
module} $M$ sont les \ids 
$$\cF_k(M):= \cD_{m-k}(G)$$
o\`u $k$ est un entier arbitraire. Ces \ids ne d\'ependent que de $M$ et non de la 
pr\'esentation choisie pour $M$.
\end{definition}

\begin{definition} 
\label{defEcm} 
Des \elts $x_1,\ldots ,x_\ell$ de $\A$ sont dit \emph{\comz} s'ils engendrent $\A$ 
comme id\'eal, \cad si une \coli des $x_i$ est \'egale \`a $1$. On dit encore que 
le vecteur  $(x_1,\ldots ,x_\ell)$ est \emph{unimodulaire} et que le \pol 
$P(Z)=\sum_kx_kZ^{k-1}$ est \emph{primitif}.
\end{definition}

Un but essentiel de l'article pr\'esent est de r\'ealiser avec un petit nombre 
d'\ops \elrs les \'equivalences annonc\'ees dans le \tho suivant.
Cela peut \^{e}tre compris comme donnant une solution uniforme et en temps 
raisonnable pour les \slis suffisamment \gui{bien conditionn\'es},
\`a l'image de ce que fait l'analyse num\'erique matricielle au moyen des 
d\'ecompositions en valeurs singuli\`eres (SVD) et des \iMPsz. 

Ce \tho est pour l'essentiel dans \cite{Nor} 
(lemme 1 page 8, exercice 7 page 49 et \tho 18 page 122)
et dans \cite{Bha} (voir aussi \cite{LQ}). La preuve dans \cite{Bha} est 
compl\`etement explicite,
contrairement \`a celle dans \cite{Nor}. 
De mani\`ere un peu surprenante, \cite{Bha} le fait remonter \`a \ldots 1994!

\begin{theorem} 
\label{theoremIFD}
Soit une \ali $\varphi:\A^n\rightarrow \A^m$  et $A$ sa matrice sur les bases 
canoniques. \Propeq 
\begin{enumerate}
\item \label{a} $\Im\,\varphi$  est   facteur direct dans  $\A^m$.
\item \label{b}  $\Coker\,\varphi$  est  un \mptfz.
\item \label{c}  $\Im\,\varphi$  est   facteur direct dans  $\A^m$,  
$\Ker\,\varphi$  est   facteur direct dans  $\A^n$ et si $H$ est un 
suppl\'ementaire de $\Ker\,\varphi$, $\varphi$ r\'ealise un \iso de $H$ sur 
$\Im\,\varphi$.
\item \label{d} Il existe  $\varphi\bul :\A^m\rightarrow \A^n$   telle que 
$\A^n=\Ker\,\varphi\oplus\Im\,\varphi\bul $ et  $\A^m=\Ker\,\varphi\bul 
\oplus\Im\,\varphi$.
\item \label{e} Il existe  $\psi:\A^m\rightarrow \A^n$   v\'erifiant
$\varphi \circ \psi \circ\varphi =\varphi$.
\item \label{f} Il existe  $\psi:\A^m\rightarrow \A^n$   v\'erifiant
$\varphi \circ \psi\circ\varphi =\varphi$ et $\psi \circ \varphi \circ \psi =\psi 
$.
\item \label{g} Chaque \idd $\cD_k(\varphi)$ est \idmz.
\item \label{h} Chaque \idd $\cD_k(\varphi)$ est engendr\'e par un \idm $e_k$. 
Soit alors $r_k=e_k-e_{k+1}$. Les $r_k$ forment un syst\`eme fondamental d'\idms 
\ortsz. 
Pour tout mineur $\mu$ d'ordre $k$ de $A$,
sur le localis\'e $\A[1/(r_k\,\mu)]$  l'\ali $\varphi$ devient \nl de rang $k$.
\item \label{i} L'\ali $\varphi$ devient \nl apr\`es localisation en des \elts 
$x_i$ \comz.
\item \label{j} L'\ali $\varphi$ devient \nl apr\`es localisation en
n'importe quel \id maximal.
\end{enumerate}
\end{theorem}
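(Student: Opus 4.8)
Le plan est de s\'eparer les dix propri\'et\'es en deux blocs que je traiterais d'abord isol\'ement, avant de les raccorder par deux ponts. Le premier bloc, de nature \gui{module}, regroupe \ref{a}--\ref{f}; le second, de nature \gui{\idds et localisation}, regroupe \ref{g}--\ref{j}.

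Pour le premier bloc, les implications sont pour l'essentiel formelles. L'\'equivalence \ref{a}$\Leftrightarrow$\ref{b} tient \`a ce que $\Im\,\varphi$ est facteur direct dans $\A^m$ \ssi la surjection canonique $\A^m\to\Coker\,\varphi$ se scinde, \cad \ssi $\Coker\,\varphi$ est \proz. Pour \ref{a}$\Rightarrow$\ref{c} j'utiliserais que $\Im\,\varphi$, facteur direct d'un libre, est \proz: la suite exacte $0\to\Ker\,\varphi\to\A^n\to\Im\,\varphi\to0$ se scinde alors et fournit un suppl\'ementaire $H$ de $\Ker\,\varphi$ sur lequel $\varphi$ induit un \iso vers $\Im\,\varphi$. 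De \ref{c} je tirerais \ref{d} en posant $\varphi\bul$ \'egal \`a l'inverse de cet \iso sur $\Im\,\varphi$ et \`a $0$ sur un suppl\'ementaire de $\Im\,\varphi$; comme \ref{d} affirme exactement que $\varphi$ et $\varphi\bul$ sont \cros (\'equation (\ref{eqSDO})), le \tho \ref{thCrIvg} livre $\psi=\Ig(\varphi,\varphi\bul)$ v\'erifiant \ref{f}. Enfin \ref{f}$\Rightarrow$\ref{e} est imm\'ediat, et \ref{e}$\Rightarrow$\ref{a} vient de ce que $\varphi\psi$ est alors idempotent d'image $\Im\,\varphi$, donc $\Im\,\varphi$ est facteur direct. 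Le cycle \ref{a}$\Rightarrow$\ref{c}$\Rightarrow$\ref{d}$\Rightarrow$\ref{f}$\Rightarrow$\ref{e}$\Rightarrow$\ref{a} et \ref{a}$\Leftrightarrow$\ref{b} bouclent ce bloc.

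Pour le second bloc, \ref{g}$\Rightarrow$\ref{h} repose sur le \gui{d\'eterminant trick} (lemme \ref{lemIdIdm}): chaque $\cD_k(\varphi)$, \idm et \tfz, est engendr\'e par un \idm $e_k$; la cha\^{\i}ne $\cD_0=\gen{1}\supseteq\cD_1\supseteq\cdots$ donne $e_0=1$, $e_{k+1}e_k=e_{k+1}$ et $e_k=0$ pour $k>\min(m,n)$, donc les $r_k=e_k-e_{k+1}$ forment un sfio; on v\'erifie $r_ke_k=r_k$ et $r_ke_{k+1}=0$, d'o\`u $e_k=1$ et $e_{k+1}=0$ sur $\A[1/r_k]$, et le lemme de la libert\'e \ref{lemLib} rend $\varphi$ \nl de rang $k$ apr\`es inversion d'un mineur $\mu$ d'ordre $k$. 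Pour \ref{h}$\Rightarrow$\ref{i} je prendrais la famille des $r_k\mu$ (sur tous les $k$ et tous les mineurs $\mu$ d'ordre $k$): elle est \com car $1=\sum_kr_k$ et $r_k\in r_k\cD_k(\varphi)=\gen{r_k\mu}_\mu$. Pour \ref{i}$\Rightarrow$\ref{j}, en un \id maximal $\mathfrak m$ l'un des $x_i$ \'echappe \`a $\mathfrak m$ (ils sont \comz) et la simplicit\'e se conserve par localisation suppl\'ementaire. Enfin \ref{j}$\Rightarrow$\ref{g}: l'idempotence de $\cD_k(\varphi)$ est locale (le module \tf $\cD_k/\cD_k^2$ est nul \ssi il l'est en chaque $\mathfrak m$) et en chaque $\mathfrak m$ o\`u $\varphi$ est \nlz, $\cD_k(\varphi)$ vaut $0$ ou $\A_{\mathfrak m}$, donc est \idmz.

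Il reste \`a raccorder les deux blocs par deux ponts. Le premier est \ref{e}$\Rightarrow$\ref{g}: de $A\,B\,A=A$ la formule de Binet-Cauchy donne $\cD_k(A)=\cD_k(ABA)\subseteq\cD_k(A)\,\cD_k(B)\,\cD_k(A)\subseteq\cD_k(A)^2\subseteq\cD_k(A)$, donc chaque $\cD_k(\varphi)$ est \idmz. Le second est \ref{h}$\Rightarrow$\ref{e}: sur chaque $\A[1/r_k]$, $\varphi$ est de rang $\leq k$ avec $\cD_k=\gen{1}$, de sorte que la proposition \ref{propIGCram} fournit $B^{(k)}$ v\'erifiant $A\,B^{(k)}\,A=A$; via l'\iso $\A\simeq\prod_k\A[1/r_k]$ attach\'e au sfio $(r_k)$, le recollement des $B^{(k)}$ produit $B$ sur $\A$ avec $A\,B\,A=A$. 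Ces deux ponts ferment l'ensemble, puisque \ref{e}$\Rightarrow$\ref{g}$\Rightarrow\cdots\Rightarrow$\ref{h}$\Rightarrow$\ref{e} relie les deux cycles. L'obstacle principal me para\^{\i}t \^{e}tre \ref{g}$\Rightarrow$\ref{h}, o\`u il faut combiner le \gui{d\'eterminant trick}, le maniement de la cha\^{\i}ne d'\idms $(e_k)$ et le lemme de la libert\'e, ainsi que le recollement de \ref{h}$\Rightarrow$\ref{e} qui convertit une donn\'ee purement locale en un \ing global; il faut de plus noter que \ref{j}, reposant sur l'existence d'\ids maximaux, est de nature classique, si bien que les implications qui le font intervenir utilisent le principe local-global usuel.
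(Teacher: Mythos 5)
Your proof is classically correct, and most of its skeleton coincides with the paper's own sketch: items \ref{a}--\ref{d} are related by the same natural module-theoretic arguments, \ref{d}$\Rightarrow$\ref{f} goes through the crossed-maps theorem \ref{thCrIvg}, \ref{g}$\Rightarrow$\ref{h} through the determinant trick and the freeness lemma \ref{lemLib}, \ref{h}$\Rightarrow$\ref{i}$\Rightarrow$\ref{j} is trivial, and your recollement proving \ref{h}$\Rightarrow$\ref{e} is precisely the paper's passage from \ref{h} to \ref{f} by the Cramer identity (\ref{eqIGCram}), formula (\ref{eqIGCra2}). One of your two bridges is in fact an improvement on the text: the paper's sketch states no implication at all from the block \ref{a}--\ref{f} into the block \ref{g}--\ref{j} (for this it defers to Northcott and Bhaskara Rao), whereas your \ref{e}$\Rightarrow$\ref{g} via Binet--Cauchy, $\cD_k(A)=\cD_k(ABA)\subseteq\cD_k(A)\,\cD_k(B)\,\cD_k(A)\subseteq\cD_k(A)^2\subseteq\cD_k(A)$, closes that loop explicitly and correctly, so on this point your proposal is more self-contained than the paper.

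The other difference works against you, relative to the paper's constructive standards. You re-enter the first block from \ref{i} only through \ref{j}, and your \ref{j}$\Rightarrow$\ref{g} needs maximal ideals plus the local-global principle for finitely generated modules, hence the axiom of choice. The paper instead proves \ref{i}$\Rightarrow$\ref{f} constructively: $ABA=A$ is a \emph{linear} equation in the unknown $B$, solvable over each $\A[1/x_i]$ since $A$ is simple there, and local solutions of a linear system over comaximal localizations patch — clearing denominators gives matrices $C_i$ over $\A$ with $A\,C_i\,A=x_i^{k_i}A$, the $x_i^{k_i}$ are still comaximal, and if $\sum_i b_ix_i^{k_i}=1$ then $B=\sum_i b_iC_i$ satisfies $ABA=A$. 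This is exactly what lets the paper assert that \ref{j} is \gui{\`a part}, all other equivalences being constructive; with your wiring, \ref{i} can be brought back to the rest only classically. The repair costs nothing: apply the same kind of gluing you already use for \ref{h}$\Rightarrow$\ref{e}, but starting from the comaximal family of \ref{i} rather than from the sfio, and the cycle closes constructively without ever passing through \ref{j}.
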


En particulier un \mpf est \pro \ssi ses \idfs sont \idmsz.
Le point \ref{j} est \`a part, car il n'implique les autres qu'avec l'aide de 
l'axiome du choix. Les autres \'equivalences sont \covsz.

Les \'equivalences \ref{a}$\;\Leftrightarrow\;$\ref{b}
$\;\Leftrightarrow\;$\ref{c}$\;\Leftrightarrow\;$
\ref{d} sont naturelles.

Pour passer de \ref{d} \`a \ref{e} on constate que les restrictions
de $\varphi$ et $\varphi\bul $ aux sous-modules $\Im\,\varphi$ et  
$\Im\,\varphi\bul $ sont des \isosz. Nous donnerons un calcul \gui{rapide} de 
$\psi$ \`a partir de $\varphi$ et $\varphi\bul$ dans la section~\ref{subsecIVGN}. 

Pour passer de \ref{e} \`a \ref{f} on remarque que si $\psi_1$ v\'erifie \ref{e}
alors  $\psi=\psi_1\circ \varphi \circ \psi_1$ v\'erifie~\ref{f}.

Dans les conditions du \ref{f}, $\varphi \circ \psi$ est la \prn sur
$\Im\,\varphi$ \paralm \`a $\Ker\,\psi$ et $\psi \circ\varphi$ est la \prn sur 
$\Im\,\psi$ \paralm \`a $\Ker\,\varphi$.

Le point \ref{g} implique le point \ref{h} de mani\`ere imm\'ediate en tenant 
compte du lemme \ref{lemLib}. Le point \ref{h} implique trivialement le point 
\ref{i} et celui-ci  implique trivialement le point~\ref{j}. 

Pour montrer que \ref{i} implique \ref{f} on consid\`ere
l'\egt $ABA=A$ (o\`u $A$ et $B$ sont des matrices pour
$\varphi$ et $\psi$) comme une \'equation o\`u l'inconnue est $B$: elle est facile 
\`a r\'esoudre dans le cas o\`u $A$ d\'efinit une \ali \nlz, donc elle est 
r\'esolue localement. Il reste \`a recoller les solutions en utilisant la \coli 
des $x_i$ \'egale~\`a~1. 

On peut aussi passer assez directement de  
\ref{h} \`a \ref{f} gr\^{a}ce \`a 
l'\idt de Cramer (\ref{eqIGCram}).   Si on a 
$$\sum_{\alpha\in \cP_{k,m},\beta\in \cP_{k,n}} 
c_{\alpha,\beta}\,\mu_{\alpha,\beta}=e_k\,,$$ 
alors $\sum_{\alpha\in \cP_{k,m},\beta\in \cP_{k,n}} 
c_{\alpha,\beta}\,\mu_{\alpha,\beta}\,r_k=r_k$
et puisque la matrice est de rang $\leq k$ sur $\A[1/r_k]$ l'\idt
(\ref{eqIGCram}) fonctionne. Il suffit alors de poser:
\begin{equation} \label{eqIGCra2}
B \;=\;
\sum_k\,\left(\sum\nolimits_{\alpha\in \cP_{k,m},\beta\in \cP_{k,n}} 
r_k\,c_{\alpha,\beta}\,
\Adj_{\alpha,\beta}(A) \right)\,.
\end{equation}

Les matrices qui v\'erifient les propri\'et\'es du \tho
\ref{theoremIFD} sont celles qui d\'efinissent les \gui{meil\-leurs} \slisz:
ceux pour lesquels on peut exprimer une solution du \sli comme
une fonction lin\'eaire du second membre. Ce sont aussi les syst\`emes pour 
lesquels on a une bonne description de l'image et du noyau, aussi bien du point de 
vue direct que du point de vue dual.
Ces matrices, d\'ej\`a intensivement \'etudi\'ees dans \cite{Nor} sont dites 
\gui{regular} dans \cite{Bha} et \cite{Pra}. 
Cette terminologie remonte \`a Von Neuman, qui a \'etudi\'e les anneaux (non 
n\'ecessairement commutatifs) dans lesquels tous les \elts poss\`edent un 
\gui{\ingz}: il a utilis\'e pour cela le terme d'\elt r\'egulier.
L'ennui est que pour une matrice carr\'ee, \gui{r\'eguli\`ere} signifie
inversible dans la terminologie courante. 

Nous proposons la terminologie suivante, que nous \'etendrons  au cas des \mptfsz.
\begin{definition} 
\label{defLocNl} 
Une \ali $\varphi$ entre modules libres de dimensions finies qui v\'erifie les 
conditions \'equivalentes du \tho  \ref{theoremIFD} sera appel\'ee une \ali 
\emph{\lnlz}. Si $\cD_k(\varphi)=\gen{1}$  et 
 $\cD_{k+1}(\varphi)=\gen{0}$  on dira qu'il s'agit d'une \emph{\ali (\lnlz) de 
rang $k$}. Si on ne pr\'ecise pas le rang mais qu'il existe, on dit 
qu'il s'agit d'une \emph{\ali de rang constant}. Une \emph{matrice \lnl (resp. de 
rang constant)} est la matrice d'une \ali \lnl (resp. de rang constant).
\end{definition}

 Quand il existe, le rang d'une \ali \lnl est bien d\'efini d\`es que l'anneau 
n'est pas trivial.

Sur un anneau sans autre \idm que 1 et 0 toute matrice \lnl est de rang constant.
\subsection{Rang d'un \mptf} 

\begin{definition} 
\label{notaDk} Soit un \mptf $E$ engendr\'e par  $n$ \eltsz, et $\psi$ 
un \endo de $E$.
\begin{enumerate}
\item Le \deter de $\psi$, si $E\oplus N\simeq \A^n,$ est d\'efini par $\det\,\psi 
:=\det(\psi\oplus \Id_N)$.
\item Nous notons
 $$ \rP\psi(X)=\det(\Id_E+X\psi)=
1+\rd 1(\psi)X+\cdots+ \rd n(\psi)X^n
$$
En particulier $\rd 1(\psi)=\Tr(\psi)$ est appel\'e la trace de $\psi$. On pose 
aussi  $\rd0(\psi)=1$ et
pour $p>n$, $\rd{p}(\psi)=0$. 
\item Le \polcar de $\psi$ sera not\'e
$\rC\psi(X)=\det(X\Id_E-\psi)$. 
\item On note 
$\Gamma_{\psi}(X)$ 
le \pol d\'efini par 
$\;\rC{-\psi}(-X)=-X\Gamma_{\psi}(X)+\det(\psi)$, et   $\Gamma_{\psi}(\psi)$ 
s'appelle \emph{l'\endo cotranspos\'e de $\psi$}. Nous le notons $\wt{\psi}$ ou 
$\Adj\,\psi$.
\item Le  \deter de la multiplication par $X$ sur le module $E$, not\'e  
$\rR{E}(X),$
est appel\'e le \gui{\pol multiplicatif du module}.
\end{enumerate}
\end{definition}

Dans la \dfn pr\'ec\'edente il est sous-entendu que le d\'eterminant est \gui{bien 
d\'efini}: il ne d\'epend ni de l'entier $n$ ni de la d\'ecomposition $E\oplus 
N\simeq \A^n$.

L'\endo $\psi$  est inversible \ssi $\det\,\psi$ est inversible. 
Le \tho de Cayley-Hamilton est valable pour les \mptfsz.

Si un anneau poss\`ede des \idms un \mptf n'a pas forc\'ement un rang bien 
d\'efini. C'est le \gui{\pol multiplicatif} qui remplace le rang.

\begin{theorem} 
\label{propPTFDec} 
Soit $E$ un \mptf isomorphe \`a l'image d'une matrice de projection
$P\in\A^{m\times m}$ (de sorte que $\I_m-P$ est une matrice de pr\'esentation de 
$E$). 
\begin{enumerate}
\item Le \pol $\rR{E}(X)=\sum_ir_iX^i$ est \'egal \`a $\det(\I_m+(X-1)P)$. C'est 
un
\pol \emph{multiplicatif}: il
v\'erifie $\rR{E}(XY)=\rR{E}(X)R_E(Y)$  et $\rR{E}(1)=1$. Cela signifie que les
$r_i$  forment un syst\`eme fondamental d'\idms orthogonaux (sfio).
On a $r_0=\det(\I_m-P)$ et l'id\'eal $\gen{r_0}$ est l'annulateur de~$E$.
\item  Le module est dit de rang $k$ si $\rR{E}(X)=X^k$. Il est dit de rang $\leq 
k$ si $r_{k+1}=\ldots =r_m=0$ \cad encore si tous les mineurs d'ordre $k+1$ de la 
matrice $P$ sont nuls. S'il a un rang $k$ le module $E$ est dit de rang constant.
Quand il existe, le rang d'un \mptf est bien d\'efini d\`es que l'anneau n'est pas 
trivial.
\item Le localis\'e $\A_{r_k}=\A[1/r_k]$ est isomorphe \`a  
$\aqo{\A}{1-r_k}$. Le  localis\'e $E_{r_k}$ est isomorphe 
au sous-module $r_kE.$  Il est de rang $k$ en tant que  
$\A_{r_k}$-module. Le module
$E$ est somme directe des \gui{composantes} $r_kE$ ($k>0$).
\item Les \idfs $\cF_k(E)$ sont li\'es aux \idms $r_k$ d\'efinis via le \pol 
multiplicatif $\rR{E}$ par la relation:
 $\cF_k(E)=\gen{\sum_{\ell\leq k}r_\ell}$.
\item Les $m\choose k$ mineurs diagonaux d'ordre $k$ de $P$ sont les 
$\mu_{\alpha,\alpha}=\mu_{\alpha,\alpha}(P)$ pour $\alpha\in\cP_{k,m}$ et on a 
$\rd k(P)=\sum_{\alpha \in\cP_{k,m}}\mu_{\alpha ,\alpha }$.
Alors $r_k\,\rd k(P)=r_k$ 
et, pour chaque $\alpha\in\cP_{k,m} $, le module $E$ devient libre de rang $k$ 
lorsqu'on localise en $r_k\,\mu_{\alpha ,\alpha }$.
\end{enumerate}
\end{theorem}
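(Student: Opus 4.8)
Le plan est de partir de la formule du point 1, dont tout le reste d\'ecoule. D'abord je remarque que la matrice $\I_m+(X-1)P$, vue sur $\A[X]$, repr\'esente l'\endo de $\A^m=\Im\,P\oplus\Ker\,P$ qui vaut $X\,\Id$ sur $\Im\,P=E$ et $\Id$ sur $\Ker\,P$: pour $v\in\Im\,P$ on a $Pv=v$ donc $(\I_m+(X-1)P)v=Xv$, et pour $v\in\Ker\,P$ on a $Pv=0$ donc $(\I_m+(X-1)P)v=v$. Par la d\'efinition \ref{notaDk} du \deter d'un \endo d'un \mptf (extension par l'identit\'e sur un suppl\'ementaire), on obtient directement $\rR{E}(X)=\det(X\,\Id_E)=\det(\I_m+(X-1)P)$. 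La multiplicativit\'e $\rR{E}(XY)=\rR{E}(X)\rR{E}(Y)$ et $\rR{E}(1)=1$ r\'esultent alors de la multiplicativit\'e du \deter sur $E$, appliqu\'ee \`a $(XY)\Id_E=(X\Id_E)(Y\Id_E)$.

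En posant $\rR{E}(X)=\sum_i r_iX^i$ et en identifiant les \coes de $X^iY^j$ dans $\rR{E}(XY)=\rR{E}(X)\rR{E}(Y)$, on trouve $r_ir_j=0$ pour $i\neq j$ et $r_i^2=r_i$, tandis que $\rR{E}(1)=1$ donne $\sum_i r_i=1$: les $r_i$ forment donc un sfio, et $r_0=\rR{E}(0)=\det(\I_m-P)$. Pour l'annulateur, l'\idc $\Adj(\I_m-P)\,(\I_m-P)=r_0\,\I_m$ multipli\'ee \`a droite par $P$ donne $r_0P=0$ (car $(\I_m-P)P=0$), d'o\`u $r_0E=0$ et $\gen{r_0}\subset\mathrm{Ann}(E)$; l'\'egalit\'e sera obtenue en v\'erifiant, sur chaque composante du sfio, que $\mathrm{Ann}(E)$ est nul l\`a o\`u $E$ est libre de rang $\geq 1$.

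Le point clef est de relier ces \idms \`a la structure locale de $E$. Comme $\Im\,P$ est facteur direct, la matrice $P$ est \lnl et le \tho \ref{theoremIFD} s'y applique: chaque $\cD_k(P)$ est engendr\'e par un \idm $e_k$, et en posant $r'_k=e_k-e_{k+1}$, le point \ref{h} du \tho \ref{theoremIFD} (via le lemme \ref{lemLib}) montre qu'apr\`es localisation en $r'_k\,\mu$ (o\`u $\mu$ est un mineur d'ordre $k$ de $P$) la matrice $P$ devient \nl de rang $k$; alors $E=\Im\,P$ est libre de rang $k$ et $\rR{E}(X)=\det(X\,\Id_E)=X^k$ sur ce localis\'e. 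Comme les mineurs d'ordre $k$ engendrent $\gen{1}$ sur $\A[1/r'_k]$, on a $\rR{E}(X)=X^k$ sur toute cette composante, soit $r_k=1$ et $r_i=0$ ($i\neq k$). Les deux sfio co\"{\i}ncidant composante par composante, on conclut $r_k=r'_k$ pour tout $k$, donc $\cD_k(P)=\gen{e_k}=\gen{\sum_{j\geq k}r_j}$ et $E$ est libre de rang $k$ sur $\A[1/r_k]$.

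Tout le reste se v\'erifie alors composante par composante. Le point 2 (rang $\leq k$ \'equivalent \`a $r_{k+1}=\cdots=r_m=0$, \'equivalent \`a $\cD_{k+1}(P)=0$) vient de $e_{k+1}=\sum_{j>k}r_j$ et de l'orthogonalit\'e, la bonne d\'efinition du rang sur un anneau non trivial \'etant imm\'ediate. Le point 3 utilise $\A[1/r_k]\simeq\aqo{\A}{1-r_k}$ et $E_{r_k}\simeq r_kE$ pour un \idm $r_k$, la d\'ecomposition $E=\bigoplus_{k\geq 1}r_kE$ r\'esultant de $\sum_k r_k=1$ et $r_0E=0$. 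Le point 4 s'obtient en comparant $\cF_k(E)=\cD_{m-k}(\I_m-P)$ et $\gen{\sum_{\ell\leq k}r_\ell}$ sur chaque composante, o\`u $\I_m-P$ est de rang $m-k$. Enfin, au point 5, la formule $\rd k(P)=\sum_{\alpha\in\cP_{k,m}}\mu_{\alpha,\alpha}$ est l'expression classique du \coe de $X^k$ dans $\det(\I_m+XP)$; l'\egt $r_k\,\rd k(P)=r_k$ vient de $\rd k(P)=1$ sur $\A[1/r_k]$, et la libert\'e de rang $k$ apr\`es localisation en $r_k\,\mu_{\alpha,\alpha}$ r\'esulte du lemme \ref{lemLib} (rang $\leq k$ car $\cD_{k+1}(P)=0$ sur $\A[1/r_k]$, et $\mu_{\alpha,\alpha}$ inversible). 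Le principal obstacle sera de justifier proprement cette identification des deux familles d'\idms et le recollement local-global des \'egalit\'es d'\idsz; ce pont \'etabli, les cinq points se r\'eduisent \`a de la comptabilit\'e sur les composantes du sfio.
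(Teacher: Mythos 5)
Premi\`ere remarque: l'article ne d\'emontre pas ce \thoz. Il appara\^{\i}t dans la section \ref{subsecPTF}, pr\'esent\'ee comme un r\'esum\'e de r\'esultats classiques dont les preuves sont renvoy\'ees \`a \cite{Nor} et \cite{LQ}; votre proposition doit donc \^{e}tre jug\'ee pour elle-m\^{e}me, et son architecture est correcte. Le point 1 se lit effectivement sur la d\'efinition \ref{notaDk} (la multiplication par $X$, \'etendue par l'identit\'e sur $\Ker\,P$, a pour matrice $\I_m+(X-1)P$), le sfio sort de l'identification des \coes dans $\rR{E}(XY)=\rR{E}(X)\,\rR{E}(Y)$, et le pont avec le point \ref{h} du \tho \ref{theoremIFD} appliqu\'e \`a $P$ est l\'egitime ($\Im\,P$ est facteur direct dans $\A^m$, et ce \tho ne s'appuie pas sur le pr\'esent \'enonc\'e, donc pas de circularit\'e); l'identification $r_k=r'_k$ des deux sfio et les recollements que vous invoquez (\'egalit\'es d'\elts ou d'\itfs test\'ees sur des localisations comaximales) sont des arguments standards qui passent sans difficult\'e.

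En revanche une affirmation est fausse telle quelle: \gui{$E$ est libre de rang $k$ sur $\A[1/r_k]$}. Ce que votre argument \'etablit est que $\rR{E}(X)=X^k$ sur $\A[1/r_k]$, \cad que $E_{r_k}$ est \pro de rang constant $k$; un tel module n'est pas libre en g\'en\'eral, et la libert\'e n'est obtenue qu'apr\`es les localisations suppl\'ementaires en les $r_k\,\mu_{\alpha,\alpha}$ --- c'est pr\'ecis\'ement le contenu du point 5, que vous traitez d'ailleurs correctement via le lemme \ref{lemLib}. L'erreur est localis\'ee et r\'eparable: pour la r\'eciproque concernant l'annulateur (fin du point 1) et pour le point 4, il faut descendre aux localisations comaximales plus fines $\A[1/(r_k\,\mu_{\alpha,\alpha})]$, o\`u $E$ (resp. $\I_m-P$) est r\'eellement libre, y v\'erifier l'\'enonc\'e, puis recoller. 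Au point 4, attention aussi aux indices: sur la composante $\A[1/r_j]$ la matrice $\I_m-P$ est une \prn de rang $m-j$, et la comparaison de $\cF_k(E)=\cD_{m-k}(\I_m-P)$ avec $\gen{\sum_{\ell\leq k}r_\ell}$ doit \^{e}tre faite pour chaque $j$: les deux \ids valent $\gen{1}$ si $j\leq k$ et $0$ sinon. Ces corrections apport\'ees, la preuve est compl\`ete.
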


 Des r\'esultats utiles qui am\'eliorent l\'eg\`erement la proposition \ref{prop 
quot non iso} sont donn\'es dans  la proposition suivante et son corollaire (pour 
une \prco voir \cite{LQ}).
\begin{proposition} 
\label{prop epi rang constant}
Soit $\varphi : E\rightarrow F$ une \ali  {\em  surjective} entre \mptfs  de 
m\^{e}me \pol multiplicatif $\rR{E}=\rR{F}$, alors  $\varphi$ est un \isoz. Ceci 
s'applique en particulier s'ils ont m\^{e}me rang constant.
\end{proposition}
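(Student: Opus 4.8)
The plan is to reduce the statement to showing that $\Ker\,\varphi=0$. Since $F$ is projective and $\varphi$ is onto, $\varphi$ admits a section: the lifting property of $F$ applied to the surjection $\varphi$ and to $\Id_F$ yields $\sigma\colon F\to E$ with $\varphi\circ\sigma=\Id_F$. Then $\sigma\circ\varphi$ is idempotent, with image $\sigma(F)\cong F$ and kernel $\Ker\,\varphi$, so that $E=\Ker\,\varphi\oplus\sigma(F)$. In particular $\Ker\,\varphi$, being a direct summand of the finitely generated projective module $E$, is itself finitely generated projective, and $\varphi$ is an isomorphism exactly when $\Ker\,\varphi=0$. (Note that Proposition~\ref{prop quot non iso} is the special case $E=F$ with $\varphi$ an endomorphism, where the multiplicative polynomials coincide automatically; here it is the hypothesis $\rR{E}=\rR{F}$ that must do the corresponding work.)

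First I would exploit the multiplicativity of the multiplicative polynomial over direct sums, which is immediate from the formula $\rR{E}(X)=\det(\I_m+(X-1)P)$ of Theorem~\ref{propPTFDec}: realizing $E=\Ker\,\varphi\oplus\sigma(F)$ as the image of a block-diagonal idempotent, the determinant factors as the product over the two blocks. This gives $\rR{E}=\rR{\Ker\,\varphi}\cdot\rR{F}$, and since $\sigma(F)\cong F$ has the same multiplicative polynomial as $F$, the hypothesis $\rR{E}=\rR{F}$ produces the identity $\rR{\Ker\,\varphi}\cdot\rR{F}=\rR{F}$ in $\A[X]$.

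The crux, and the step I expect to be the main obstacle, is to deduce $\rR{\Ker\,\varphi}=1$ from this product relation, since the polynomials involved have idempotent (hence zero-divisor) coefficients, so ordinary degree and cancellation arguments do not apply directly. To handle it I would write $\rR{F}(X)=\sum_j f_j X^j$, where the $f_j$ form a fundamental system of orthogonal idempotents (Theorem~\ref{propPTFDec}), and $\rR{\Ker\,\varphi}(X)=\sum_i k_i X^i$. Multiplying the identity by $f_j$ and using $f_i f_j=0$ for $i\neq j$ and $f_j^2=f_j$ collapses $\rR{F}\cdot f_j$ to $f_j X^j$, so that $\rR{\Ker\,\varphi}(X)\,f_j X^j=f_j X^j$; matching coefficients forces $k_0 f_j=f_j$ and $k_i f_j=0$ for $i\ge 1$. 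Summing over $j$ and using $\sum_j f_j=1$ then gives $k_0=1$ and $k_i=0$ for $i\ge 1$, i.e. $\rR{\Ker\,\varphi}=1$. By Theorem~\ref{propPTFDec} the annihilator of $\Ker\,\varphi$ is $\gen{k_0}=\gen{1}$, whence $\Ker\,\varphi=0$ and $\varphi$ is an isomorphism. The final assertion then follows since a common constant rank $k$ means $\rR{E}=X^k=\rR{F}$, so the hypothesis holds.
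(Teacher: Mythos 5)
Your proof is correct, and there is in fact no internal proof to compare it against: the paper states this proposition without argument, deferring its constructive proof to \cite{LQ} (``pour une preuve constructive voir [LQ]''). Your route is the natural self-contained one, and every ingredient you use is available in the paper itself. The splitting $E=\Ker\,\varphi\oplus\sigma(F)$ via a section $\sigma$ (projectivity of $F$ applied to the surjection $\varphi$) is sound, as is the multiplicativity $\rR{E}=\rR{\Ker\,\varphi}\cdot\rR{\sigma(F)}$ obtained from a block-diagonal projection matrix, together with the invariance $\rR{\sigma(F)}=\rR{F}$. You correctly identify the cancellation $\rR{\Ker\,\varphi}\cdot\rR{F}=\rR{F}\Rightarrow\rR{\Ker\,\varphi}=1$ as the only delicate point, and your computation handles it properly: multiplying by $f_j$ collapses $\rR{F}$ to $f_jX^j$ by orthogonality, the coefficient identities $k_0f_j=f_j$ and $k_if_j=0$ ($i\geq 1$) follow, and summing over the sfio gives $\rR{\Ker\,\varphi}=1$; then Th\'eor\`eme \ref{propPTFDec} (the annihilator of a projective module is generated by the constant coefficient of its multiplicative polynomial) yields $\Ker\,\varphi=0$. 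Two remarks. First, an alternative cancellation is available: since $\rR{F}(1)=1$ its coefficients are comaximal, so by McCoy's theorem $\rR{F}$ is a regular element of $\A[X]$ and $(\rR{\Ker\,\varphi}-1)\,\rR{F}=0$ gives the conclusion at once; your sfio computation is preferable in this constructive setting precisely because it avoids invoking McCoy and uses only the orthogonality already recorded in Th\'eor\`eme \ref{propPTFDec}. Second, your parenthetical claim that Proposition \ref{prop quot non iso} is ``the special case $E=F$'' is slightly off --- that proposition concerns arbitrary finitely generated modules, not just projective ones, so neither statement contains the other --- but this side remark plays no role in your argument.
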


\begin{corollary} 
\label{corEpiRC}  Soit $F$ un \mptfz.
Si $F_1\oplus G_1 = F =F_2\oplus G_2$ avec $F_1\subset F_2$ on~a:

\sni\centerline{
 $\rR{F_1}=\rR{F_2}\;\Longleftrightarrow \;\rR{G_1}=\rR{G_2}\;\Longleftrightarrow 
\;F_1=F_2$} 
\end{corollary}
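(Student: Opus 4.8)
Le plan est de tout ramener \`a la proposition \ref{prop epi rang constant}. Je commencerais par noter que $F_1$, $F_2$, $G_1$ et $G_2$ sont des facteurs directs de $F$, donc des \mptfsz, et que le \pol multiplicatif est un invariant d'\isoz~; c'est la seule propri\'et\'e de ce \pol dont j'aurais besoin, l'id\'ee \'etant de fabriquer deux surjections convenables et d'appliquer deux fois la proposition \ref{prop epi rang constant}.

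Je poserais $p:F\rightarrow F_1$ la \prn sur $F_1$ \paralm \`a $G_1$ (donc $\Ker\,p=G_1$) et $q:F\rightarrow G_2$ la \prn sur $G_2$ \paralm \`a $F_2$ (donc $\Ker\,q=F_2$). L'inclusion $F_1\subset F_2$ donne $F_2=F_1\oplus(F_2\cap G_1)$, d'o\`u l'\'equivalence clef
\[
F_1=F_2\;\Longleftrightarrow\;F_2\cap G_1=0 .
\]
Je v\'erifierais ensuite que les restrictions $p|_{F_2}:F_2\rightarrow F_1$ et $q|_{G_1}:G_1\rightarrow G_2$ sont toutes deux \emph{surjectives} et ont le m\^eme noyau $F_2\cap G_1$. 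Pour $p|_{F_2}$, comme $p$ fixe $F_1\subset F_2$ son image est exactement $F_1$, et $\Ker(p|_{F_2})=F_2\cap\Ker\,p=F_2\cap G_1$. Pour $q|_{G_1}$, tout $y\in G_2$ s'\'ecrit $y=x_1+g_1$ avec $x_1\in F_1\subset F_2=\Ker\,q$ et $g_1\in G_1$, d'o\`u $y=q(y)=q(g_1)$~: la restriction est surjective, et $\Ker(q|_{G_1})=G_1\cap\Ker\,q=F_2\cap G_1$.

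Les \'equivalences en r\'esulteraient alors. Si $\rR{F_1}=\rR{F_2}$, la surjection $p|_{F_2}$ entre \mptfs de m\^eme \pol multiplicatif serait un \iso par la proposition \ref{prop epi rang constant}, donc injective, ce qui donnerait $F_2\cap G_1=0$ et $F_1=F_2$. De m\^eme $\rR{G_1}=\rR{G_2}$ rendrait $q|_{G_1}$ bijective et conduirait \`a $F_2\cap G_1=0$, donc \`a $F_1=F_2$. Les r\'eciproques sont imm\'ediates~: $F_1=F_2$ donne $\rR{F_1}=\rR{F_2}$, et alors $G_1$ et $G_2$, tous deux isomorphes \`a $F/F_1$, ont m\^eme \pol multiplicatif.

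Le point d\'elicat sera le choix des \emph{bonnes} projections et la v\'erification de la surjectivit\'e de $q|_{G_1}$, qui repose essentiellement sur l'inclusion $F_1\subset F_2=\Ker\,q$~; une fois acquis que $p|_{F_2}$ et $q|_{G_1}$ partagent le noyau $F_2\cap G_1$, dont l'annulation \'equivaut \`a $F_1=F_2$, la conclusion n'est plus qu'une double application de la proposition \ref{prop epi rang constant}. On pourrait aussi obtenir l'\'equivalence $\rR{F_1}=\rR{F_2}\Leftrightarrow\rR{G_1}=\rR{G_2}$ en simplifiant l'\'egalit\'e $\rR{F_1}\,\rR{G_1}=\rR{F_2}\,\rR{G_2}$, mais cette voie n\'ecessiterait la multiplicativit\'e et la simplifiabilit\'e des \pols multiplicatifs et se r\'ev\`ele moins directe.
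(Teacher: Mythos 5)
Votre preuve est correcte et suit exactement la voie que le papier sous-entend~: le corollaire y est pr\'esent\'e comme cons\'equence imm\'ediate de la proposition \ref{prop epi rang constant} (sans d\'etail suppl\'ementaire, les preuves constructives \'etant renvoy\'ees \`a \cite{LQ}), et votre argument --- appliquer cette proposition aux deux \prns restreintes $p|_{F_2}$ et $q|_{G_1}$, surjectives et de noyau commun $F_2\cap G_1$, lequel s'annule exactement quand $F_1=F_2$ --- est pr\'ecis\'ement la d\'erivation attendue.
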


\junk{
\subsubsection*{Autour du d\'eterminant} 

Il peut \^{e}tre utile d'\'elucider les rapports entre \deterz, 
\polcar $\rC \varphi (X)$ et le \pol $\rP \varphi (X)=\det(\Id_E+X\varphi)$ qui 
sont un peu plus d\'elicats lorsque le rang du module n'est pas constant. 

\begin{proposition} 
\label{propEndoInv} 
Soit $E$ un \mptf de \pol multiplicatif 
$\rR{E}(X)=r_0+r_1X+\cdots+r_nX^n=\det\,X\Id_E$ et
$\varphi$ un \endo de $E$. Posons $Q(X,Y)=\det(X\Id_E+Y\varphi)$.
On a donc
$\rP\varphi(X)=Q(1,X)=1+\rd 1(\varphi)\,X+\cdots+\rd n(\varphi)\,X^n$ 
et $\rC\varphi(X)=Q(X,-1)$. Posons $d_j=\rd{j}(\varphi)$.
Le \pol $r_h\,Q$ est homog\`ene de degr\'e $h$ et on a:
$$\begin{array}{rclcl} 
r_h\,d_k&=&0\quad \quad  \mathrm{si}\;\;0\leq h< k\leq n  \\[1mm] 
\rC\varphi(X)&=&r_0+\sum_{1\leq h\leq n} r_h\,X^h\,\rP\varphi(-1/X)   \\[1mm] 
\rP\varphi(-X)&=&r_0+\sum_{1\leq h\leq n} 
r_h\,X^h\,\rC\varphi(1/X)    \\[1.5mm]
\det(\varphi -X\Id_M)&=& \rR{E}(-1)\,\rC\varphi(X)  \\[0.5mm]
\det\,\varphi&=& r_0+r_1d_1+\cdots+r_nd_n\;= 
\;\rR{E}(-1)\,\rC\varphi(0)\\[1mm] 
\rC\varphi(\varphi )&=& 0\quad \qquad \quad \mathrm{(Cayley-Hamilton)} 
\end{array}$$
\end{proposition}

}
\junk{
\subsubsection*{Somme directe, produit tensoriel, puissance ext\'erieure} 

\begin{proposition} 
\label{prop Somme directe} Soient $E\oplus F$ de deux \mptfsz, $\varphi$ et $\psi$ 
des \endos de $E$ et $F$, $\rR{E}(X)=\sum_kr_kX^k.$
\begin{enumerate}
\item La somme directe $E\oplus F$  est un \mptfz.
\begin{itemize}
\item  $\det\,{\varphi \oplus \psi}=\det\,{\varphi}\,\det\,{\psi}\,.$
\item  $\rR{E\oplus F}(X)=\rR{E}(X)\,\rR{F}(X)\,.$
\item  Si $\theta:E\rightarrow F$ est une \ali et si on consid\`ere 
l'\ali \gui{triangulaire} $\lambda:E\oplus F\rightarrow E\oplus F~: ~ (x,y)\mapsto 
(\varphi(x),\psi(y)+\theta(x))$, alors on~a: 

\sni\centerline{$\det\,{\lambda}=\det\,{\varphi}\,\det\,{\psi}\quad $ et $\quad 
\rP\lambda(X) =
\rP\varphi(X)\,\rP\psi(X)\,.\quad \quad 
$}
\end{itemize}
\item Le produit tensoriel $E\otimes F$  est un \mptfz.
\begin{itemize}
\item $\det\,{\varphi \otimes 
\psi}=\rR{F}(\det\,{\varphi})\,\rR{E}(\det\,{\psi})\,.$
\item  $\rR{E \otimes F}(X)=\rR{E}(\rR{F}(X))=\rR{F}(\rR{E}(X))\,.$
\end{itemize}
\item La puissance ext\'erieure $k$--\`eme $\land^kE$ est un \mptfz.
\begin{itemize}
\item $\det({\land^k\varphi})=
\sum_{h\geq k}r_h(\det\,\varphi)^{h-1\choose k-1} \,.$ 
\item $\Tr({\land^k\varphi})=
\rd k(\varphi)  \,.$ 
\item $\rR{\land^kE}(X)=\sum_{h< k}r_h+\sum_{h\geq k}r_hX^{h\choose k}\,.$
\end{itemize}
\end{enumerate}
\end{proposition}

Notez qu'un \pol multiplicatif $R$ est non diviseur de z\'ero dans
$\A[X]$ (ses \coes sont comaximaux puisque $R(1)=1$), donc la connaissance  
$\rR{E\oplus F}(X)$ et de $\rR{E}(X)$  implique celle de $\rR{F}(X)$. 
}
\junk{
\subsubsection*{Calculs sur les rangs \gnesz} 

En passant du rang au \pol multiplicatif, on passe de la notation additive \`a la 
notation multiplicative. On pourrait consid\'erer que le \gui{rang \gnez} d'un 
\mptf $E$ est le logarithme en base $X$ de son \pol multiplicatif.
Soit $R(X)=r_0+r_1X+\cdots +r_nX^n$ un \pol multiplicatif. Posons $s_k=1-r_k$. 
Puisque $r_0+r_1X+\cdots +r_nX^n=\prod_{k=1}^n(s_k+r_kX^k)$ et
$s_k+r_kX^k=(s_k+r_kX)^k$ on pourrait facilement repasser en notation
additive si on d\'ecidait d'un nom pour le logarithme en base $X$ de
$(1-r)+rX$ lorsque $r$ est un \idmz. Par exemple, si on notait ce logarithme par 
$[r]$ le rang \gne d'un module $E$ tel que $\rR{E}=R$ serait
$\sum_{k=1}^{n}k\,[r_k]$.

La r\`egle de calcul de base portant sur le mono\"{\i}de commutatif r\'egulier des 
rangs \gnes est alors la suivante: $[r]+[r']=[r\oplus r']+2\, [r\land r']$ o\`u 
les lois
$\land$ et $\oplus$ sont celles de l'alg\`ebre de Boole des \idms de l'anneau, c.-
\`a-d. $[r\oplus r']=[r+r'-2rr']$ et $[r\land r']=[rr']$.  
}


\subsection{Quand les \mrcs sont libres}\label{subsecMrcLibre} 
Pour un anneau $\A$ il revient au m\^{e}me de dire que toutes
les matrices \lnls de rang constant sont \nlsz, ou que tous les
modules \pros de rang constant sont libres.

Signalons quelques cas importants o\`u ceci se produit.
\begin{itemize}
\item $\A$ est un anneau local. 
\item $\A$ est z\'ero-dimensionnel 
(i.e., $\forall x\in\A,\,\exists y\in\A,\,\exists n\in\NN\;\;x^{n+1}=yx^n$).   
\item Le quotient de $\A$ par son radical de Jacobson $\Rad(\A)$ est 
z\'ero-dimensionnel (rappelons que si $\cU_\A$ d\'esigne le groupe des unit\'es de 
$\A$, $\Rad(\A)=\{\,x\in\A\,|\,1+x\A\subset\cU_\A\,\}$).
\item $\A$ est \gui{fortement U-irr\'eductible}, i.e., 
pour tout \pol  $P=\sum_{i=0}^na_ix^i\in\A[X]$ primitif (i.e., tel que 
$\gen{a_0,\ldots ,a_n}=\gen{1}$), il existe $x\in\A$ tel que $\gen{P(x)}=\gen{1}$.
\item $\A=\gB[X_1,\ldots ,X_n]$ o\`u $\gB$ est un anneau de Bezout, i.e.,
tout \itf de $\gB$ est principal.
\end{itemize}

Les trois premiers cas sont trait\'es \cot dans \cite{LQ}.

Il semble qu'on ne connaisse pas pour le moment de preuve constructive pour le 
dernier cas, qui est une extension remarquable du \tho de Quillen-Suslin, due \`a 
Lequain et Simis \cite{LS}. 
Une telle preuve fournirait un \algo pour transformer une matrice $A$ \lnl de rang 
constant $r$ en une matrice  
$\I_{r,m,n}=Q_1AQ_2$ avec  $Q_1$ et $Q_2$  inversibles.

Le quatri\`eme cas est assez facile. 
Rappelons comment cela fonctionne.
On part d'une matrice \lnl $A=(a_{ij})\in\A^{m{\times}n}$ de rang 
$r\geq 1$. On va la diagonaliser par des changements de base de la source et du 
but. 
Imaginons que nous multiplions chaque ligne \num$k$ par $X^{k-1}$ puis chaque 
colonne \num$\ell$ par $X^{(\ell-1) m}$, et consid\'erons le \pol 
$P(X)=\sum_{k\ell}a_{k\ell}X^{(\ell-1) m+k-1}$. Par hypoth\`ese ce \pol est 
primitif. Soit  $x\in\A$ tel que $P(x)$ est inversible.
Dans la matrice $A$ on ajoute \`a la premi\`ere ligne $L_1$ les lignes $x^{k-
1}L_k$ ($k>1$). Puis dans la matrice obtenue, 
on ajoute \`a la premi\`ere colonne $C_1$ les colonnes  
$x^{(\ell-1)m}\,C_\ell$ ($\ell>1$). Alors en position $(1,1)$ on trouve 
$P(x)$ qui peut servir de pivot de Gauss. On termine par induction.

Notez que l'\algo esquiss\'e ci-dessus utilise un nombre raisonnable
d'\ops \elrs si le caract\`ere fortement U-irr\'eductible de l'anneau
est rendu explicite au moyen d'un nombre raisonnable d'\ops
\elrs (nous ne cherchons pas \`a formaliser la chose).

Pour tout anneau $\A$ il y a une extension fid\`element plate et 
fortement U-irr\'eductible de
$\A$ qui est le localis\'e \gui{de Nagata} $\A(X)=S^{-1}\A[X]$ o\`u 
$S\subset\A[X]$ est le mono\"{\i}de form\'e par les \pols primitifs.
En effet, si $P(T)=\sum_iQ_i(X)T^i$ est tel que $\sum_iB_i(X)Q_i(X)$ 
soit un \pol primitif alors pour $k>\sup_i(\deg_X(Q_i))$, $P(X^k)$ 
est lui-m\^{e}me un \pol primitif.

Il n'est pas \'etonnant que cet anneau  $\A(X)$ joue un role crucial dans la suite 
pour nos calculs uniformes (en temps raisonnable).

\junk{
\subsection{Calculs avec un petit nombre d'\ops \elrs} 

Pour appliquer le \tho \ref{propPTFDec}, il faut conna\^{\i}tre
la matrice de projection $P$.
En pratique si on est parti d'une matrice $A$ pour l'\ali $\varphi$ du \tho 
\ref{theoremIFD} et si on est capable de calculer une matrice $B$ pour un $\psi$ 
v\'erifiant le point \ref{e} de ce \thoz, on aura $P=AB$.

Le \pb de calculer $\psi$ \`a partir de $\varphi$ est de toute mani\`ere important 
\`a r\'esoudre en pratique, car il permet de donner une solution uniforme
au \sli $\varphi(x)=b$.
Trouver $\psi$ v\'erifiant le point \ref{e} revient de nouveau \`a r\'esoudre un 
\sli (dont les inconnues sont les entr\'ees de la matrice $B$ de $\psi$).
La m\'ethode propos\'ee implicitement ci-dessus 
(dans le commentaire qui suit le \tho \ref{theoremIFD}) pour faire ce travail,
en utilisant les \idts de Cramer pour d\'eduire \ref{e} de \ref{h} est 
constructive mais pas praticable. 
On propose en effet dans un premier temps de calculer les \idms $e_k$ tels que 
$\cD_k(A)=\gen{e_k}$, d'en d\'eduire les $r_k$ puis de faire une \coli d'\idts de 
Cramer.
Ce calcul n'est gu\`ere praticable \`a cause du trop grand nombre de \gtrs des 
\ids  $\cD_k(A)$.

}
\subsection{Applications lin\'eaires \lnls entre \mptfsz}

\begin{definition} 
\label{defIng2} 
Soient $E$ et $F$ deux \Amos \ptfsz,  et une \ali $\varphi:E\rightarrow F$. 
\begin{enumerate}
\item Les {\em  \idds de l'\ali $\varphi$}  sont les id\'eaux 

\ss\centerline{$\cD_k(\varphi ) \; 
:=  \;$  l'id\'eal   engendr\'e  par  les   $\det(\lambda \circ\varphi 
\circ\theta)$  
}

\sni o\`u $k$ est un entier arbitraire, $\lambda :F\rightarrow \A^k$ et
$\theta:\A^k\rightarrow E$ sont arbitraires. 
\item L'\ali $\varphi$ est dite de rang $\leq k$ si $\cD_{k+1}(\varphi)=0$.
\item L'\ali $\varphi$ est dite \lnl si $\Im\,\varphi$ est facteur direct dans 
$F$. Elle est dite  \lnl de rang $k$ si en outre le module \pro $\Im\,\varphi$ est  
de rang $k$. 
\end{enumerate}
\end{definition}

Le calcul de $\cD_k(\varphi )$ se fait comme suit. Supposons que $E\oplus E_1 
\simeq \A^n$,  $F\oplus F_1 \simeq \A^m$, que
$\Pi_E: \A^n\rightarrow E$ est la projection sur $E$ \paralm \`a $E_1$, $\iota_F: 
F\rightarrow \A^m$ est l'injection naturelle et que $\varphi'=\iota_F\circ\varphi 
\circ\Pi_E$. Alors
$\cD_k(\varphi )=\cD_k(\varphi')$.
Si $P\in\A^{n{\times}n}$ est la matrice de  la projection sur $E$ \paralm \`a 
$E_1$  (\cad de $\pi_E=\iota_E\circ\Pi_E$) et $Q\in\A^{m{\times}m}$ est la matrice 
de  la projection sur $F$ \paralm \`a $F_1$ alors, la matrice $A$ de $\varphi'$ 
v\'erifie $QAP=A$ (et cette
\egt caract\'erise les matrices du type $\varphi'$).
On dira que la matrice $A$ \emph{repr\'esente} $\varphi$ (via les \isos
$E\oplus E_1 \simeq \A^n$ et  $F\oplus F_1 \simeq \A^m$).

De mani\`ere \gnle tout calcul sur les \mptfs se ram\`ene \`a un calcul sur des 
matrices.

\smallskip Une \ali $\varphi$ entre \mptfs est \lnl de rang $k$ \ssi  
$\cD_k(\varphi)=\gen{1}$ et  $\cD_{k+1}(\varphi)=0$. Elle est \lnl \ssi tous ses 
\idds sont engendr\'es par des \idmsz. Plus \gnlt on peut recopier le \tho 
\ref{theoremIFD}. 

\begin{theorem} 
\label{theoremIFD2}
Les propri\'et\'es suivantes pour une \ali $\varphi:E\rightarrow F$   entre \mptfs 
sont \'equivalentes.
\begin{enumerate}
\item \label{a'} $\Im\,\varphi$  est   facteur direct dans  $F$.
\item \label{b'}  $\Coker\,\varphi$  est   un \mptfz.
\item \label{c'}  $\Im\,\varphi $  est   facteur direct dans  $F$,  $\Ker\,\varphi 
$  est   facteur direct dans  $E$ et si $H$ est un suppl\'ementaire de 
$\Ker\,\varphi $, $\varphi$ r\'ealise un \iso de $H$ sur $\Im\,\varphi $.
\item \label{d'} Il existe  $\varphi\bul :F\rightarrow E$   telle que 
$E=\Ker\,\varphi\oplus\Im\,\varphi\bul $ et  $F=\Ker\,\varphi\bul 
\oplus\Im\,\varphi$.
\item \label{e'} Il existe  $\psi:F\rightarrow E$   v\'erifiant
$\varphi \circ \psi \circ\varphi =\varphi$.
\item \label{f'} Il existe  $\psi:F\rightarrow E$   v\'erifiant
$\varphi \circ \psi\circ\varphi =\varphi$ et $\psi \circ \varphi \circ \psi =\psi 
$.
\item \label{g'} Chaque \idd $\cD_k(\varphi)$ est \idmz.
\item \label{h'} Chaque \idd $\cD_k(\varphi)$ est engendr\'e par un \idm $e_k$. 
Soit alors $r_k=e_k-e_{k+1}$. Les $r_k$ forment un syst\`eme fondamental d'\idms 
\ortsz. 
Pour tout mineur $\mu$ d'ordre $k$ d'une matrice $A$ qui repr\'esnte $\varphi$, 
sur le localis\'e $\A[1/(r_k\,\mu)]$  l'\ali $\varphi$ devient \nl de rang $k$.
\item \label{i'} L'\ali $\varphi$ devient \nl apr\`es localisation en des \elts 
$x_i$ comaximaux.
\item \label{j'} L'\ali $\varphi$ devient \nl apr\`es localisation en
n'importe quel \id maximal.
\end{enumerate}
\end{theorem}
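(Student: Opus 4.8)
The plan is to \gui{recopy} Theorem \ref{theoremIFD} by transporting each property along a matrix representation of $\varphi$. Concretely, I would fix isomorphisms $E\oplus E_1\simeq\A^n$ and $F\oplus F_1\simeq\A^m$, write $\Pi_E:\A^n\to E$, $\Pi_F:\A^m\to F$ for the projections and $\iota_E:E\to\A^n$, $\iota_F:F\to\A^m$ for the injections, and set $\varphi'=\iota_F\circ\varphi\circ\Pi_E:\A^n\to\A^m$, the free \gui{substitute} for $\varphi$ already used above. The starting facts are $\cD_k(\varphi)=\cD_k(\varphi')$ for every $k$ (noted above), together with $\Pi_E\circ\iota_E=\Id_E$ and $\Pi_F\circ\iota_F=\Id_F$, from which $\Im\,\varphi'=\iota_F(\Im\,\varphi)$, $\Ker\,\varphi'=\Ker\,\varphi\oplus E_1$ and $\Coker\,\varphi'\simeq\Coker\,\varphi\oplus F_1$. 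I would then show that each condition \ref{a'}--\ref{j'} for $\varphi$ is equivalent to the like-numbered condition \ref{a}--\ref{j} for $\varphi'$; applying Theorem \ref{theoremIFD} to $\varphi'$ then closes all the equivalences at once.

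For the \gui{geometric} conditions \ref{a'}, \ref{b'}, \ref{c'} the tool is transitivity of the direct-summand relation: for $M\subseteq N\subseteq L$ with $N$ a direct factor of $L$, the module $M$ is a direct factor of $N$ iff it is a direct factor of $L$ (restrict a decomposition one way; intersect $L=M\oplus M'$ with $N$ the other way). Applied to $\Im\,\varphi\subseteq F\subseteq\A^m$ this gives \ref{a'}$\Leftrightarrow$\ref{a}; applied to $\Ker\,\varphi\subseteq E\subseteq\A^n$ together with $\Ker\,\varphi'=\Ker\,\varphi\oplus E_1$ it handles the kernel part of \ref{c'}; and since a direct factor of a projective module is projective, $\Coker\,\varphi'\simeq\Coker\,\varphi\oplus F_1$ yields \ref{b'}$\Leftrightarrow$\ref{b}.

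For the \gui{equational} conditions \ref{d'}, \ref{e'}, \ref{f'} I would use the correspondence $\psi\mapsto\psi'=\iota_E\circ\psi\circ\Pi_F$ and the identity
\[ \varphi'\circ\psi'\circ\varphi'=(\varphi\circ\psi\circ\varphi)', \]
which holds for \emph{any} $\psi':\A^m\to\A^n$ once one sets $\psi=\Pi_E\circ\psi'\circ\iota_F$, thanks to $\Pi_E\circ\iota_E=\Id_E$ and $\Pi_F\circ\iota_F=\Id_F$. As $X\mapsto\iota_F\circ X\circ\Pi_E$ is injective ($\Pi_E$ surjective, $\iota_F$ injective), the equation $\varphi\circ\psi\circ\varphi=\varphi$ is equivalent to $\varphi'\circ\psi'\circ\varphi'=\varphi'$, and likewise for $\psi\circ\varphi\circ\psi=\psi$; this settles \ref{e'}$\Leftrightarrow$\ref{e} and \ref{f'}$\Leftrightarrow$\ref{f}. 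The case \ref{d'}$\Leftrightarrow$\ref{d} is similar, transporting $\varphi\bul$ to $\iota_E\circ\varphi\bul\circ\Pi_F$ and re-using the direct-sum bookkeeping above.

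The \gui{determinantal} and localization conditions are almost automatic: \ref{g'} and \ref{h'} hold because $\cD_k(\varphi)=\cD_k(\varphi')$, so the idempotents $e_k$ and the sfio $(r_k)$ coincide for $\varphi$ and $\varphi'$, and the matrix \gui{representing $\varphi$} is exactly that of $\varphi'$; conditions \ref{i'} and \ref{j'} descend from \ref{i} and \ref{j}, with \ref{j'} implying the others only through the axiom of choice, exactly as in Theorem \ref{theoremIFD}. The one place I expect to need real care is the reading of \gui{$\varphi$ becomes \emph{simple}} for a map between projective modules: Definition \ref{defAliNl} requires free modules, so I must check that at the relevant localizations (at a maximal ideal for \ref{j'}, at suitable comaximal elements for \ref{i'} and \ref{h'}) the modules $E$ and $F$ do become free --- which is precisely the local structure of finitely generated projective modules (Theorem \ref{propPTFDec}) --- before one may read off \gui{simple} from the matrix of $\varphi'$. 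This interpretive point, rather than the formal transport, is where the (minor) difficulty lies.
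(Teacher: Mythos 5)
Your proposal is correct and is essentially the paper's own argument: the paper gives no separate proof of Theorem \ref{theoremIFD2}, saying only (just before the statement) that \gui{tout calcul sur les \mptfs se ram\`ene \`a un calcul sur des matrices} and that one can \gui{recopier} Theorem \ref{theoremIFD} via the representation $\varphi'=\iota_F\circ\varphi\circ\Pi_E$. Your transport of each condition to the like-numbered one for $\varphi'$ --- including the observation that for \ref{i'} and \ref{j'} one must first know the localized modules are free (Theorem \ref{propPTFDec}) before Definition \ref{defAliNl} applies --- is exactly the elaboration the paper leaves to the reader.
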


\section{Applications \lins \cros et \ings pour les \mptfs} 
\label{subsecIVGN}

Dans toute la section \ref{subsecIVGN}, $E$ et $F$ sont des \mptfsz.
\subsection{Un crit\`ere pour les \alis crois\'ees 
avec elles-m\^{e}mes}

Voici une \gnn d'un r\'esultat usuel pour les \evcs de dimension finie. Il s'agit 
ici d'une cons\'equence importante du \tho de Cayley-Hamilton.

\begin{theorem} 
\label{thCHam} 
Soit $\varphi :E\rightarrow E$ un \endoz. 
Notons $d_j=\rd j(\varphi)$.
\Propeq
\begin{enumerate}
\item $\varphi$ est de rang $\leq k$  et $d_k$ est inversible.
\item $\varphi$ est crois\'ee avec elle-m\^{e}me et de rang $k.$
\end{enumerate}
Lorsque ces conditions sont v\'erifi\'ees,    la projection
$\pi:E\rightarrow E$ sur $\Im\,\varphi$ \paralm \`a $\Ker\,\varphi$ v\'erifie:
\begin{equation} \label{eqthCHam}
 d_k\,\pi = d_{k-1}\,\varphi -d_{k-2}\,\varphi^2+
\cdots +(-1)^{k-1}\varphi^{k}\,.
\end{equation}
En outre l'\ing $\psi=\Ig(\varphi,\varphi)$ v\'erifie
\begin{equation} \label{eqthCHam2}
 d_k\,\psi  = d_{k-1}\,\pi -d_{k-2}\,\varphi+d_{k-3}\,\varphi^2+
\cdots +(-1)^{k-1}\varphi^{k-1}\,.
\end{equation}
\end{theorem}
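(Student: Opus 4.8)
The plan is to reduce the whole statement to a single Cramer/Cayley--Hamilton identity and then run a purely formal idempotent calculation. Introduce the endomorphism
$$
q \;:=\; d_{k-1}\Id_E - d_{k-2}\varphi + d_{k-3}\varphi^2 - \cdots + (-1)^{k-1}\varphi^{k-1} \;=\; \Adj^{(k)}_{\Id_E}(\varphi),
$$
which is $\nabla(\rd k)(\varphi)$ by Lemma \ref{lemAdjGrad2} (take $\varphi\bul=\Id_E$ in Notation \ref{notaGenAdj}). The engine of the proof is the identity
$$
\varphi\, q\,\varphi \;=\; d_k\,\varphi, \qquad (\star)
$$
namely Theorem \ref{thGENCRAM1} specialized to $\varphi\bul=\Id_E$, for which $a_k=\rd k(\Id_E\varphi)=d_k$. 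Since Section \ref{subsecInter} is written for free modules, I would first carry $(\star)$ over to the projective case by the standard device: pick $N$ with $E\oplus N\simeq\A^n$ and replace $\varphi$ by $\tilde\varphi=\varphi\oplus 0$. This changes neither the coefficients $\rd j$ (because $\det(\Id+X\tilde\varphi)=\rP\varphi(X)$), nor the determinantal ideals (because $\tilde\varphi$ is exactly a representative $\varphi'$ in the sense of Definition \ref{defIng2}, so $\cD_j(\tilde\varphi)=\cD_j(\varphi)$), nor the restriction of $q$ to $E$ (the powers of $\tilde\varphi$ vanish on $N$). Applying the free identity to $\tilde\varphi$ and restricting to $E$ yields $(\star)$.

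Granting $(\star)$, set $\pi:=d_k^{-1}\varphi q$ (legitimate since $d_k$ is invertible). As $q$ is a polynomial in $\varphi$ it commutes with $\varphi$, and regrouping the three factors of $(\star)$ gives immediately $\pi\varphi=\varphi$ and $\pi^2=\pi$, while the commuted form $\varphi^2q=d_k\varphi$ gives $\varphi\pi=\varphi$. Hence $\pi$ is idempotent with $\Im\pi=\Im\varphi$ (from $\pi\varphi=\varphi$ and $\pi=d_k^{-1}\varphi q$) and $\Ker\pi=\Ker\varphi$ (from $\varphi\pi=\varphi$ together with commutation), so that
$$
E=\Im\pi\oplus\Ker\pi=\Im\varphi\oplus\Ker\varphi .
$$
Thus $\varphi$ is crossed with itself and $\pi$ is precisely the projection onto $\Im\varphi$ parallel to $\Ker\varphi$; reading off $d_k\pi=\varphi q$ is exactly formula (\ref{eqthCHam}). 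For the rank in direction $1\Rightarrow 2$, I note that $d_k\in\cD_k(\varphi)$ — it is the sum of the principal $k\times k$ minors of a representative, via the same $\tilde\varphi$ reduction — so $d_k$ invertible forces $\cD_k(\varphi)=\gen 1$; with the hypothesis $\cD_{k+1}(\varphi)=0$ (rank $\leq k$) this says $\varphi$ is \lnl de rang $k$, i.e. $\Im\varphi$ is projective of rank $k$.

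For $2\Rightarrow 1$: rank $k$ gives $\cD_{k+1}(\varphi)=0$, which is rank $\leq k$, so only the invertibility of $d_k$ remains. Since $E=\Im\varphi\oplus\Ker\varphi$, Proposition \ref{propAutocro} gives $\Im\varphi=\Im\varphi^2$, so $\varphi$ restricts to a surjective, hence (Proposition \ref{prop quot non iso}) bijective, endomorphism $\varphi_0$ of the rank-$k$ module $\Im\varphi$; then $\det(\Id_E+X\varphi)=\det(\Id_{\Im\varphi}+X\varphi_0)$ has top coefficient $d_k=\det\varphi_0$, invertible because $\varphi_0$ is. Finally, for (\ref{eqthCHam2}) I would use that $\psi=\Ig(\varphi,\varphi)$ satisfies $\varphi\psi=\psi\varphi=\pi$ and $\psi\varphi\psi=\psi$ (Proposition \ref{thCrIvg2bis}), whence $\psi=\psi\pi=d_k^{-1}(\psi\varphi)q=d_k^{-1}\pi q$; expanding $\pi q$ and using $\pi\varphi^{\,j}=\varphi^{\,j}$ for $j\geq 1$ collapses every term past the first and delivers $d_k\psi=d_{k-1}\pi-d_{k-2}\varphi+d_{k-3}\varphi^2-\cdots+(-1)^{k-1}\varphi^{k-1}$.

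The only genuine obstacle is bookkeeping rather than any deep argument: everything downstream of $(\star)$ is formal manipulation of commuting idempotents. The points demanding care are the transfer of $(\star)$ from free to projective modules and the two rank facts, $d_k\in\cD_k(\varphi)$ and $d_k=\det\varphi_0$ on the rank-$k$ image — all of which follow from the $\tilde\varphi=\varphi\oplus 0$ reduction and the determinant theory for projective modules recalled after Definition \ref{notaDk}.
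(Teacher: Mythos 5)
Your proposal is correct, and it takes a genuinely different route from the paper's. The paper proves the delicate direction $1\Rightarrow 2$ structurally: from $d_k\in\cD_k(\varphi)$ and $\cD_{k+1}(\varphi)=0$ it concludes that $\varphi$ is localement simple de rang $k$, hence $\Im\,\varphi$ admits a complement $K$; on $E=\Im\,\varphi\oplus K$ the endomorphism is triangular, so $\rP{\varphi}(X)=\rP{\varphi_0}(X)$ for the restriction $\varphi_0$ of $\varphi$ to $\Im\,\varphi$, whence $\det\varphi_0=d_k$ is invertible; this yields $E=\Im\,\varphi\oplus\Ker\,\varphi$ (via $x_1=\varphi_0^{-1}(\varphi(x))$), and both formulas (\ref{eqthCHam}) and (\ref{eqthCHam2}) then drop out of Cayley--Hamilton applied to the invertible $\varphi_0$. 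You instead make the generalized Cramer identity $\varphi\, q\,\varphi=d_k\,\varphi$ (Theorem \ref{thGENCRAM1} with $\varphi\bul=\Id_E$, carried from free to projective modules by the $\varphi\oplus 0$ device) the engine, and get the decomposition, the crossing and both formulas by formal calculus with the idempotent $\pi=d_k^{-1}\varphi\, q$; there is no circularity, since Theorem \ref{thGENCRAM1} rests only on the Cramer congruence (\ref{eqCGCram}) and the gradient identity of Theorem \ref{thGENCRAM3}, proved independently of the present statement. What your route buys: the direct sum $E=\Im\,\varphi\oplus\Ker\,\varphi$ falls out of an explicit idempotent, with no appeal to Theorem \ref{theoremIFD2} for that step, and you spell out both the converse $2\Rightarrow 1$ and the free-to-projective transfer, which the paper leaves implicit. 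What the paper's route buys: a shorter argument given its toolbox, and it isolates the conceptual point (the restriction $\varphi_0$ is invertible because $\det\varphi_0=d_k$), a fact your proof needs anyway for $2\Rightarrow 1$. One citation nit: the identities $\varphi\psi=\psi\varphi=\pi$ for $\psi=\Ig(\varphi,\varphi)$ come from conditions 1 and 2 of Theorem \ref{thCrIvg} (with $\varphi\bul=\varphi$), not from Proposition \ref{thCrIvg2bis} alone; this is cosmetic and does not affect the argument.
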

\begin{proof}{Preuve}
Le point d\'elicat est: 1 implique 2.\\
On a $d_ k\in\cD_k(\varphi)$ donc $\cD_k(\varphi)=\gen{1}$, et  
$\cD_{k+1}(\varphi)=0$ par hypoth\`ese. Donc  $\varphi$ est \lnl de rang $k$.
Soit $K$ un suppl\'ementaire de $\Im\,\varphi$ dans $E$. Sur cette somme
directe $\varphi$ est \gui{triangulaire} avec une \gui{matrice} du type:
$$ \cmatrix{\varphi_0&\varphi'\cr
0_{\Im\,\varphi,K}&0_{K,K}}
$$
o\`u $\varphi_0:\Im\,\varphi\rightarrow\Im\,\varphi$  est la restriction de 
$\varphi$. Donc $\det(\Id_E+X\varphi)=\det(\Id_{\Im\,\varphi}+X\varphi_0)$.
On obtient $\det\,\varphi_0=d_ k$ et donc $\varphi_0$ est inversible. Ceci 
implique tout d'abord $\Im\,\varphi \cap\Ker\,\varphi =0$. Ensuite
tout $x\in E$ s'\'ecrit $x_1+x_2$ o\`u $x_1=\varphi_0^{-
1}(\varphi(x))\in\Im\,\varphi$ et $x_2=x-x_1\in\Ker\,\varphi$.
Donc  $E=\Im\,\varphi \oplus \Ker\,\varphi$. On peut donc remplacer $K$ par 
$\Ker\,\varphi$ et la \gui{matrice} ci-dessus devient \gui{diagonale} 
($\varphi'=0_{\Ker\,\varphi ,\Im\,\varphi}$). 
Le \tho de Cayley-Hamilton appliqu\'e \`a $\varphi_0$ donne 
$$ d_ k\,\Id_{\Im\,\varphi } = \varphi_0\,\left(d_{k-1} 
-d_{k-2}\,\varphi_0+ d_{k-3}\,\varphi_0^2+
\cdots +(-1)^{k-1}\varphi_0^{k-1}\,\right)$$
ce qui implique facilement les \egts voulues.
\end{proof}

\subsection{Applications \lins \cros entre \mptfsz}   
\label{LCPTF}

Dans toute la suite de la section \ref{subsecIVGN} on consid\`ere deux \alis 
$\varphi:E\rightarrow F$ et $\varphi\bul:F\rightarrow E$.

Supposons tout d'abord $\varphi$ et $\varphi\bul$ \crosz. Nous reprenons les 
notations de la section \ref{subsecIGALC}.

Soit $\varphi_1:\Im\,\varphi\rightarrow \Im\,\varphi$ 
l'\auto \lin d\'efini par
$\varphi_1=\varphi_0\,\varphi_0\bul$. C'est la restriction 
de $\varphi\,\varphi\bul$ \`a $\Im\,\varphi$. 
D\'efinissons de m\^{e}me $\varphi_1\bul:\Im\,\varphi\bul\rightarrow 
\Im\,\varphi\bul$ par $\varphi_1\bul=\varphi_0\bul\,\varphi_0$.
Si le rang de $\varphi$ est $\leq k$ on obtient alors:
$$
\rP{\varphi_1\bul}(Z)
\;=\;\rP{\varphi\bul\varphi}(Z)
\;=\;\rP{\varphi\varphi\bul}(Z)
\;=\;\rP{\varphi_1}(Z)\;=\;1+ a_1 Z+ \cdots+ a_kZ^k
$$
o\`u $a_j=\rd j(\varphi\bul \varphi)$. Si  $\varphi$  est de rang constant $k$ 
alors il en va de m\^{e}me pour
$\varphi_0$, $\varphi_1$, $\varphi \bul$, $\varphi \,\varphi \bul$ etc\ldots.
  De sorte que  $a_k$ est un \elt inversible de $\A$.

On a la r\'eciproque suivante importante, qui est un analogue du \tho
\ref{thCHam}.

\begin{theorem} 
\label{thCroiRgCst} 
Notons $a_j=\rd j(\varphi\varphi\bul)$. \Propeq
\begin{enumerate}
\item  $\varphi$ et $\varphi\bul$ sont \cros de rang $k$.
\item  $\varphi$ et $\varphi\bul$ sont de rang $\leq k$  et $a_k$ est inversible.
\end{enumerate}
\end{theorem}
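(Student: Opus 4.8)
Le plan est de tout ramener au \tho \ref{thCHam}, appliqu\'e aux deux compos\'ees $\varphi\varphi\bul:F\to F$ et $\varphi\bul\varphi:E\to E$, puis de recoller les morceaux gr\^{a}ce au comptage des rangs fourni par le corollaire~\ref{corEpiRC}.

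Commen\c{c}ons par l'implication facile $1\Rightarrow 2$. Si $\varphi$ et $\varphi\bul$ sont \cros de rang $k$, alors par d\'efinition $\Im\,\varphi$ est \pro de rang $k$ et facteur direct dans $F$, donc $\varphi$ est \lnl de rang $k$ et en particulier $\cD_{k+1}(\varphi)=0$, \cad que $\varphi$ est de rang $\leq k$; par sym\'etrie de la relation de croisement et gr\^{a}ce aux \isos $\varphi_0,\varphi_0\bul$ rappel\'es apr\`es~(\ref{SDO2}), $\Im\,\varphi\bul\simeq\Im\,\varphi$ est aussi facteur direct de rang $k$ et $\varphi\bul$ est de rang $\leq k$. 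Enfin $a_k$ est inversible: comme not\'e juste avant l'\'enonc\'e (section~\ref{LCPTF}), la restriction $\varphi_1=\varphi_0\,\varphi_0\bul$ est un \auto du module $\Im\,\varphi$ de rang $k$, et $a_k=\rd k(\varphi\varphi\bul)=\rd k(\varphi_1)=\det\,\varphi_1$ est donc inversible.

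L'essentiel est dans $2\Rightarrow 1$. Supposons $\varphi$ et $\varphi\bul$ de rang $\leq k$ et $a_k$ inversible. Je remarque d'abord que $\varphi\varphi\bul$ est lui-m\^{e}me de rang $\leq k$, puisque $\cD_{k+1}(\varphi\varphi\bul)\subseteq\cD_{k+1}(\varphi)=0$ par l'inclusion de Cauchy--Binet sur les \idds d'un produit, et que $\rd k(\varphi\varphi\bul)=a_k$ est inversible. Le \tho \ref{thCHam} s'applique donc \`a l'\endo $\varphi\varphi\bul$ et donne que $\varphi\varphi\bul$ est \cro avec elle-m\^{e}me de rang $k$, soit $F=\Im\,\varphi\varphi\bul\oplus\Ker\,\varphi\varphi\bul$ avec $\Im\,\varphi\varphi\bul$ \pro de rang $k$; sym\'etriquement $E=\Im\,\varphi\bul\varphi\oplus\Ker\,\varphi\bul\varphi$. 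Je note aussi que $\varphi$ et $\varphi\bul$ sont chacune \lnl de rang $k$: en effet la formule de Binet--Cauchy du lemme~\ref{propGramCroise} \'ecrit $a_k=\sum\mu\bul_{\beta,\alpha}\,\mu_{\alpha,\beta}$, d'o\`u $a_k\in\cD_k(\varphi)\cap\cD_k(\varphi\bul)$; l'inversibilit\'e de $a_k$ donne $\cD_k(\varphi)=\cD_k(\varphi\bul)=\gen{1}$, ce qui, joint \`a $\cD_{k+1}=0$, est exactement le crit\`ere du \tho \ref{theoremIFD2}. Ainsi $\Im\,\varphi$, $\Im\,\varphi\bul$, $\Ker\,\varphi$ et $\Ker\,\varphi\bul$ sont tous facteurs directs, avec $\Im\,\varphi$ et $\Im\,\varphi\bul$ de rang $k$.

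Il reste \`a identifier ces facteurs avec ceux produits par le \tho \ref{thCHam}. On a toujours $\Im\,\varphi\varphi\bul\subseteq\Im\,\varphi$ et $\Ker\,\varphi\bul\subseteq\Ker\,\varphi\varphi\bul$. Or $\Im\,\varphi\varphi\bul$ et $\Im\,\varphi$ sont deux facteurs directs de $F$ de rang $k$, donc le corollaire~\ref{corEpiRC} force $\Im\,\varphi\varphi\bul=\Im\,\varphi$; de m\^{e}me $\Ker\,\varphi\bul$ et $\Ker\,\varphi\varphi\bul$ sont des facteurs directs de $F$ dont les suppl\'ementaires sont de rang $k$ (isomorphes respectivement \`a $\Im\,\varphi\bul$ et $\Im\,\varphi\varphi\bul$), donc le corollaire~\ref{corEpiRC} force $\Ker\,\varphi\bul=\Ker\,\varphi\varphi\bul$. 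En rempla\c{c}ant dans la d\'ecomposition de $F$ on obtient $F=\Im\,\varphi\oplus\Ker\,\varphi\bul$, et l'argument sym\'etrique sur $\varphi\bul\varphi$ donne $E=\Im\,\varphi\bul\oplus\Ker\,\varphi$; comme $\Im\,\varphi$ est \pro de rang $k$, $\varphi$ et $\varphi\bul$ sont \cros de rang $k$. L'obstacle principal est pr\'ecis\'ement cette derni\`ere identification des facteurs: aucune des deux inclusions ci-dessus n'est une \'egalit\'e pour des raisons ensemblistes, et c'est l'hypoth\`ese d'\'egalit\'e des rangs, exploit\'ee via le corollaire~\ref{corEpiRC}, qui transforme ces inclusions en \'egalit\'es --- c'est l\`a que l'inversibilit\'e de $a_k$ (via la simplicit\'e locale) joue son v\'eritable r\^{o}le.
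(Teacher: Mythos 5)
Votre preuve est correcte et suit essentiellement la m\^{e}me d\'emarche que celle du papier~: pour $2\Rightarrow 1$, on \'etablit que $\varphi$ et $\varphi\bul$ sont \lnls de rang $k$ via Binet--Cauchy et l'inversibilit\'e de $a_k$, on applique le \tho \ref{thCHam} aux \endos $\varphi\varphi\bul$ et $\varphi\bul\varphi$, puis on identifie $\Im\,\varphi\varphi\bul=\Im\,\varphi$ et $\Ker\,\varphi\bul=\Ker\,\varphi\varphi\bul$ gr\^{a}ce au corollaire \ref{corEpiRC}. Votre traitement explicite de $1\Rightarrow 2$ reprend d'ailleurs exactement la discussion qui pr\'ec\`ede l'\'enonc\'e dans la section \ref{LCPTF}, que le papier consid\`ere comme acquise (\gui{il faut montrer la r\'eciproque}).
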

\begin{proof}{Preuve}
Il faut montrer la r\'eciproque. On a $a_k\in\cD_k(\varphi \varphi\bul)\subset 
\cD_k(\varphi)\, \cD_k(\varphi\bul)$. Puisque $a_k$ est inversible 
$\cD_k(\varphi)= \cD_k(\varphi\bul)=\gen{1}$ donc $\varphi$ et $\varphi\bul$ sont
\lnls de rang $k$.\\
Le \tho \ref{thCHam} montre en outre que $\varphi \varphi\bul:F\rightarrow F$ 
est \cro avec elle m\^{e}me, de rang $k$. M\^{e}me chose
pour $\varphi\bul \varphi:E\rightarrow E$. 

\noi On a donc la situation suivante: $F_1=\Im\,\varphi\, \varphi\bul\subset 
F_2=\Im\,\varphi\subset F$  sont
2 modules de rang $k$ en facteur direct dans $F$. On peut donc appliquer 
le corollaire \ref{corEpiRC}:  $F_1=F_2$. 
Ainsi $\Im\,\varphi \,\varphi\bul =\Im\,\varphi$ et sym\'etriquement
 $\Im\,\varphi\bul \,\varphi =\Im\,\varphi\bul$.

\noi De la m\^{e}me fa\c{c}on on a  $K_1=\Ker\, \varphi\bul\subset 
K_2=\Ker\,\varphi\,\varphi\bul\subset E$. Ils sont tous deux en facteur direct 
avec un suppl\'ementaire de rang $k$. On peut donc appliquer 
le corollaire \ref{corEpiRC}:  $K_1=K_2$.  

\noi Finalement on obtient $E=\Im\,\varphi\bul\oplus\Ker\,\varphi$ et 
$F=\Im\,\varphi\oplus\Ker\,\varphi\bul$.
\end{proof}

\subsection{Calcul th\'eorique d'un \ingz: le cas du rang constant}      
\label{IVGNRC}

En utilisant le \tho de Cayley-Hamilton on d\'emontre comme pour le \tho 
\ref{thCHam} le r\'esultat suivant. 
\begin{theorem} 
\label{thIgRangr} 
{\rm  (\prns sur l'image et sur le 
noyau et \ing en rang constant $k$)}\\
 Si  $\varphi$ et $\varphi\bul$ sont \cros de rang $k$, avec $a_j=\rd j(\varphi 
\varphi\bul)$, on a:
\begin{enumerate}
\item L'\ing de $\varphi$ via $\varphi\bul$ 
est donn\'e par 
\begin{equation} \label{eqIg2}
\psi =\Ig(\varphi,\varphi\bul)=\varphi^{\dag_{\varphi\bul}}=a_k^{-1}\left(a_{k-
1}\varphi \bul-a_{k-2}\varphi \bul\varphi \varphi \bul  + \cdots+
(-1)^{k-1}(\varphi \bul\varphi )^{k-1}\varphi \bul\right) .
\end{equation}
\item La  \prn  
sur le sous-espace
$I=\Im\,\varphi  \subseteq F$ \paralm \`a 
$\Ker\,\varphi \bul$
est \'egale \`a  $\pi_I=\varphi \,\psi.$
\item La  \prn  sur le sous-espace
$I\bul=\Im\,\varphi \bul \subseteq E$ 
 \paralm \`a $\Ker\,\varphi $
est \'egale \`a $\pi_{I\bul}=\psi \,\varphi.$
Et la \prn sur le noyau de $\varphi$ 
 \paralm \`a $\Im\,\varphi \bul$ est 
$\Id_{E}-\pi_{I\bul}$.
\end{enumerate}
\end{theorem}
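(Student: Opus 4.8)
The plan is to reduce the whole statement to Theorem~\ref{thCHam} applied to the two composites $\varphi\varphi\bul:F\to F$ and $\varphi\bul\varphi:E\to E$, which is exactly the ``same proof'' alluded to in the preamble. First I would collect the facts already at hand: since $\varphi$ and $\varphi\bul$ are crossed of rank $k$, Lemma~\ref{lemCroCro} together with the rank computation opening Section~\ref{LCPTF} shows that $\varphi\varphi\bul$ is crossed with itself of rank $k$, with $\rd j(\varphi\varphi\bul)=a_j$ and $a_k$ invertible; moreover the identities~(\ref{SDO2}) give $\Im\,\varphi\varphi\bul=\Im\,\varphi$ and $\Ker\,\varphi\varphi\bul=\Ker\,\varphi\bul$. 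Symmetrically, $\varphi\bul\varphi$ is crossed with itself of rank $k$, with the same coefficients $\rd j(\varphi\bul\varphi)=a_j$, and with $\Im\,\varphi\bul\varphi=\Im\,\varphi\bul$ and $\Ker\,\varphi\bul\varphi=\Ker\,\varphi$.

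Next I introduce the endomorphism $S=a_{k-1}\Id_F-a_{k-2}\,\varphi\varphi\bul+\cdots+(-1)^{k-1}(\varphi\varphi\bul)^{k-1}$ of $F$. Applying formula~(\ref{eqthCHam}) of Theorem~\ref{thCHam} to $\varphi\varphi\bul$ yields $a_k\,\pi_I=\varphi\varphi\bul\,S$, where $\pi_I$ is the projection onto $\Im\,\varphi\varphi\bul=\Im\,\varphi$ parallel to $\Ker\,\varphi\varphi\bul=\Ker\,\varphi\bul$. I then set $\psi=a_k^{-1}\,\varphi\bul\,S$; pulling the factor $\varphi\bul$ to the left and using the elementary intertwining identity $\varphi\bul(\varphi\varphi\bul)^j=(\varphi\bul\varphi)^j\varphi\bul$ shows that this $\psi$ is precisely the map displayed in~(\ref{eqIg2}). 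Since $\varphi\,\psi=a_k^{-1}\,\varphi\varphi\bul\,S=\pi_I$, assertion~2 follows at once.

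For assertion~3 I would use the twin identity $(\varphi\varphi\bul)^j\varphi=\varphi(\varphi\bul\varphi)^j$, which gives $S\varphi=\varphi\,\wt S$ with $\wt S=a_{k-1}\Id_E-a_{k-2}\,\varphi\bul\varphi+\cdots+(-1)^{k-1}(\varphi\bul\varphi)^{k-1}$. Hence $\psi\,\varphi=a_k^{-1}(\varphi\bul\varphi)\,\wt S$, and Theorem~\ref{thCHam} applied this time to $\varphi\bul\varphi$ identifies the right-hand side with the projection $\pi_{I\bul}$ onto $\Im\,\varphi\bul\varphi=\Im\,\varphi\bul$ parallel to $\Ker\,\varphi\bul\varphi=\Ker\,\varphi$; its complement $\Id_E-\pi_{I\bul}$ is then the projection onto $\Ker\,\varphi$ parallel to $\Im\,\varphi\bul$. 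Having exhibited $\varphi\psi$ and $\psi\varphi$ as the two required projections, the uniqueness clause of Theorem~\ref{thCrIvg} gives $\psi=\Ig(\varphi,\varphi\bul)$, which is assertion~1.

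The computations are routine; the one point that needs care is the bookkeeping that pulls the factor $\varphi\bul$ (resp.\ $\varphi$) out of the Cayley--Hamilton polynomial in $\varphi\varphi\bul$ and rewrites it as a polynomial in $\varphi\bul\varphi$. Everything else is a verbatim transcription of the proof of Theorem~\ref{thCHam}, carried out on the two composites instead of on a single endomorphism.
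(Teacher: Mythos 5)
Your proposal is correct and is essentially the paper's own approach: the paper's entire proof consists of the one-line indication that the result is established \gui{comme pour le \tho \ref{thCHam}} via Cayley--Hamilton, and your argument --- applying formula (\ref{eqthCHam}) to the two composites $\varphi\varphi\bul$ and $\varphi\bul\varphi$ (legitimate by Lemma \ref{lemCroCro}, the identities (\ref{SDO2}) and the rank discussion opening Section \ref{LCPTF}), then identifying $\psi$ through the uniqueness clause of Theorem \ref{thCrIvg} --- is a faithful, fully detailed implementation of exactly that indication. The only difference is cosmetic: you invoke Theorem \ref{thCHam} as a black box on the two composites rather than re-running its restriction-to-the-image argument on the pair $(\varphi,\varphi\bul)$, which if anything makes the reduction cleaner.
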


On obtient aussi l'\'equivalence g\'en\'erale suivante: 
\begin{theorem} 
\label{corthIgRangr} 
Notons $a_j=\rd j(\varphi\varphi\bul)$.
\Propeq
\begin{enumerate}
\item  $\varphi$ et $\varphi\bul$ sont crois\'ees de rang $k$.
\item  $a_h=0$ pour $h>k$, $a_k$ est inversible et, en d\'efinissant $\theta$ par

\sni\centerline{$
\theta =a_{k-1}\, \varphi \bul
-a_{k-2}\,\varphi \bul\varphi \varphi \bul+
\cdots +(-1)^{k-1}\varphi \bul(\varphi \varphi \bul)^{k-1}$,}

\sni on a les deux \egts $\varphi\,\theta\,\varphi=a_k\,\varphi$ et 
$\varphi\bul\,\varphi \,\theta =a_k\,\varphi\bul$.
\end{enumerate}
\end{theorem}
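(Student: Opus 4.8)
The plan is to prove the two implications separately, after the preliminary remark that the endomorphism $\theta$ of the statement is exactly $\Adj^{(k)}_{\varphi\bul}(\varphi)$ of Notation~\ref{notaGenAdj}: the two written expressions differ only through the associativity identity $(\varphi\bul\varphi)^{i}\varphi\bul=\varphi\bul(\varphi\varphi\bul)^{i}$. I will use freely the two resulting factorisations $\theta=P(\varphi\bul\varphi)\,\varphi\bul=\varphi\bul\,P(\varphi\varphi\bul)$, where $P(t)=a_{k-1}-a_{k-2}\,t+\cdots+(-1)^{k-1}t^{k-1}$.

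For $1\Rightarrow 2$, Theorem~\ref{thCroiRgCst} turns hypothesis~1 into: $\varphi$ and $\varphi\bul$ of rank $\leq k$ with $a_k$ invertible. Rank $\leq k$ means $\cD_{k+1}(\varphi)=0$, hence $\cD_h(\varphi)=0$ for $h>k$; since $a_h=\rd h(\varphi\varphi\bul)\in\cD_h(\varphi\varphi\bul)\subseteq\cD_h(\varphi)$, we get $a_h=0$ for $h>k$. The equality $\varphi\,\theta\,\varphi=a_k\,\varphi$ is then nothing but Theorem~\ref{thGENCRAM1} (formula~(\ref{eqGENCRAM1})), since $\theta=\Adj^{(k)}_{\varphi\bul}(\varphi)$ and $\varphi$ is of rank $\leq k$. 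For the second equality, Theorem~\ref{thIgRangr} (formula~(\ref{eqIg2})) identifies $a_k^{-1}\theta$ with $\psi=\Ig(\varphi,\varphi\bul)$; the third of the four characteristic equations~(\ref{eqIg1.1}), namely $\varphi\bul\,\varphi\,\psi=\varphi\bul$, multiplied by $a_k$, gives $\varphi\bul\,\varphi\,\theta=a_k\,\varphi\bul$.

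The substance is $2\Rightarrow 1$, which I would obtain \emph{equationally}, without first seeking a rank bound. Put $\psi=a_k^{-1}\theta$. The first hypothesis gives $\varphi\,\psi\,\varphi=\varphi$, so $\pi:=\varphi\,\psi$ is idempotent with $\Im\,\pi=\Im\,\varphi$. I claim $\Ker\,\pi=\Ker\,\varphi\bul$: from $\varphi\bul\,\pi=a_k^{-1}\varphi\bul\varphi\,\theta=\varphi\bul$ (second hypothesis) one has $\Ker\,\pi\subseteq\Ker\,\varphi\bul$, while the factorisation $\pi=a_k^{-1}\varphi\,P(\varphi\bul\varphi)\,\varphi\bul$ shows $\pi$ kills $\Ker\,\varphi\bul$, giving the reverse inclusion. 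Hence $F=\Im\,\varphi\oplus\Ker\,\varphi\bul$. Symmetrically $\rho:=\psi\,\varphi$ is idempotent; $\varphi\,\rho=\pi\,\varphi=\varphi$ yields $\Ker\,\rho=\Ker\,\varphi$, the factorisation $\theta=\varphi\bul\,P(\varphi\varphi\bul)$ yields $\Im\,\rho\subseteq\Im\,\varphi\bul$, and $\rho\,\varphi\bul=a_k^{-1}(\varphi\bul\varphi)\,P(\varphi\bul\varphi)\,\varphi\bul=\varphi\bul$ (second hypothesis again) yields $\Im\,\varphi\bul\subseteq\Im\,\rho$. Thus $E=\Ker\,\varphi\oplus\Im\,\varphi\bul$, and the two decompositions are precisely~(\ref{eqSDO}): $\varphi$ and $\varphi\bul$ are crossed.

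It remains to pin the rank to $k$, and this is the delicate point: the hypotheses in~2 control only the coefficients $a_h=\rd h(\varphi\varphi\bul)$, i.e. the traces of the exterior powers of $\varphi\varphi\bul$, and not the full determinantal ideals, so one may \emph{not} infer $\cD_{k+1}(\varphi)=0$ directly. Once crossedness is secured, however, $\varphi\varphi\bul$ is crossed with itself (Lemma~\ref{lemCroCro}), so by Theorem~\ref{thCHam} its image, equal to $\Im\,\varphi$ by~(\ref{SDO2}), is a direct factor of $F$ on which $\varphi\varphi\bul$ restricts to an automorphism $(\varphi\varphi\bul)_0$. Splitting $\A$ along the fundamental system of orthogonal idempotents attached to the rank of $\Im\,\varphi$ (Theorem~\ref{propPTFDec}), on the component of rank $j$ one has $\det(\Id_F+X\,\varphi\varphi\bul)=\det(\Id+X\,(\varphi\varphi\bul)_0)$, of degree exactly $j$ with invertible leading coefficient $a_j$; then $a_h=0$ for $h>k$ forbids $j>k$ and $a_k$ invertible forbids $j<k$. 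Hence $\Im\,\varphi$ has constant rank $k$. The whole difficulty is concentrated in exactly this passage from the trace-level data of~2 to the genuine rank, which only the idempotent bookkeeping available after crossedness has been established resolves.
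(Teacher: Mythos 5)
Your proof is correct, and on the crucial implication $2\Rightarrow 1$ it follows a genuinely different route from the paper's. The paper introduces, alongside $\psi=a_k^{-1}\theta$, the symmetric map $\psi\bul=a_k^{-1}\bigl(a_{k-1}\varphi-a_{k-2}\,\varphi\varphi\bul\varphi+\cdots+(-1)^{k-1}(\varphi\varphi\bul)^{k-1}\varphi\bigr)$ and observes that a short computation gives the six equalities (\ref{eqCrIvg2}); crossedness then follows from Proposition \ref{thCrIvg2} (quoted there without proof), which at the same time identifies $\psi$ and $\psi\bul$ as the two \ingsz. You instead inline that content: you verify the two splittings of (\ref{eqSDO}) directly, by producing the idempotents $\pi=\varphi\psi$ and $\rho=\psi\varphi$ and computing their images and kernels from the two hypotheses together with the commutations $\theta=P(\varphi\bul\varphi)\,\varphi\bul=\varphi\bul\,P(\varphi\varphi\bul)$ (with your polynomial $P$); this is self-contained, at the price of not exhibiting $\psi\bul$. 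More significantly, you supply explicitly a step the paper's proof leaves tacit: Proposition \ref{thCrIvg2} yields only crossedness, not the rank, and pinning the rank to $k$ requires precisely your final argument (the restriction $(\varphi\varphi\bul)_0$ of $\varphi\varphi\bul$ to $\Im\,\varphi$ is an automorphism, and the hypotheses on the $a_h$ annihilate every idempotent $r_j$, $j\neq k$, of $\rR{\Im\,\varphi}$), which is exactly the completion suggested by the discussion opening section \ref{LCPTF}. Your direction $1\Rightarrow 2$ coincides with what the paper regards as already acquired from Theorems \ref{thCroiRgCst} and \ref{thIgRangr}. Two small repairs: Theorem \ref{thGENCRAM1} is stated for \emph{free} modules, so for projective $E,F$ you should either reduce to representing matrices in the usual way or, more simply, obtain $\varphi\,\theta\,\varphi=a_k\,\varphi$ from the first equation of (\ref{eqIg1.1}) multiplied by $a_k$, exactly as you already do for the second equality; and the automorphism property of $(\varphi\varphi\bul)_0$ comes from crossedness itself (injectivity since $\Ker\,\varphi\varphi\bul\cap\Im\,\varphi\varphi\bul=0$, surjectivity via Proposition \ref{prop quot non iso}) rather than from Theorem \ref{thCHam} as cited.
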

\begin{proof}{Preuve}
Il reste \`a montrer que 2 implique 1. Posons

\sni\centerline{  $\psi =a_k^{-1}\theta\quad \mathrm{et}\quad \psi\bul=a_k^{-
1}\left(a_{k-1}\varphi \,-\,a_{k-2}\,\varphi \varphi\bul\varphi   \,+ \,\cdots+
(-1)^{k-1}(\varphi \varphi\bul )^{k-1}\varphi \right)$.~~~~~}

\sni Un calcul simple montre que les six \egts de la proposition \ref{thCrIvg2} 
sont satisfaites.
\end{proof}

On peut utiliser le test pr\'ec\'edent pour savoir si une \ali est crois\'ee avec 
elle-m\^{e}me et de rang $k$. Une l\'eg\`ere variante, sans doute plus efficace du 
point de vue du calcul est obtenue de fa\c{c}on analogue en s'appuyant sur le
\tho \ref{thCHam} et la proposition \ref{thCrIvg2bis}:

\begin{theorem} 
\label{corthIgRangr2} 
On suppose $F=E$. Notons $d_j=\rd j(\varphi)$. \Propeq
\begin{enumerate}
\item  $\varphi$ est crois\'ee avec elle-m\^{e}me, de rang $k$.
\item  $d_h=0$ pour $h>k$,  $d_ k$ est inversible et, en d\'efinissant $\pi$ par

\sni\centerline{$
\pi=d_{k-1}\,\varphi-d_{k-2}\,\varphi ^2+
\cdots +(-1)^{k-1}\varphi^k,\quad \quad ~$}

\sni on a les \egts $\pi\,\varphi=d_ k\,\varphi$ et $\pi^2=d_k\,\pi$.
\end{enumerate}
\end{theorem}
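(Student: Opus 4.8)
{Plan de preuve}
Le plan est de traiter s\'epar\'ement les deux implications, en s'appuyant comme annonc\'e sur le \tho \ref{thCHam} et la proposition \ref{thCrIvg2bis}. L'implication $1\Rightarrow 2$ se ram\`enera \`a une simple relecture du \tho \ref{thCHam}: celui-ci garantit d\'ej\`a que $d_k$ est inversible et que $\varphi$ est de rang $\leq k$, d'o\`u $d_h\in\cD_h(\varphi)\subseteq\cD_{k+1}(\varphi)=0$ pour $h>k$. Comme la formule (\ref{eqthCHam}) s'\'ecrit $\pi=d_k\,p$, o\`u $p$ d\'esigne la \prn sur $\Im\,\varphi$ \paralm \`a $\Ker\,\varphi$, et comme $p^2=p$ et $p\,\varphi=\varphi$, j'obtiendrai aussit\^ot $\pi\,\varphi=d_k\,p\,\varphi=d_k\,\varphi$ et $\pi^2=d_k^2\,p^2=d_k^2\,p=d_k\,\pi$.

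Pour $2\Rightarrow 1$, qui est le c\oe ur de l'\'enonc\'e, je poserais $p:=d_k^{-1}\,\pi$. La relation $\pi^2=d_k\,\pi$ donnera $p^2=p$, et $\pi\,\varphi=d_k\,\varphi$ donnera $p\,\varphi=\varphi$. Comme $\pi$ est un \pol en $\varphi$ dont chaque terme est divisible par $\varphi$, l'\endo $p$ commutera \`a $\varphi$, v\'erifiera aussi $\varphi\,p=\varphi$ et s'\'ecrira $p=\varphi\,r(\varphi)=r(\varphi)\,\varphi$. J'en d\'eduirai $\Im\,p=\Im\,\varphi$ (l'inclusion $\subseteq$ venant de $p=\varphi\,r(\varphi)$, l'inclusion inverse de $p\,\varphi=\varphi$) et de m\^{e}me $\Ker\,p=\Ker\,\varphi$; puisque $p$ est idempotent, $E=\Im\,p\oplus\Ker\,p=\Im\,\varphi\oplus\Ker\,\varphi$, c'est-\`a-dire que $\varphi$ est crois\'ee avec elle-m\^{e}me (proposition \ref{propAutocro}). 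De fa\c{c}on plus \'equationnelle, on peut aussi poser $\psi=d_k^{-1}\bigl(d_{k-1}\,p-d_{k-2}\,\varphi+\cdots+(-1)^{k-1}\varphi^{k-1}\bigr)$ \`a l'image de (\ref{eqthCHam2}) et v\'erifier directement les trois \'egalit\'es (\ref{eqCrIvg2bis}), pour conclure par la proposition \ref{thCrIvg2bis} que $\varphi$ est crois\'ee avec elle-m\^{e}me avec $\psi=\Ig(\varphi,\varphi)$.

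La difficult\'e principale sera d'\'etablir que le rang vaut \emph{exactement} $k$, car l'hypoth\`ese $d_h=0$ ($h>k$) ne contr\^ole a priori que les sommes de mineurs principaux, et non tout l'\idd $\cD_{k+1}(\varphi)$. Pour la lever, j'exploiterais la \deco pr\'ec\'edente: la restriction $\varphi_0:\Im\,\varphi\rightarrow\Im\,\varphi$ est injective (car $\Im\,\varphi\cap\Ker\,\varphi=0$) et, gr\^{a}ce \`a $\Im\,\varphi=\Im\,\varphi^2$ (cons\'equence de $p\,\varphi=\varphi$ et de la forme $p=\varphi\,r(\varphi)$), surjective, donc un \auto par la proposition \ref{prop quot non iso}. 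Sur $E=\Im\,\varphi\oplus\Ker\,\varphi$ l'\endo $\varphi$ est \gui{diagonal} $\varphi_0\oplus 0$, de sorte que $\rP{\varphi}=\rP{\varphi_0}=1+d_1X+\cdots+d_kX^k$ sera de degr\'e $k$ \`a \coe dominant $d_k$ inversible; le \tho \ref{propPTFDec} sur le \pol multiplicatif forcera alors $\Im\,\varphi$ \`a \^{e}tre de rang constant $k$, ce qui ach\`evera la preuve.
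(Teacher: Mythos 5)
Votre plan est correct et suit pour l'essentiel la voie que l'article indique lui-m\^{e}me~: $1\Rightarrow 2$ par simple relecture du \tho \ref{thCHam} (formules (\ref{eqthCHam}) et (\ref{eqthCHam2})), puis $2\Rightarrow 1$ en v\'erifiant les trois \'egalit\'es (\ref{eqCrIvg2bis}) pour conclure par la proposition \ref{thCrIvg2bis}, exactement comme la preuve du \tho \ref{corthIgRangr} le fait via la proposition \ref{thCrIvg2}. Votre troisi\`eme paragraphe explicite de surcro\^{\i}t un point que l'article laisse implicite --- \`a savoir que le rang vaut exactement $k$~: l'automorphisme $\varphi_0$ obtenu par la proposition \ref{prop quot non iso}, l'\'egalit\'e $\rP{\varphi}=\rP{\varphi_0}$, puis le \tho \ref{propPTFDec} appliqu\'e composante par composante du sfio de $\rR{\Im\,\varphi}$ --- ce qui est correct et utile.
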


Le \tho \ref{corthIgRangr} conduit au r\'esultat de complexit\'e suivant,
lorsque les modules $E$ et $F$ sont donn\'es par des matrices de projection $P_E$ 
et $P_F$  dont ils sont les images, et
$\varphi$ et $\varphi\bul$ sont donn\'ees par des matrices $A$ et $A\bul$ 
v\'erifiant $P_FAP_E=A$ et $P_EA\bul P_F=A\bul$. 

\begin{theorem} 
\label{thDecisionO} 
Soient $E$ et $F$ deux \Amos \ptfs engendr\'es par $n$ \elts (ou moins), et deux 
\alis $\varphi:E\rightarrow F$ et $\varphi\bul:F\rightarrow E$. Alors on peut, 
avec  $\cO(n^4)$
\oparisz, un test \gui{$1\in\gen{y}\,?$} et $\cO(n^2)$  tests \gui{$x=0\,?$}, 
d\'ecider si $\varphi$ et $\varphi\bul$ sont \cros et de rang $k$, et en cas de 
r\'eponse positive calculer les \ings $\Ig(\varphi,\varphi\bul)$ et 
$\Ig(\varphi\bul,\varphi)$ en utilisant $\cO(n^4)$ \oparisz. 
\junk{En outre tout le calcul peut \^{e}tre fait en $\cO(\log^2n)$ \'etapes de 
calcul \paralz.}
\end{theorem}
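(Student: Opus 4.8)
The plan is to reduce everything to the purely equational criterion of Theorem~\ref{corthIgRangr}, applied to matrices $A$ and $A\bul$ representing $\varphi$ and $\varphi\bul$ in the ambient free module $\A^n$ (with $P_FAP_E=A$ and $P_EA\bul P_F=A\bul$ as in the conventions preceding the statement), and then to check that each ingredient fits within the announced budget of $\cO(n^4)$ \oparisz, one test $1\in\gen{a_k}$, and $\cO(n^2)$ zero-tests.

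First I would compute the \coes $a_\ell$ of $\rP{\varphi\varphi\bul}(Z)=\det(\Id_F+Z\,\varphi\varphi\bul)$. A direct check from $P_FAP_E=A$ and $P_EA\bul P_F=A\bul$ shows that the $n\times n$ product $AA\bul$ kills a complement of $F$ and restricts to $\varphi\varphi\bul$ on $F$, so by Definition~\ref{notaDk} one has $\det(\Id_F+Z\,\varphi\varphi\bul)=\det(\I_n+Z\,AA\bul)$. Thus the $a_\ell$ are, up to sign, the \coes of the characteristic polynomial of $M=AA\bul$. Forming $M$ costs $\cO(n^3)$, and extracting all the $a_\ell$ costs $\cO(n^4)$. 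Next I would pin down the only possible rank: if the hypotheses of Theorem~\ref{corthIgRangr}(2) hold for some $k$, then $a_h=0$ for $h>k$ while $a_k$ is inversible, so necessarily $k=\deg_Z\rP{\varphi\varphi\bul}$. Hence there is a single candidate. Using the zero-tests $a_n=0?,\,a_{n-1}=0?,\ldots$ from the top (at most $n$ of them) I locate the largest $k$ with $a_k\neq 0$, then perform the single membership test $1\in\gen{a_k}$ to decide whether $a_k$ is inversible, recovering $u=a_k^{-1}$ from the positive answer; if $a_k$ is not inversible the maps cannot be \cros of rank $k$ and we stop.

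It then remains to certify condition~2 by its two matrix \idtsz. I would build the matrix of $\theta=a_{k-1}\varphi\bul-a_{k-2}\varphi\bul\varphi\varphi\bul+\cdots+(-1)^{k-1}\varphi\bul(\varphi\varphi\bul)^{k-1}$ as the corresponding alternating sum of products of $A$ and $A\bul$, accumulated via $\cO(k)\le\cO(n)$ matrix multiplications and hence in $\cO(n^4)$, and then check $\varphi\theta\varphi=a_k\varphi$ and $\varphi\bul\varphi\theta=a_k\varphi\bul$ entrywise; each comparison of two $n\times n$ matrices uses $\cO(n^2)$ zero-tests, giving $\cO(n^2)$ in total. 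By Theorem~\ref{corthIgRangr}, these two equalities together with $a_k$ inversible and $a_h=0$ for $h>k$ are exactly equivalent to \gui{$\varphi$ and $\varphi\bul$ \cros de rang $k$}. In the positive case formula~(\ref{eqIg2}) gives $\Ig(\varphi,\varphi\bul)=a_k^{-1}\theta=u\,\theta$ directly, and the symmetric expression yields $\Ig(\varphi\bul,\varphi)$, each merely a scaling of an already-computed matrix.

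I expect the one genuinely delicate point to be the computation of the $a_\ell$ over an \emph{arbitrary} commutative ring: the familiar trace-power recursion divides by $1,2,\ldots,n$ and is therefore unavailable, so one must invoke a truly division-free $\cO(n^4)$ algorithm for the characteristic polynomial (a Berkowitz-type method) and confirm its cost stays within the stated bound. Everything else is bookkeeping, namely the passage between the \endosz of the \mptfs $E,F$ and their ambient matrix representatives (legitimized by $\cD_k(\varphi)=\cD_k(\varphi')$ and the relations $P_FAP_E=A$, $P_EA\bul P_F=A\bul$), and the remark that the degenerate cases (trivial ring, non-reduced ring) need no separate treatment since Theorem~\ref{corthIgRangr} already accommodates them.
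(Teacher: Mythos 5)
Your proposal is correct and follows exactly the route the paper intends: the paper gives no detailed proof of this theorem, simply stating that it follows from Theorem~\ref{corthIgRangr} once $\varphi$ and $\varphi\bul$ are represented by matrices $A$, $A\bul$ with $P_FAP_E=A$ and $P_EA\bul P_F=A\bul$, which is precisely your reduction (characteristic polynomial of $AA\bul$ in $\cO(n^4)$ division-free operations, unique candidate rank $k=\deg_Z\rP{\varphi\varphi\bul}$ located by zero-tests, one invertibility test on $a_k$, verification of the two identities of Theorem~\ref{corthIgRangr}, and $\Ig(\varphi,\varphi\bul)=a_k^{-1}\theta$ via formula~(\ref{eqIg2})). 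Your added care about the division-free computation of the $a_\ell$ and about the passage between endomorphisms of the projective modules and their ambient matrix representatives makes explicit what the paper leaves implicit.
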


\subsection{Cas o\`u le rang n'est pas constant}      
\label{subsecGenRangNonConst}

Les r\'esultats de la section \ref{IVGNRC} se g\'en\'eralisent en cassant l'anneau 
en des composantes convenables donn\'ees par un sfio.

L'id\'ee g\'en\'erale est la suivante: si $\rR{\Im\,\varphi}(X)=\sum_i r_i X^i$ et 
$\rP{\varphi \varphi\bul}(Z)=1+ a_1 Z+ \cdots + a_n Z^n=P(Z)$, chaque \pol 
$P_k=r_kP$ doit \^{e}tre de degr\'e $k$ avec pour \coe de $Z^k$ un \elt inversible 
dans $\A[1/r_k]\simeq r_k\A$ et ceci permet de calculer les $r_i$ lorsqu'on 
conna\^{\i}t $P$.  Plus pr\'ecis\'ement
\begin{itemize}
\item On peut retrouver les $r_i$ \`a partir des $a_i$. Par exemple
on doit avoir
$\gen{r_n}=\gen{a_n}=\gen{a_n^2}$: on teste si $\gen{a_n}=\gen{a_n^2}$, en cas de 
r\'eponse positive avec $a_n=b_na_n^2$, alors $r_n=a_nb_n$, 
puis on recommence avec $(1-r_n)P$ pour trouver
$r_{n-1}$ et ainsi de suite.  
\item On pose $\varphi_k=r_k\,\varphi$ et $\varphi_k\bul=r_k\,\varphi\bul$. Par le 
\tho \ref{thDecisionO} on peut tester si $\varphi_k$ et $\varphi\bul_k$ sont 
crois\'ees et de rang $k$ sur l'anneau $\A[1/r_k]$, et en cas de r\'eponse 
positive calculer l'\ing $\Ig(\varphi_k,\varphi_k\bul)$.
\item On termine en recollant tout ceci: $\Ig(\varphi,\varphi\bul)= 
\sum_{k=1}^nr_k\,\Ig(\varphi_k,\varphi_k\bul).$
\end{itemize}

Le calcul le plus long dans toute cette affaire est celui du \pol $P$ 
qui se fait en $\cO(n^4)$ \oparis dans $\A$. 

La proc\'edure enti\`ere est explicite si on dispose d'un test de divisibilit\'e 
dans $\A$,
c.-\`a-d. un test pour \gui{$x\in\gen{y}\,?$} qui donne un $z$ tel que $x=zy$ en 
cas de r\'eponse positive. En faisant \gui{$x-y\in\gen{0}\,?$}
on a aussi un test pour l'\egt dans $\A$.

\smallskip On r\'esume la situation dans le \tho suivant, en supposant que les 
modules $E$ et $F$ sont donn\'es par des matrices de projection $P_E$ et $P_F$  
dont ils sont les images, et que
$\varphi$ et $\varphi\bul$ sont donn\'ees par des matrices $A$ et $A\bul$ 
v\'erifiant $P_FAP_E=A$ et $P_EA\bul P_F=A\bul$.   

\begin{theorem} 
\label{thDecision1} 
Soient $E$ et $F$ deux \Amos \ptfs engendr\'es par $n$ \elts (ou moins), et deux 
\alis $\varphi:E\rightarrow F$ et $\varphi\bul:F\rightarrow E$. Alors on peut, 
avec un nombre
d'\oparis en $\cO(n^4)$, et un nombre de tests \gui{$x\in\gen{y}\,?$} en 
$\cO(n^3)$, d\'ecider si $\varphi$ et $\varphi\bul$ sont \crosz, et en cas de 
r\'eponse positive calculer les \ings $\Ig(\varphi,\varphi\bul)$ et 
$\Ig(\varphi\bul,\varphi)$ en $\cO(n^4)$ \oparisz. 
\end{theorem}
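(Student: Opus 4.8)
{Preuve}
Le plan est de se ramener au cas du rang constant (\tho \ref{thDecisionO}) en cassant $\A$ en les composantes o\`u $\Im\,\varphi$ est de rang constant. On travaille avec les matrices $A$ et $A\bul$ v\'erifiant $P_FAP_E=A$ et $P_EA\bul P_F=A\bul$. Je commencerais par calculer le \pol $P(Z)=\rP{\varphi\varphi\bul}(Z)=1+a_1Z+\cdots+a_nZ^n$: d'apr\`es la \dfn \ref{notaDk} il s'agit d'un seul \deter $\det(\I+Z\,\cdot)$, dont le calcul co\^ute $\cO(n^4)$ \oparis (par exemple par la m\'ethode de Souriau--Faddeev, qui fournit au passage les puissances dont nous aurons besoin). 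Ce sera le calcul dominant. Comme $\rP{\varphi\varphi\bul}=\rP{\varphi\bul\varphi}$, les m\^emes $a_k$ serviront pour $\Ig(\varphi,\varphi\bul)$ et pour $\Ig(\varphi\bul,\varphi)$.

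Ensuite je reconstruirais le syst\`eme fondamental d'\idms \orts $(r_k)$ attach\'e au \pol multiplicatif $\rR{\Im\,\varphi}$ directement \`a partir des $a_k$, comme esquiss\'e \`a la section \ref{subsecGenRangNonConst}. L'observation cl\'e est que si $\varphi$ et $\varphi\bul$ sont \crosz, alors sur la composante de rang $k$ le \coe $a_k$ est inversible tandis que $a_h=0$ pour $h>k$ (\tho \ref{corthIgRangr} et lemme \ref{propGramCroise}); comme $a_n=r_na_n$, ceci donne $\gen{a_n}=\gen{r_n}$, un id\'eal \idmz. Je testerais donc si $\gen{a_n}$ est \idm par l'unique test de divisibilit\'e \gui{$a_n\in\gen{a_n^2}$?}; en cas de succ\`es $a_n=b_na_n^2$ fournit l'\idm $r_n=a_nb_n$, et je recommencerais avec $(1-r_n)P$ (de degr\'e $\leq n-1$) pour extraire $r_{n-1}$, et ainsi de suite jusqu'\`a $r_1$. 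Ceci consomme $\cO(n)$ tests de divisibilit\'e. L'\'echec d'un de ces tests d'idempotence certifie, via l'\'equivalence \ref{a'}$\Leftrightarrow$\ref{g'} du \tho \ref{theoremIFD2}, que $\varphi$ et $\varphi\bul$ ne sont pas \crosz.

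Sur chaque composante $\A[1/r_k]\simeq r_k\A$ le couple $\varphi_k=r_k\varphi$, $\varphi_k\bul=r_k\varphi\bul$ est de rang constant $k$, et j'appliquerais le \tho \ref{thDecisionO}: on teste s'il est \cro de rang $k$ et, en cas de succ\`es, on calcule $\Ig(\varphi_k,\varphi_k\bul)=a_k^{-1}\theta_k$ avec $\theta_k=\sum_{j=0}^{k-1}(-1)^ja_{k-1-j}(\varphi\bul\varphi)^j\varphi\bul$ (o\`u $a_k^{-1}$ est fourni par le $b_k$ ci-dessus, puisque $r_k=a_kb_k$). On recolle par $\Ig(\varphi,\varphi\bul)=\sum_kr_k\,\Ig(\varphi_k,\varphi_k\bul)$ et sym\'etriquement pour $\Ig(\varphi\bul,\varphi)$; la \cro \'etant une propri\'et\'e locale (\tho \ref{theoremIFD2}, \ref{i'}$\Leftrightarrow$\ref{a'}), la r\'eponse \gui{oui} sur toutes les composantes \'equivaut \`a la \cro globale. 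En r\'eorganisant la double somme, on obtient $\Ig(\varphi,\varphi\bul)=\sum_{j\geq0}(-1)^jc_j\,(\varphi\bul\varphi)^j\varphi\bul$ o\`u les scalaires $c_j=\sum_{k>j}r_kb_ka_{k-1-j}$ se calculent en $\cO(n^2)$.

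Le point d\'elicat est la comptabilit\'e de la complexit\'e: une application na\"ive du \tho \ref{thDecisionO} \`a chacune des $n$ composantes co\^uterait $\cO(n^5)$ \oparisz. L'astuce est que les deux seuls calculs r\'eellement co\^uteux --- le \pol $P$ et les matrices $(\varphi\bul\varphi)^j\varphi\bul$ et $(\varphi\varphi\bul)^j\varphi$ pour $0\leq j<n$ --- sont effectu\'es une seule fois, globalement, chacun en $\cO(n^4)$, puis partag\'es entre toutes les composantes; tout ce qui reste par composante (assemblage des $\theta_k$, \colis d'\idmsz, v\'erifications) n'est qu'une \coli de matrices d\'ej\`a calcul\'ees ou un nombre born\'e de produits matriciels, d'o\`u un total de $\cO(n^4)$ \oparisz. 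Les tests, en revanche, s'accumulent: $\cO(n^2)$ tests \gui{$x=0$?} par composante issus du \tho \ref{thDecisionO}, sur $n$ composantes, plus les $\cO(n)$ tests d'idempotence, soit $\cO(n^3)$ tests \gui{$x\in\gen{y}$?} au total, ce qui est la borne annonc\'ee.
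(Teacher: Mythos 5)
Your proposal is correct and follows essentially the same route as the paper's own (informal) proof in the surrounding section: compute $\rP{\varphi\varphi\bul}$ once in $\cO(n^4)$ operations, extract the fundamental system of orthogonal idempotents $r_k$ from the coefficients $a_k$ by successive divisibility tests \guig $a_k\in\gen{a_k^2}\,$?\guid, apply Theorem~\ref{thDecisionO} on each component $\A[1/r_k]$, and glue via $\Ig(\varphi,\varphi\bul)=\sum_k r_k\,\Ig(\varphi_k,\varphi_k\bul)$. Your explicit bookkeeping --- showing that the only expensive computations (the polynomial $P$ and the matrices $(\varphi\bul\varphi)^j\varphi\bul$, $(\varphi\varphi\bul)^j\varphi$) are done once globally and shared among the components, so that the total stays in $\cO(n^4)$ operations and $\cO(n^3)$ tests --- merely makes precise what the paper asserts when it says the dominant cost is the computation of~$P$.
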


\junk{
\subsubsection*{Calculs \parals} 

Voici maintenant une br\`eve consid\'eration sur le parall\'elisme, qui a motiv\'e 
le travail de Mulmuley. 

On suppose les modules libres de dimension $\leq n$. Est-ce que dans le \tho 
\ref{thDecision1} on peut faire les tests et les calculs en profondeur 
$\cO(\log^2n)$ (ce qui est la profondeur usuelle des
\algos \gui{bien parall\'elis\'es})? 

Le calcul du \polcar peut se faire en $\cO(\log^2n)$ \'etapes \paralsz. Ensuite, 
le test que nous avons d\'ecrit pour savoir si le \polcar $P(Z)=1+ a_1 Z+ \cdots + 
a_m Z^m$ convient, est un test de nature s\'equentielle et r\'eclame une 
profondeur en $\cO(n)$.

N\'eanmoins si on dispose d'un test d'appartenance \`a un \itf   on peut
r\'ealiser le test en $\cO(\log^2n)$  \'etapes \parals comme suit.

On consid\`ere les \itfs $I_k=\gen{a_k,a_{k+1},\ldots ,a_n}$.
Ils doivent \^{e}tre \idms et ceci peut se tester en  $\cO(\log^2n)$  \'etapes 
\parals selon la proc\'edure indiqu\'ee au lemme \ref{lemIdIdm}. En cas de 
r\'eponses toutes positives, on obtient
donc les \idms $r_n$,  $r_n+r_{n-1}$, \ldots , $r_n+\cdots +r_{1}$.
On en d\'eduit en une \'etape \paral les \idms $r_k$ et on est 
ensuite ramen\'e \`a ex\'ecuter $n$ fois en \paral l'\algo du 
\tho~\ref{thDecisionO}.

On obtient donc la variante suivante du \tho \ref{thDecision1}.

\begin{theorem} 
\label{thDecision1bis} 
On suppose que l'anneau est $\cO(n^s)$-fortement discret.
Alors les objectifs r\'ealis\'es au \tho \ref{thDecision1}
peuvent \'egalement \^{e}tre obtenus par un \algo en $\cO(\log^2n)$ \'etapes 
\parals et avec un co\^{u}t suppl\'ementaire de  $\cO(n^{2s+2})$ op\'erations 
\elrsz.
\end{theorem}

}

\section{Calcul pratique d'un \ing s'il en existe un.} 
\label{subsecCRIG}

Dans cette section nous g\'en\'eralisons au cas 
d'un anneau commutatif $\A$
le travail que nous avons fait dans \cite{DiGL} en vue de la r\'esolution uniforme 
des \slis sur un corps arbitraire, en nous appuyant sur un calcul uniforme du rang 
d'une matrice d\^{u} \`a Mulmuley~\cite{Mul}.
Il s'agit de la possibilit\'e de calculer efficacement un \ing d'une \ali
entre \Amos libres lorsqu'il en existe un. En termes plus abstraits:
lorsqu'on conna\^{\i}t une matrice de pr\'esentation pour un module $E$,
on est capable de tester si ce module est \ptf et en cas de r\'eponse positive, de 
fournir une matrice de \prn dont l'image est isomorphe \`a $E$ (l'\iso est 
explicite). Tout ceci avec des calculs assez efficaces, \cad ici \emph{en temps 
polynomial}.

\smallskip Consid\'erons une \ali $\varphi$ entre deux \Amos libres $E$ et $F$ de 
dimensions respectives $n$ et $m$.
Notre but est de donner un test pour savoir si $\varphi$ est \lnlz, et, en cas de 
r\'eponse positive, de calculer en temps polynomial un \ing de la matrice.

Bien que nous traitions uniquement le cas des modules libres, il ne serait pas
difficile de g\'en\'eraliser au cas o\`u $E$ et $F$ sont des \mptfsz.

\smallskip Nous nous limitons au point de vue purement 
matriciel, (c'est le point de vue o\`u des bases ont \'et\'e 
fix\'ees dans $E$ et $F$). Nous introduisons une \idtr $t$.
Nous consid\'erons une forme quadratique $\Phi_{t,n}$ sur
$E'=\A(t)^n$ et une forme quadratique $\Phi_{t,m}$ sur
$F'=\A(t)^m$:  
$$
\begin{array}{rcl} 
\Phi_{t,n}(\xi_1,\ldots ,\xi_n)& =  &{\xi_1}^2+t\, {\xi_2}^2 
+ \cdots + t^{n-1}\,{\xi_n}^2    \\[1mm] 
\Phi_{t,m}(\zeta_1,\ldots ,\zeta_m)& =  & {\zeta_1}^2 + 
t\,{\zeta_2}^2 + \cdots + t^{m-1}\,{\zeta_m}^2    
\end{array}
$$

Nous notons les \gui{produits scalaires}  correspondants par
$\gen{\cdot,\cdot}_{E'}^{t}$ et 
$\gen{\cdot,\cdot}_{F'}^t$. Nous notons $Q_n$
et $Q_m$ les matrices (diagonales) de ces formes sur les 
bases canoniques.

L'\ali $\varphi:E\rightarrow F $ donne lieu \`a une 
\ali $E'\rightarrow F'$ que nous notons encore 
$\varphi$ et qui est d\'efinie par la m\^{e}me matrice 
sur les bases canoniques. Il existe alors une unique
\ali  $\varphi\cir:F'\rightarrow E' $ v\'erifiant:
\begin{equation} \label{eqApAdjbis}
\forall x\in E'\quad \forall y\in F'\quad \quad 
\gen{\varphi(x),y 
}_{F'}^t=\gen{x,\varphi\cir(y)}_{E'}^{t}
\end{equation}
La matrice $A\cir$ de $\varphi\cir$ 
sur les bases canoniques est alors
\begin{equation} \label{eqDefAcirc}
A\cir\,=\, {Q_n}^{-1}\,\tra{\!A}\;Q_m\,,
\end{equation}
puisqu'on doit avoir pour tous 
$X\in\A(t)^{n{\times}1},\;Y\in\A(t)^{m{\times}1}$:
$ 
\tra{(A\,X)}\,Q_m\,Y\;=\tra{\!X}\,{Q_n}\,(A\cir\,Y).$

On v\'erifie que $(AB)\cir=B\cir \,A\cir$ et $(A\cir)\cir=A$.

En pratique si ${A}=(a_{i,j})$ on obtient
$A\cir=(t^{j-i}\,a_{j,i})$, par exemple:
$$ A\;=\;\cmatrix{
a_{11}&a_{12}&a_{13}&a_{14}&a_{15}\cr
a_{21}&a_{22}&a_{23}&a_{24}&a_{25}\cr
a_{31}&a_{32}&a_{33}&a_{34}&a_{35}
}
,\quad A\cir\;=\;\cmatrix{
a_{11}&t\,a_{21}&t^2\,a_{31}\cr
t^{-1}\,a_{12}&a_{22}&t\,a_{32}\cr
t^{-2}\,a_{13}&t^{-1}\,a_{23}&a_{33}\cr
t^{-3}\,a_{14}&t^{-2}\,a_{24}&t^{-1}\,a_{34}\cr
t^{-4}a_{15}&t^{-3}a_{25}&t^{-2}a_{35}
}
$$

\subsection{Id\'eaux de Gram  et \idds} 
\label{subsecIGID}

\begin{definition} 
\label{defIG} 
Soit  une matrice $A\in\A^{m\times n}$.
On d\'efinit les \pols de Laurent $\,\cG'_{k}(A)(t)=g_{k}(t)\in\A[t,1/t]$, 
 et  les {\em \coes de Gram \gnes de $A$},    
$\,\cG'_{k,\ell}(A)=g_{k,\ell}$ comme suit:
\begin{equation} \label{eqDefGrambis}
\left\{
\begin{array}{rcll} 
\rP{AA\cir}(Z)&  = & 1 \, + \, g_1(t) \, Z
\, + \, \cdots \, + \, g_m(t) \, Z^m     \\[2mm] 
g_{k}(t)&  = & t^{-k(n-k)} \,\left(\sum_{\ell=0}^{k(m+n-2k)} 
\,g_{k,\ell}\,t^\ell \right) 
\end{array}
\right.
\end{equation}
Autrement dit $g_k(t)=\rd k(AA\cir)$. Nous d\'efinissons aussi
$\cG'_0(A)=1$ et $\cG'_\ell(A)=0$ pour $\ell>m.$

\noindent Les \emph{\ids de Gram de la matrice $A$}  sont les id\'eaux 
$\cC_k(A)$ d\'efinis par: 

\ss\centerline{$\cC_k(A) \; 
:=  \;$  l'id\'eal   engendr\'e  par  les    $g_{h,\ell}$  pour tous les $h\geq 
k$.
}
\end{definition}

\begin{proposition} 
\label{propIG} 
On a $\sqrt{\cC_k(A)}=\sqrt{\cD_k(A)}$.
Plus pr\'ecis\'ement, avec un entier $r$ qui ne d\'epend que de $(m,n,k)$  on a:
 $$\cD_k(A)^r\subset \cC_k(A)\subset \cD_k(A)^2\subset \cD_k(A).$$
\end{proposition}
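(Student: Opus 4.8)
The plan is to treat the three inclusions $\cD_k(A)^r\subset\cC_k(A)\subset\cD_k(A)^2\subset\cD_k(A)$ separately, the rightmost being trivial. First I would make the Gram coefficients explicit. Applying the Binet--Cauchy lemma~\ref{propGramCroise} to $\varphi=A$, $\varphi\bul=A\cir$ (and using $\det(\Id+ZAA\cir)=\det(\Id+ZA\cir A)$) together with the identity $\det\big((A\cir)_{\beta,\alpha}\big)=t^{\,|\alpha|-|\beta|}\mu_{\alpha,\beta}$ — obtained by factoring the powers of $t$ out of the rows and columns of $(A\cir)_{\beta,\alpha}$, since $A\cir_{i,j}=t^{\,j-i}a_{j,i}$ by~(\ref{eqDefAcirc}), with $|\alpha|:=\sum_{i\in\alpha}i$ — yields
\[ g_h(t)=\rd h(AA\cir)=\sum_{\alpha\in\cP_{h,m},\,\beta\in\cP_{h,n}} t^{\,|\alpha|-|\beta|}\,\mu_{\alpha,\beta}^{2}. \]
Thus every generator $g_{h,\ell}$ of $\cC_k(A)$ (with $h\ge k$) is a sum of squares of $h$-minors sharing one value of $|\alpha|-|\beta|$. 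Since a $(k{+}1)$-minor expands along a line into $k$-minors, $\cD_h(A)\subset\cD_k(A)$ for $h\ge k$, so each $\mu_{\alpha,\beta}^2$ with $h\ge k$ lies in $\cD_k(A)^2$; hence $\cC_k(A)\subset\cD_k(A)^2\subset\cD_k(A)$.

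The inclusion $\cD_k(A)^r\subset\cC_k(A)$ is the substantive one. I would first reduce it to the radical membership $\cD_k(A)\subset\sqrt{\cC_k(A)}$ with an exponent independent of $\A$: being a universal containment it may be checked on the generic matrix $A=(X_{ij})$ over $\ZZ[X_{ij}]$, where $\cC_k$ and $\cD_k$ are finitely generated in a Noetherian Jacobson ring, so that the radical equality $\sqrt{\cC_k}=\sqrt{\cD_k}$ delivers a fixed $r=r(m,n,k)$; specialising $X_{ij}\mapsto a_{ij}$ then transports the conclusion to every $\A$. By the Nullstellensatz in $\ZZ[X_{ij}]$ this radical equality amounts to the inclusion of zero-sets $V(\cC_k)\subset V(\cD_k)$ over every algebraically closed field $\overline\KK$ (all characteristics), the reverse being clear from $\cC_k\subset\cD_k$.

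Everything therefore rests on the field statement: if $A_0\in\overline\KK^{m\times n}$ annihilates all $g_{h,\ell}$ with $h\ge k$, then $\rg(A_0)<k$. Writing $s=\rg(A_0)$ and noting $g_h(A_0)=0$ automatically for $h>s$, it suffices to show $g_s(t)(A_0)\ne0$ whenever $s\ge1$, which forces $s<k$. The apparent obstacle — and the crux of the whole proposition — is cancellation: distinct minors may share the same exponent $|\alpha|-|\beta|$, so a coefficient $\sum_{|\alpha|-|\beta|=c}\mu_{\alpha,\beta}^2$ could conceivably vanish over $\overline\KK$ while some $\mu_{\alpha,\beta}\ne0$. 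I expect the precise weights $t^{\,i-1}$ to be exactly what forbids this.

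I would resolve it geometrically, over $L=\overline\KK(t)$. From $A\cir=Q_n^{-1}\,\tra{A}\,Q_m$ one reads off that $\Im A_0\cap\Ker A_0\cir$ is the radical of $\Phi_{t,m}$ restricted to $\Im A_0$, and symmetrically (using $\Im A_0\cir=Q_n^{-1}(\text{row space})$ and $\Ker A_0=(\text{row space})^{\perp}$) that $\Im A_0\cir\cap\Ker A_0$ is the radical of the form $Q_n^{-1}$ restricted to the row space of $A_0$; so if these two restricted forms are nondegenerate, a dimension count gives the direct sums of~(\ref{eqSDO}), i.e. $A_0$ and $A_0\cir$ are crossed. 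Then $A_0A_0\cir$ is crossed with itself of rank $s$ (lemma~\ref{lemCroCro}), whence $g_s=\rd s(A_0A_0\cir)$ is invertible, thus nonzero, by the easy implication $2\Rightarrow1$ of theorem~\ref{thCHam}. It remains only to prove that a diagonal form $\sum_i t^{\,e_i}x_i^{2}$ with $e_1<\cdots<e_m$ is nondegenerate on any $\overline\KK$-rational subspace $W$ (both $\Phi_{t,m}$, with $e_i=i-1$, and $Q_n^{-1}$, with $e_i=1-i$, are of this monotone type up to reversal). Taking a basis matrix $M$ of $W$ in reduced column--echelon form with pivot rows $i_1<\cdots<i_s$, Cauchy--Binet gives $\det(\tra{M}\,Q\,M)=\sum_{\beta}t^{\,\sum_{i\in\beta}e_i}p_\beta^{2}$, where the $p_\beta$ are the maximal minors of $M$; echelon form forces $p_\beta\ne0$ only for $\beta\ge\{i_1,\dots,i_s\}$ componentwise, and along this order $\beta\mapsto\sum_{i\in\beta}e_i$ is strictly monotone, so the pivot set is the unique extremiser of the exponent among indices with $p_\beta\ne0$. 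The corresponding extreme coefficient is $p_{\{i_1,\dots,i_s\}}^{2}=1\ne0$ in every characteristic, so the Gram determinant is nonzero. This settles the nondegeneracy, hence the field statement, and with it the proposition.
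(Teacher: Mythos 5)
Your proof is correct, and the comparison with the paper is lopsided: the paper's own proof is four lines long and outsources the real content. It declares the inclusions $\cC_k(A)\subset\cD_k(A)^2\subset\cD_k(A)$ to be clear, invokes the field case $\cC_k(A)=\cD_k(A)$ as \emph{essentiellement une reformulation du r\'esultat de Mulmuley} (citing \cite{DiGL,Mul}), and obtains the exponent $r$ by the \emph{Nullstellensatz formel}, explicitly declining to compute $r$. Your reduction skeleton coincides with the paper's: pass to the generic matrix over $\ZZ[X_{ij}]$, use Noetherianity plus the Nullstellensatz over $\ZZ$ (all characteristics) to convert the field statement into $\cD_k^r\subset\cC_k$ with $r=r(m,n,k)$, and specialize to any $\A$. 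What you do genuinely differently is to \emph{prove} the field-case ingredient rather than cite it: the Binet--Cauchy expansion $g_h(t)=\sum_{\alpha,\beta}t^{\,|\alpha|-|\beta|}\,\mu_{\alpha,\beta}^2$ (which also makes the \emph{claires} inclusion $\cC_k(A)\subset\cD_k(A)^2$ honest), the identification of $\Im A_0\cap\Ker A_0\cir$ and $\Im A_0\cir\cap\Ker A_0$ with the radicals of the two $t$-weighted diagonal forms restricted to $\overline{\KK}$-rational subspaces, and the column-echelon/extremal-exponent argument for nondegeneracy: among the row sets $\beta$ with nonzero maximal minor, the pivot set is the unique extremizer of $\sum_{i\in\beta}e_i$, and its Laurent coefficient is a pivot minor squared, namely $1$, nonzero in every characteristic. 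This pinpoints exactly what the weights $t^{i-1}$ are for --- they forbid cancellation among equal-exponent squares, which is precisely what would break an unweighted sum-of-squares argument in characteristic $2$ or over $\CC$ --- and it feeds correctly into the paper's own machinery (lemme \ref{lemCroCro} and the easy direction of th\'eor\`eme \ref{thCHam}) to give $g_s(t)(A_0)\neq 0$ for $s=\rg(A_0)\geq 1$, hence $\rg(A_0)<k$. The paper's route buys brevity by citation; yours buys a self-contained, characteristic-free proof with the same uniformity in $(m,n,k)$, at the cost of length. Like the paper, you leave $r$ inexplicit, which the statement permits.
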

\begin{proof}{Preuve}
Les inclusions $\cC_k(A)\subset \cD_k(A)^2\subset \cD_k(A)$ sont claires. Dans le 
cas des corps on sait que $\, \cC_k(A)= \cD_k(A)$ (cette \egt est essentiellement 
une reformulation du r\'esultat de Mulmuley, cf. \cite{DiGL,Mul}).
On peut donc conclure par le Nullstellensatz formel qu'il existe
un entier  $r>0$ tel que
$\cD_k(A)^r\subset \cC_k(A)$.  Avoir un tel $r$ explicitement demande un peu plus 
de travail.
\junk{, par exemple faire une relecture ad\'equate de la preuve du lemme 
\ref{lemMul} qui est la vraie raison du lemme
\ref{lemGkbis}.} 
Nous nous en dispenserons car nous n'en aurons pas besoin pour nos calculs \gui{en 
temps polynomial}. 
\end{proof}

Notez que les \idds de $A\cir$ sont \'egaux \`a ceux de $A$.

\begin{corollary} 
\label{corpropIG} 
Si $A$ est \lnlz, les \ids de Gram de $A$ sont \'egaux \`a ses \idds et sont 
engendr\'es par des \idmsz. 
\end{corollary}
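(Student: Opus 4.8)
The plan is to read the result straight off Proposition~\ref{propIG}, combined with the fact that a locally simple matrix has idempotent determinantal ideals.

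First I would use the characterization of locally simple maps. Since $A$ is locally simple, Theorem~\ref{theoremIFD} (points~\ref{g} and~\ref{h}) shows that each determinantal ideal $\cD_k(A)$ is idempotent and, more precisely, is generated by an idempotent $e_k$. In particular $\cD_k(A)^2=\cD_k(A)$, and hence $\cD_k(A)^r=\cD_k(A)$ for every integer $r\geq 1$, by an immediate induction.

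Next I would invoke the two-sided inclusion of Proposition~\ref{propIG}, valid for a suitable integer $r$ depending only on $(m,n,k)$:
$$\cD_k(A)^r\subset\cC_k(A)\subset\cD_k(A)^2\subset\cD_k(A).$$
By the previous step both extreme ideals, namely $\cD_k(A)^r$ on the left and $\cD_k(A)^2$ on the right, coincide with $\cD_k(A)$; squeezing then forces $\cC_k(A)=\cD_k(A)$, which proves that the Gram ideals equal the determinantal ideals. Finally, since $\cD_k(A)$ is generated by the idempotent $e_k$ (again point~\ref{h} of Theorem~\ref{theoremIFD}), this equality shows that every Gram ideal $\cC_k(A)$ is itself generated by an idempotent, which is the second assertion.

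I do not expect any genuine obstacle here: all the real work is already hidden inside Proposition~\ref{propIG}, and crucially we only need the \emph{existence} of the exponent $r$ there, not its explicit value (which the proof of~\ref{propIG} deliberately leaves uncomputed). The only point to keep in mind is that the idempotence of $\cD_k(A)$ is exactly what lets the chain of inclusions collapse at both ends simultaneously; without local simplicity one would be left only with the radical equality $\sqrt{\cC_k(A)}=\sqrt{\cD_k(A)}$ and the weaker containment $\cC_k(A)\subset\cD_k(A)$.
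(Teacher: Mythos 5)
Your proof is correct and follows exactly the route the paper intends: the corollary is stated as an immediate consequence of Proposition~\ref{propIG}, and your argument (idempotence of the $\cD_k(A)$ from Theorem~\ref{theoremIFD} collapses the chain $\cD_k(A)^r\subset\cC_k(A)\subset\cD_k(A)^2\subset\cD_k(A)$ into equalities) is precisely the intended one. Nothing is missing.
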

\begin{corollary} 
\label{corpropIGbis} 
R\'eciproquement:
\begin{enumerate}
\item Si $\cD_{k+1}(A)=0$ et $\cC_{k}(A)=\gen{1}$, 
 $A$ est \lnl de rang $k$.
\item Si $\A$ est r\'eduit (i.e. $0$ est le seul \elt nilpotent), si 
$\cC_{k+1}(A)=0$ et $\cC_{k}(A)=\gen{1}$, 
 $A$ est \lnl de rang $k$.
\item Si $\A$ est r\'eduit et si les \ids de Gram de $A$ sont engendr\'es par des 
\idmsz, alors $A$ est \lnlz.  
\item Si $\A$ est r\'eduit et a pour seuls \idms $0$ et $1$ (en particulier si 
$\A$  est int\`egre)
la matrice $A$ est \lnl \ssi il existe un entier $k$ v\'erifiant: 
$\cC_{k+1}(A)=0$ et $\cC_{k}(A)=\gen{1}$.
\end{enumerate}
\end{corollary}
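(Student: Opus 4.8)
The plan is to reduce every hypothesis on the Gram ideals $\cC_k(A)$ to the corresponding statement about the determinantal ideals $\cD_k(A)$, using Proposition~\ref{propIG}, which gives the inclusions $\cD_k(A)^r \subseteq \cC_k(A) \subseteq \cD_k(A)$, together with two facts recorded earlier: that $A$ is \lnl de rang $k$ exactly when $\cD_k(A)=\gen{1}$ and $\cD_{k+1}(A)=0$, and that $A$ is \lnl exactly when every $\cD_k(A)$ is idempotent (Theorem~\ref{theoremIFD}). For item~1 the assumption $\cC_k(A)=\gen{1}$ together with $\cC_k(A)\subseteq\cD_k(A)$ immediately forces $\cD_k(A)=\gen{1}$, and since $\cD_{k+1}(A)=0$ is assumed, the rank-$k$ criterion gives that $A$ is \lnl de rang $k$.

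For item~2 I would first upgrade the hypothesis on $\cC_{k+1}$ to one on $\cD_{k+1}$. From $\cD_{k+1}(A)^r\subseteq\cC_{k+1}(A)=0$, each minor $\mu$ of order $k+1$ satisfies $\mu^r=0$, hence is nilpotent, hence vanishes in the reduced ring $\A$; thus $\cD_{k+1}(A)=0$. The argument of item~1 (using $\cC_k(A)=\gen{1}\subseteq\cD_k(A)$) then applies verbatim and $A$ is \lnl de rang $k$.

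For item~3 the goal is to show each $\cD_k(A)$ is idempotent and to invoke Theorem~\ref{theoremIFD}. Write $\cC_k(A)=\gen{e_k}$ with $e_k$ idempotent. The key small step, which I expect to be the only non-formal point, is that in a reduced ring the radical of an idempotent-generated ideal is itself: if $x^n=e_k y$, then $\bigl(x(1-e_k)\bigr)^n=x^n(1-e_k)=e_ky(1-e_k)=0$, so reducedness forces $x(1-e_k)=0$, i.e.\ $x\in\gen{e_k}$; hence $\sqrt{\gen{e_k}}=\gen{e_k}$. Combining this with Proposition~\ref{propIG} yields $\cC_k(A)\subseteq\cD_k(A)\subseteq\sqrt{\cD_k(A)}=\sqrt{\cC_k(A)}=\gen{e_k}=\cC_k(A)$, so that $\cD_k(A)=\gen{e_k}$ is idempotent for every $k$, and $A$ is \lnl.

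Item~4 assembles the rest. The implication $\Leftarrow$ is precisely item~2. For $\Rightarrow$, if $A$ is \lnl then by Corollary~\ref{corpropIG} the Gram ideals coincide with the determinantal ideals and are generated by idempotents; in a reduced ring whose only idempotents are $0$ and $1$, each $\cC_k(A)$ is therefore $0$ or $\gen{1}$. The chain $\gen{1}=\cC_0(A)\supseteq\cC_1(A)\supseteq\cdots$ is decreasing (fewer generators as $k$ grows) and eventually $0$, so the indices with $\cC_k(A)=\gen{1}$ form an initial segment and there is a unique $k$ with $\cC_k(A)=\gen{1}$ and $\cC_{k+1}(A)=0$. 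The whole corollary is thus formal manipulation of Proposition~\ref{propIG}, the single subtlety being the radical computation of item~3.
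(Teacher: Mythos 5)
Your proposal is correct, and it is exactly the argument the paper intends: the corollary is stated there without proof, as an immediate consequence of Proposition~\ref{propIG}, of Corollaire~\ref{corpropIG}, and of the characterization of matrices \lnls (de rang $k$) by $\cD_k(A)=\gen{1}$, $\cD_{k+1}(A)=0$ (resp.\ by l'idempotence des \idds, Th\'eor\`eme~\ref{theoremIFD}). Your filling-in of the details is sound, including the one genuinely non-formal point, namely that in a reduced ring $\sqrt{\gen{e_k}}=\gen{e_k}$ for $e_k$ idempotent, which is what converts the equality of radicals in Proposition~\ref{propIG} into the equality $\cD_k(A)=\cC_k(A)$ needed for items~2--4.
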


Notez que la condition \gui{$\cC_{k+1}(A)=0$ et $\cC_{k}(A)=\gen{1}$}
revient \`a dire que $\rP{AA\cir}(Z)$ est de degr\'e $\leq k$ en $Z$ et que son 
\coe  $g_k(t)$ est inversible dans $\A(t)$.

\subsection{Calcul pratique d'un \ingz}
\label{subsecCalPra}

Rappelons que notre but est de donner un test rapide pour savoir si une matrice 
est \lnlz, et, en cas de r\'eponse positive, de calculer un \ing de la matrice.

\subsubsection*{Le cas du rang constant} 

C'est par exemple s\^{u}rement le cas si l'anneau n'a pas d'autre \idm que $0$ et 
$1$.

\begin{theorem} 
\label{thIGGRC2} 
Soit  une matrice $A\in\A^{m{\times}n}$.
On rappelle que $A\cir=Q_n^{-1}(t)\, A\, Q_m(t)$ et que
 $\rP{AA\cir}(Z)=\det(\I_n+ZAA\cir)=1+\sum_{1\leq \ell\leq n} g_\ell(t)\,Z^\ell$.
\Propeq
\begin{enumerate}
\item \label{thIGGRC2a} $A$ est \lnl de rang $k$ sur $\A$.
\item \label{thIGGRC2b} $A$ est \lnl de rang $k$ sur $\A(t)$. 
\item \label{thIGGRC2c} $A$ est \nl de rang $k$ sur $\A(t)$. 
\item \label{thIGGRC2e} $A$ est de rang $\leq k$ et 
le \pol $t^{k(n-k)}g_k(t)$ est primitif.  
\item \label{thIGGRC2f} $A$ et $A\cir$ sont \cros sur $\A(t),$ 
de rang $k$. 
\item \label{thIGGRC2g} $A$ et $A\cir$ sont \cros sur $\A(t)$,
 $\deg_Z(\rP{AA\cir})\leq k$  et
$t^{k(n-k)}g_k(t)$ est primitif. 
\item \label{thIGGRC2i} $\deg_Z(\rP{AA\cir})\leq k$, 
le \pol $t^{k(n-k)}g_k(t)$ est primitif et 
on a $A\, \Adj^{(k)}_{A\cir}(A) \, A=g_k(t)\,A$.
\end{enumerate}
Si $\A$ est un anneau r\'eduit, la condition \ref{thIGGRC2i} se simplifie en 
\gui{$\deg_Z(\rP{AA\cir})\leq k$ et 
le \pol $t^{k(n-k)}g_k(t)$ est primitif}.
Lorsque les conditions sont v\'erifi\'ees la matrice  
$\Adj^{(k)}_{A\cir}(A)/g_k(t)$ est l'\ing de
$A$ via $A\cir$ sur l'anneau $\A(t)$.
\end{theorem}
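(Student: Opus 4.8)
L'id\'ee g\'en\'erale est de lire ce r\'esultat comme le cas particulier de la th\'eorie des \alis \cros de la section~\ref{subsecIVGN} obtenu en prenant $\varphi\bul=\varphi\cir$, \cad $A\bul=A\cir$, sur l'anneau $\A(t)$. Trois ingr\'edients d\'ej\`a \'etablis interviennent: la comparaison entre \ids de Gram et \idds (proposition~\ref{propIG} et corollaires \ref{corpropIG}, \ref{corpropIGbis}); les crit\`eres des \thos \ref{thCroiRgCst}, \ref{thGENCRAM1}, \ref{thGENCRAM2} et \ref{thGENCRAM3} appliqu\'es \`a la paire $(A,A\cir)$, avec $a_k=\rd k(AA\cir)=g_k(t)$ et en notant que $\cD_j(A\cir)=\cD_j(A)$; enfin les propri\'et\'es de l'extension de Nagata $\A(t)$ rappel\'ees \`a la section~\ref{subsecMrcLibre}, \`a savoir qu'elle est fid\`element plate et fortement U-irr\'eductible. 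Deux traductions serviront constamment. D'une part, un \pol de $\A[t]$ est primitif \ssi il est inversible dans $\A(t)$; comme $t$ lui-m\^eme est inversible dans $\A(t)$, la condition \gui{$t^{k(n-k)}g_k(t)$ primitif} \'equivaut \`a \gui{$g_k(t)$ inversible dans $\A(t)$}. D'autre part, la fid\`ele platitude assure que pour tout id\'eal $I$ de $\A$ on a $I=\gen 1\Leftrightarrow I\A(t)=\gen 1$ et $I=\gen 0\Leftrightarrow I\A(t)=\gen 0$; appliqu\'e aux \idds $\cD_j(A)$, ceci montre que la condition de rang \gui{$\cD_k=\gen 1$ et $\cD_{k+1}=\gen 0$}, qui caract\'erise \gui{\lnl de rang $k$}, est la m\^eme sur $\A$ et sur $\A(t)$.

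Je traiterais d'abord le noyau \ref{thIGGRC2a}$\,\Leftrightarrow\,$\ref{thIGGRC2b}$\,\Leftrightarrow\,$\ref{thIGGRC2c}$\,\Leftrightarrow\,$\ref{thIGGRC2e}. L'\'equivalence \ref{thIGGRC2a}$\,\Leftrightarrow\,$\ref{thIGGRC2b} r\'esulte directement de la fid\`ele platitude. L'\'equivalence \ref{thIGGRC2b}$\,\Leftrightarrow\,$\ref{thIGGRC2c} est imm\'ediate: \ref{thIGGRC2c}$\,\Rightarrow\,$\ref{thIGGRC2b} est trivial, et la r\'eciproque exprime qu'une matrice \lnl de rang constant devient \nl sur un anneau fortement U-irr\'eductible (section~\ref{subsecMrcLibre}), ce qui est le cas de $\A(t)$. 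Pour \ref{thIGGRC2a}$\,\Leftrightarrow\,$\ref{thIGGRC2e}: si $A$ est \lnl de rang $k$, alors $\cD_{k+1}(A)=\gen 0$ (donc $A$ est de rang $\leq k$) et le corollaire~\ref{corpropIG} donne $\cC_k(A)=\cD_k(A)=\gen 1$; comme $\cD_{k+1}=\gen 0$ entra\^ine $\cC_{k+1}=\gen 0$, on a $\cC_k(A)=\gen{g_{k,\ell}}=\gen 1$, \cad $t^{k(n-k)}g_k$ primitif. R\'eciproquement, sous \ref{thIGGRC2e}, la proposition~\ref{propIG} donne $\cC_{k+1}(A)\subseteq\cD_{k+1}(A)^2=\gen 0$, donc $\cC_k(A)=\gen{g_{k,\ell}}=\gen 1$, et le corollaire~\ref{corpropIGbis} (point~1) conclut que $A$ est \lnl de rang $k$.

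Pour les conditions \cros \ref{thIGGRC2f}, \ref{thIGGRC2g} et \ref{thIGGRC2i}, je travaillerais sur $\A(t)$. L'\'equivalence \ref{thIGGRC2e}$\,\Leftrightarrow\,$\ref{thIGGRC2f} est alors exactement le \tho \ref{thCroiRgCst}: \gui{$A$ et $A\cir$ \cros de rang $k$} \'equivaut \`a \gui{$A$ et $A\cir$ de rang $\leq k$ et $a_k=g_k$ inversible}, c'est-\`a-dire \`a \ref{thIGGRC2e} vu les deux traductions ci-dessus. La difficult\'e, pour \ref{thIGGRC2g} et \ref{thIGGRC2i}, est que l'hypoth\`ese $\deg_Z(\rP{AA\cir})\leq k$ ne dit que $\cC_{k+1}(A)=\gen 0$, \emph{a priori} plus faible que $\cD_{k+1}(A)=\gen 0$. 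Le point cl\'e est le lemme suivant, que j'\'etablirais sur $\A(t)$: si $A$ est \lnlz, si $\cC_{k+1}(A)=\gen 0$ et si $g_k$ est inversible, alors $A$ est \lnl de rang exactement $k$. En effet, $A$ \'etant \lnlz, $\cD_{k+1}(A)$ est engendr\'e par un \idm $e_{k+1}$; or $\cC_{k+1}=\gen 0$ entra\^ine, via la proposition~\ref{propIG}, que $\cD_{k+1}^{\,r}=\gen 0$, d'o\`u $e_{k+1}=e_{k+1}^{\,r}=0$ et $\cD_{k+1}(A)=\gen 0$; de plus $g_k\in\cC_k(A)\subseteq\cD_k(A)$ inversible force $\cD_k(A)=\gen 1$. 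On applique ce lemme ainsi. Sous \ref{thIGGRC2i}, l'\'egalit\'e $A\,\Adj^{(k)}_{A\cir}(A)\,A=g_k\,A$ montre, $g_k$ \'etant inversible, que $A\,\psi\,A=A$ avec $\psi=g_k^{-1}\Adj^{(k)}_{A\cir}(A)$, donc que $\Im\,A$ est facteur direct et que $A$ est \lnl sur $\A(t)$ (\tho \ref{theoremIFD}); le lemme donne le rang $k$, puis le \tho \ref{thCroiRgCst} redonne \ref{thIGGRC2f}. Sous \ref{thIGGRC2g}, la seule hypoth\`ese \gui{\cros} donne d\'ej\`a $A$ \lnlz, et le m\^eme lemme fournit le rang $k$. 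R\'eciproquement \ref{thIGGRC2f} entra\^ine \ref{thIGGRC2g} et \ref{thIGGRC2i}: le rang $k$ donne $\deg_Z\leq k$ et la primitivit\'e, tandis que le \tho \ref{thGENCRAM1}, valable d\`es que $A$ est de rang $\leq k$, fournit $A\,\Adj^{(k)}_{A\cir}(A)\,A=g_k\,A$.

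Restent les deux remarques finales. Si $\A$ est r\'eduit, alors $\A(t)$ l'est aussi, et $\cC_{k+1}(A)=\gen 0$ entra\^ine cette fois $\cD_{k+1}(A)=\gen 0$ (un id\'eal d'un anneau r\'eduit dont une puissance est nulle est nul): $A$ est donc de rang $\leq k$, et le \tho \ref{thGENCRAM1} rend l'\'egalit\'e $A\,\Adj^{(k)}_{A\cir}(A)\,A=g_k\,A$ automatique, si bien que \ref{thIGGRC2i} se r\'eduit \`a \gui{$\deg_Z(\rP{AA\cir})\leq k$ et $t^{k(n-k)}g_k$ primitif}. Enfin, lorsque les conditions sont v\'erifi\'ees, $A$ et $A\cir$ sont \cros de rang $k$ sur $\A(t)$, et le \tho \ref{thGENCRAM2} donne $\Ig(A,A\cir)=g_k^{-1}\Adj_{A\cir,k}(A)$, quantit\'e \'egale \`a $g_k^{-1}\Adj^{(k)}_{A\cir}(A)$ d'apr\`es le \tho \ref{thGENCRAM3}: c'est l'\ing annonc\'e. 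Le principal obstacle me semble \^etre le lemme de rang exact ci-dessus, \cad le contr\^ole de l'\'ecart entre $\cC_{k+1}$ et $\cD_{k+1}$ hors du cas r\'eduit, qui repose enti\`erement sur la proposition~\ref{propIG}, et donc en derni\`ere analyse sur le r\'esultat de Mulmuley.
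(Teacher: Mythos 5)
Votre preuve est correcte et suit pour l'essentiel le m\^eme chemin que celle du papier~: m\^eme r\'eduction \`a la th\'eorie des \alis \cros sur $\A(t)$ avec $\varphi\bul=A\cir$, m\^emes ingr\'edients (\thos \ref{thCroiRgCst} et \ref{thGENCRAM1}, \tho \ref{theoremIFD}, proposition \ref{propIG}, corollaires \ref{corpropIG} et \ref{corpropIGbis}, propri\'et\'es de $\A(t)$ de la section \ref{subsecMrcLibre}) et m\^eme cha\^ine d'implications. Votre \gui{lemme de rang exact} n'est qu'un d\'epliage de l'appel direct du papier au corollaire \ref{corpropIG} (\'egalit\'e des \ids de Gram et des \idds pour une matrice \lnlz), et vos pr\'ecisions suppl\'ementaires (fid\`ele platitude, traduction primitif/inversible) explicitent simplement des \'etapes que le papier d\'eclare \gui{faciles}.
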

\begin{proof}{Preuve}
Le fait que \ref{thIGGRC2b} implique \ref{thIGGRC2c} a \'et\'e expliqu\'e dans la 
section \ref{subsecMrcLibre}.\\
De la caract\'erisation de \ref{thIGGRC2a} par le fait que $\cD_{k+1}(A)=0$  et  
$\cD_k(A)=\gen{1},$ on d\'eduit facilement l'\'equivalence de \ref{thIGGRC2a} et 
\ref{thIGGRC2b}.\\
Le corollaire \ref{corpropIG} montre que \ref{thIGGRC2a} implique que
le \pol $t^{k(n-k)}g_k(t)$ est primitif. En particulier \ref{thIGGRC2a} implique 
\ref{thIGGRC2e}. Il montre aussi l'\'equivalence de \ref{thIGGRC2g} et 
\ref{thIGGRC2f}.\\
Le \tho \ref{thCroiRgCst} montre que \ref{thIGGRC2e} implique \ref{thIGGRC2f}, 
lequel implique clairement  \ref{thIGGRC2b}.\\
On a donc l'\'equivalence des points \ref{thIGGRC2a} \`a \ref{thIGGRC2g}.\\
L'\'equivalence de \ref{thIGGRC2b} et \ref{thIGGRC2i} r\'esulte du \tho 
\ref{thGENCRAM1}. En effet ce \tho nous dit que 
$ A\, \Adj^{(k)}_{A\cir}(A) \, A\;\allowbreak \equiv\;\allowbreak g_k(t)\,A\; 
\mathrm{mod}\; \cD_{k+1}(A).$
Donc si $A$ est de rang $\leq k$ on a l'\egtz. Par ailleurs si on a l'\egtz,
$A$ est \lnl sur $\A(t)$ (condition \ref{e} dans le \tho \ref{theoremIFD}), et le 
rang est fourni par le 
corol\-laire~\ref{corpropIG}.\\
Le cas r\'eduit a d\'eja \'et\'e vu (proposition \ref{propIG} et corollaire 
\ref{corpropIGbis}). 
\end{proof}

D\'ecrivons maintenant un \algo \gui{rapide}
 pour savoir si une matrice est \lnl de rang constant, et, en cas de r\'eponse 
positive, pour calculer un \ing de la matrice.
Cet \algo fonctionne en utilisant 
la caract\'erisation \ref{thIGGRC2i} dans le \tho pr\'ec\'edent.

Comme nous nous int\'eressons pour le moment uniquement au rang constant, les 
seuls tests dont nous aurons besoin sont les suivants:
le test d'\'egalit\'e \`a $0$ dans $\A$ et le test \gui{$1\in\gen{x_1,\ldots 
,x_n}~?$}. 

On proc\`ede comme suit.
\begin{enumerate}
\item On calcule $A\cir$ d\'efini par l'\egt (\ref{eqDefAcirc}).

\item On calcule les \pols de Gram $g_k(t)$ d\'efinis par l'\egt
(\ref{eqDefGrambis}). 
Ceci se fait en calculant le \polcar de
$A\,A\cir$ si  $m\leq n$ ou celui de $A\cir\,A$ si $m>n$. 
\item On cherche la plus grande valeur de $k$ pour laquelle l'\id $\cC_{k}(A)$ est 
non nul. Pour cela on teste les \pols de Gram \gnes $g_{\ell}(t)$ pour des valeurs 
d\'ecroissantes de $\ell$ et on s'arr\^{e}te au premier non nul.  Pour le plus 
grand $k$ tel que $\cC_{k}(A)\neq 0$ 
on teste si $\cC_{k}(A)=\gen{1}$.  \\
Si la r\'eponse est n\'egative $A$ n'est pas \lnl de rang constant.
Si la r\'eponse est positive et si l'anneau est r\'eduit alors $\cD_{k+1}(A)$ est 
nul et la matrice est \lnl de rang $k$.
\item Dans tous les cas, si la r\'eponse est positive, on calcule la matrice 
$B=\Adj^{(k)}_{A\cir}(A)$ \`a \coes dans $\A[t,1/t]$ 
donn\'ee par l'\egt (\ref{eqDefAdjficirc2}) page \pageref{eqDefAdjficirc2}.
Ensuite on teste si:
\begin{equation} \label{eqTestCrucial}
A\, B(t)\, A=g_k(t)\,A
\end{equation}
(ce test est inutile si l'anneau est r\'eduit).
Nous savons d\'ej\`a (\tho \ref{thGENCRAM1}) que
$$ A\, B(t)\, A\;\equiv\;g_k(t)\,A\quad \mathrm{mod}\quad \cD_{k+1}(A)
$$
En cas de r\'eponse n\'egative, $\cD_{k+1}(A)\neq 0$ et
  $A$ n'est pas \lnlz. En cas de r\'eponse positive  $A$ est  \lnlz,
au moins sur l'anneau $\A(t)$ car $t^{k(n-k)}\,g_k(t)$ est un \pol primitif.

\item Il nous reste \`a calculer un inverse \gne de $A$ \`a \coes dans $\A$.
L'\egt (\ref{eqTestCrucial}) peut \^{e}tre lue en chaque degr\'e  $t^\ell$ (avec 
$-k(n-k)\leq \ell\leq k(m-k)$) comme une \egt dans $\A^{m{\times}n}$:
$A\, B_\ell\, A=g_{k,\ell}\,A$.
Comme on conna\^{\i}t une \coli $\sum_\ell \alpha_{\ell}\,g_{k,\ell}$
des \coes de $g_k(t)$ qui est \'egale \`a $1$, la \coli correspondante des \egts 
en chaque degr\'e $\ell$ nous donne la matrice $B'=\sum_\ell 
\alpha_{\ell}\,B_{\ell}$ qui v\'erifie  $A\, B'\, A = A$.
\end{enumerate}

Ici il semble peu probable que l'on ait aussi   $B'\, A\, B' = B'$, sauf si les 
$\alpha_\ell$ sont obtenus en sp\'ecialisant $t$ (ce qui peut se faire si 
$g_k(\tau)$ est inversible pour une valeur particuli\`ere de $\tau$). De toute 
fa\c{c}on, on peut toujours remplacer $B'$ par $B'\, A\, B'$ pour avoir un vrai 
\ingz.

\medskip Voyons maintenant la complexit\'e de cet \algoz.

Nous n'utilisons ni la multiplication rapide des matrices, ni celle
des \polsz, qui, naturellement, am\'elioreraient de fa\c{c}on substantielle les
bornes calcul\'ees.

Voici notre calcul des bornes, \'etape par \'etape.
\begin{enumerate}
\item Co\^{u}t n\'egligeable.
\item 
Posons $p=\inf(m,n)$. 
 Le calcul du \polcar consomme $\cO(p^4)$ \oparis dans $\A[t]$ 
portant sur des \pols de degr\'e $\leq p\,(n+m)$ et donc
 $\cO(p^6\,(n+m)^2)$  \oparis dans~$\A$.
\item Cette \'etape consomme $\cO((k\,(n+m-2k))^s)$ \ops \elrsz. 
\item Le nombre d'\oparis est en  $\cO(p^5\,(n+m)^2)$, et le test consomme 
 $\cO((p^3\,(n+m))$ op\'erations \elrsz.
\item Nombre d'\oparis n\'egligeable par rapport aux \'etapes 2 ou 4.
\end{enumerate}

R\'esumons.

\begin{theorem} 
\label{thIGGRC} 
Soit $\A$ un anneau avec test d'\'egalit\'e \`a $0$ et test 
\gui{$1\in\gen{x_1,\ldots ,x_n}~?$}. 
On peut tester si une matrice $A\in\A^{m{\times}n}$ est \lnl
de rang constant, et en cas de r\'eponse positive, calculer un inverse \gne de la 
matrice. Soit $p=\min(m,n)$, $q=\max(m,n)$.
Si le premier test consomme une \op \elr
et si le deuxi\`eme consomme un nombre
d'\ops \elrs  en $\cO(n^s)$, ces calculs consomment $\cO(p^6\,q^{2})$ \oparis et 
$\cO(p^3\,q+q^{2s})$ autres \ops \elrsz.
Avec les m\^{e}mes bornes de complexit\'e, on calcule un \ing de $A$ et des 
matrices de
\prn sur le noyau et sur l'image de $A$.
\end{theorem}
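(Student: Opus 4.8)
The plan is to assemble the statement directly from the five-step procedure and the step-by-step complexity count that precede it, verifying that both deliverables (deciding whether $A$ is \lnl of constant rank, and producing a generalized inverse \emph{with entries in $\A$}) are obtained, and that the announced bounds are exactly the sums of the per-step bounds. For correctness I would lean entirely on Théorème \ref{thIGGRC2}: the equivalence between ``$A$ is \lnl of rank $k$ over $\A$'' and the purely computational condition \ref{thIGGRC2i}, namely $\deg_Z(\rP{AA\cir})\leq k$, the Laurent polynomial $t^{k(n-k)}g_k(t)$ primitive, and $A\,\Adj^{(k)}_{A\cir}(A)\,A=g_k(t)\,A$. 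The first two conditions are decided in Steps 2--3 (compute $\rP{AA\cir}$, find the top non-zero $g_\ell$, test $\cC_k(A)=\gen{1}$), and the last equality is precisely the test (\ref{eqTestCrucial}) of Step 4; since by Théorème \ref{thGENCRAM1} the two sides of (\ref{eqTestCrucial}) already agree modulo $\cD_{k+1}(A)$, that test exactly detects $\cD_{k+1}(A)=0$, so the algorithm answers the decision problem correctly.

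Next I would handle the passage from $\A(t)$ back to $\A$, which is the only genuinely delicate point. The matrix $\Adj^{(k)}_{A\cir}(A)/g_k(t)$ is a generalized inverse only over $\A(t)$; to descend to $\A$ I would read (\ref{eqTestCrucial}) coefficient by coefficient in $t$, obtaining $A\,B_\ell\,A=g_{k,\ell}\,A$ for each degree $\ell$. Because $t^{k(n-k)}g_k(t)$ is primitive, the Gram coefficients $g_{k,\ell}$ are comaximal, so a relation $\sum_\ell\alpha_\ell\,g_{k,\ell}=1$ is at hand; setting $B'=\sum_\ell\alpha_\ell\,B_\ell$ gives $A\,B'\,A=A$ with $B'\in\A^{n\times m}$, and replacing $B'$ by $B'AB'$ yields a true \ing over $\A$, exactly as noted just after Définition \ref{defIng}.

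For the complexity I would simply sum the per-step costs, using $n+m=p+q=\cO(q)$ since $p\le q$. The arithmetic is dominated by Step 2: the characteristic polynomial of $AA\cir$ (or $A\cir A$ when $m>n$) costs $\cO(p^4)$ operations over $\A[t]$ on polynomials of degree $\cO(pq)$, hence $\cO(p^6(n+m)^2)=\cO(p^6q^2)$ operations over $\A$, and Step 4 contributes only $\cO(p^5q^2)$, which is absorbed. For the remaining elementary operations, the comaximality test of Step 3 costs $\cO((k(n+m-2k))^s)=\cO(q^{2s})$ (since $k(n+m-2k)=\cO(pq)=\cO(q^2)$), while verifying (\ref{eqTestCrucial}) in Step 4 costs $\cO(p^3(n+m))=\cO(p^3q)$ zero-tests; together $\cO(p^3q+q^{2s})$, matching the claim.

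Finally I would read off the projection matrices at no extra asymptotic cost from the generalized inverse $B'$ already computed over $\A$: since $A\,B'\,A=A$ and $B'AB'=B'$, the matrices $AB'$ and $B'A$ are idempotent, $AB'$ projects onto $\Im\,A$ and $\I_n-B'A$ projects onto $\Ker\,A$, exactly as recorded in the remarks following Définition \ref{defIng}. The main thing to watch, rather than any deep obstacle, is this descent from $\A(t)$ to $\A$ through the comaximality of the $g_{k,\ell}$, together with the honest bookkeeping that the degree bound $\cO(pq)$ on the intermediate polynomials is precisely what turns the $\cO(p^4)$ polynomial arithmetic of Step 2 into the stated $\cO(p^6q^2)$ figure over $\A$.
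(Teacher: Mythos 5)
Votre proposition est correcte et suit essentiellement la d\'emonstration du papier~: m\^eme crit\`ere de correction via le point \ref{thIGGRC2i} du \tho \ref{thIGGRC2} et le \tho \ref{thGENCRAM1}, m\^eme descente de $\A(t)$ \`a $\A$ par lecture degr\'e par degr\'e de (\ref{eqTestCrucial}) et combinaison comaximale des $g_{k,\ell}$, et m\^eme d\'ecompte de complexit\'e \'etape par \'etape (le calcul du \polcar de $AA\cir$ dominant avec $\cO(p^6q^2)$ \oparisz, les tests donnant $\cO(p^3q+q^{2s})$). Rien \`a redire.
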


D\`es que $p\geq 2$ ces bornes sont, pour $q$ assez grand, bien meilleures que 
celles obtenues si on ex\'ecute (na\"{\i}vement) un
\algo qui calcule  tous les mineurs de la matrice.

\subsubsection*{Le cas g\'en\'eral} 

Le cas g\'en\'eral se ram\`ene au cas pr\'ec\'edent: cf. lemme
\ref{lemIdIdm} et section \ref{subsecGenRangNonConst}.

On obtient donc.

\begin{theorem} 
\label{thIGGRC2bis} 
Soit  une matrice $A\in\A^{m{\times}n}$.
Les propri\'et\'es suivantes sont \'equivalentes:
\begin{enumerate}
\item $A$ est \lnl sur $\A$. 
\item $A$ est \lnl sur $\A(t)$. 
\item $A$ est \qnl sur $\A(t)$. 
\item $A$ et $A\cir$ sont \cros sur $\A(t)$.  
\end{enumerate}
\end{theorem}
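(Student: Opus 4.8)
The plan is to reduce the general (non-constant-rank) statement to the constant-rank Theorem~\ref{thIGGRC2} by splitting the ring into the components of a sfio, exactly as announced in section~\ref{subsecGenRangNonConst}. I organize the four properties around the implications $(1)\Leftrightarrow(2)$, $(1)\Rightarrow(3)\Rightarrow(2)$ and $(1)\Rightarrow(4)\Rightarrow(2)$. For $(1)\Leftrightarrow(2)$ I use that, by the characterization in Theorem~\ref{theoremIFD} (point~\ref{g}), $A$ is \lnl over a ring precisely when each \idd $\cD_k(A)$ is \idmz. Since $\A(t)$ is the faithfully flat \gui{Nagata} extension of $\A$ (section~\ref{subsecMrcLibre}) and determinantal ideals commute with base change, $\cD_k(A)$ is \idm over $\A$ if and only if its extension $\cD_k(A)\,\A(t)$ is \idm over $\A(t)$, which is $(1)\Leftrightarrow(2)$. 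Faithful flatness (equivalently, the fact that $\A(t)$ has the same \idms as $\A$) moreover lets me take the \idms $e_k$ with $\cD_k(A)=\gen{e_k}$ already in $\A$, so the sfio $r_k=e_k-e_{k+1}$ (produced through Lemma~\ref{lemIdIdm}; \cf Theorem~\ref{theoremIFD} point~\ref{h}) lives in~$\A$.

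Next, for $(1)\Rightarrow(3)$ and $(1)\Rightarrow(4)$, I decompose $\A\simeq\prod_k\A[1/r_k]$; since the \gui{Nagata} construction commutes with finite products, this gives $\A(t)\simeq\prod_k\A(t)[1/r_k]$ with $\A(t)[1/r_k]=(\A[1/r_k])(t)$. Over the factor $\A[1/r_k]$ one has $\cD_k(A)=\gen{1}$ and $\cD_{k+1}(A)=\gen{0}$, so $A$ is \lnl of constant rank $k$ there, and I may apply Theorem~\ref{thIGGRC2} over the \gui{Nagata} ring $(\A[1/r_k])(t)$, which is fortement U-irr\'eductible (section~\ref{subsecMrcLibre}). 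The equivalence \ref{thIGGRC2b}$\,\Leftrightarrow\,$\ref{thIGGRC2c} makes $A$ \nl of rank $k$ over each $\A(t)[1/r_k]$, which is exactly what \qnl means (the remark following Definition~\ref{defAliQnl}); this gives~(3). The equivalence \ref{thIGGRC2b}$\,\Leftrightarrow\,$\ref{thIGGRC2f} makes $A$ and $A\cir$ \cros of rank $k$ over each factor, and I glue these crossings equationally: with $\psi_k=\Ig(\varphi_k,\varphi_k\cir)$ and $\psi_k\bul=\Ig(\varphi_k\cir,\varphi_k)$ on the $k$-th factor, the maps $\psi=\sum_k r_k\psi_k$ and $\psi\bul=\sum_k r_k\psi_k\bul$ satisfy the identities~(\ref{eqCrIvg2}) over $\A(t)$ (an identity valid on every component of a sfio is valid globally), so the second part of Proposition~\ref{thCrIvg2}, applied with $\varphi\bul=\varphi\cir$, yields~(4).

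The two remaining arrows are immediate. For $(4)\Rightarrow(2)$, the crossing relations~(\ref{eqSDO}) exhibit $\Im\,A$ as a direct summand of $\A(t)^m$, that is, property~\ref{a} in Theorem~\ref{theoremIFD}, so $A$ is \lnl over $\A(t)$. For $(3)\Rightarrow(2)$, a \qnl map becomes \nl after localization at the elements of a sfio, which are in particular comaximal, so Theorem~\ref{theoremIFD} (point~\ref{i}$\,\Rightarrow\,$point~\ref{a}) gives local simplicity over $\A(t)$. The one genuinely delicate ingredient is the reduction to constant rank on each factor, together with the identification $\A(t)[1/r_k]=(\A[1/r_k])(t)$ that legitimizes invoking Theorem~\ref{thIGGRC2} componentwise; once everything is phrased through the equational characterization of Proposition~\ref{thCrIvg2}, the gluing steps are purely formal.
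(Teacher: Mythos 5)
Your proof is correct and follows essentially the same route as the paper: the paper's own justification of Theorem~\ref{thIGGRC2bis} consists precisely of the remark that the general case reduces to the constant-rank Theorem~\ref{thIGGRC2} via Lemma~\ref{lemIdIdm} and the sfio machinery of section~\ref{subsecGenRangNonConst}, which is exactly the reduction you carry out, with the details (faithful flatness of $\A\to\A(t)$ for $(1)\Leftrightarrow(2)$, the identification $\A(t)[1/r_k]=(\A[1/r_k])(t)$, and the equational gluing of the crossed pairs via Proposition~\ref{thCrIvg2}) supplied explicitly. One minor wording quibble: faithful flatness of $\A\to\A(t)$ and the fact that $\A(t)$ has no \idms beyond those of $\A$ are not \emph{equivalent} statements, but both are true here and either one suffices at the point where you invoke them, so the argument is unaffected.
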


Signalons que le lemme \ref{lemIdIdm} peut \^{e}tre am\'elior\'e en raison du fait 
suivant: le produit de deux \itfs localement principaux donn\'es respectivement
par $n$ et $m$ \gtrs est un \itf donn\'e par $n+m-1$ \gtrs (voir par exemple
\cite{dlqs}). Ceci nous permet d'obtenir la  complexit\'e suivante.

\begin{theorem} 
\label{thIGGG} 
Sur un anneau $\A$ fortement discret,
on peut tester si une matrice $A\in\A^{m{\times}n}$ est \lnlz, et en cas de 
r\'eponse positive, calculer un inverse \gne
de la matrice.  Soit $p=\min(m,n)$, $q=\max(m,n)$.
Si l'anneau est  $\cO(n^{s})$-fortement discret, ces calculs consomment 
$\cO(p^6\,q^{2}+p\,q^{4})$ \oparis et 
$\cO(p^4\,q+pq^{2s+1})$ autres \ops \elrsz.
Avec les m\^{e}mes bornes de complexit\'e, on calcule un \ing de $A$ et des 
matrices de
\prn sur le noyau et sur l'image de $A$.
\end{theorem}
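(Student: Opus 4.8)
Le plan est de se ramener au cas du rang constant, déjà traité au \tho \ref{thIGGRC}, en cassant l'anneau $\A$ en composantes fournies par un système fondamental d'\idms \orts (sfio), selon la stratégie annoncée à la section \ref{subsecGenRangNonConst}. On commencerait par calculer \emph{une seule fois} le \polcar de $AA\cir$ (si $m\leq n$, de $A\cir A$ sinon), d'où les \pols de Gram $g_1(t),\ldots ,g_p(t)$~; c'est l'unique calcul coûteux, en $\cO(p^6q^2)$ \oparisz, ce qui fournit le premier terme des bornes. Tout le reste porte sur les \coes $g_{k,\ell}$ de ces \polsz. Pour décider si $A$ est \lnlz, on utiliserait le \tho \ref{theoremIFD} (point \ref{g}) et les corollaires \ref{corpropIG}--\ref{corpropIGbis}~: cela revient à tester que chaque \id de Gram $\cC_k(A)$ est \idmz.

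On effectuerait ces tests pour $k=p,p-1,\ldots ,1$, en produisant au passage, par le lemme \ref{lemIdIdm} (sous sa forme améliorée), des générateurs \idms $e_k$ tels que $\cC_k(A)=\gen{e_k}$. L'observation décisive pour la complexité est qu'à l'étape $k$ on peut écrire $\cC_k(A)$ avec très peu de générateurs~: l'\idm $e_{k+1}$ déjà obtenu engendrant $\cC_{k+1}(A)$, on a
\[
\cC_k(A)=\gen{e_{k+1}}+\gen{g_{k,\ell}\,:\,\ell},
\]
qui ne compte que $\cO(kq)$ générateurs (les \coes de $g_k$) au lieu de tous les $g_{h,\ell}$, $h\geq k$.

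Une fois tous les $e_k$ connus, on poserait $r_k=e_k-e_{k+1}$~: les $r_k$ forment une sfio et, sur le localisé $\A[1/r_k]$, la matrice $A$ est \lnl de rang constant $k$. Sur chaque composante on appliquerait la caractérisation \ref{thIGGRC2i} du \tho \ref{thIGGRC2}~: calcul de $B(t)=\Adj^{(k)}_{A\cir}(A)$ par (\ref{eqDefAdjficirc2}), vérification de (\ref{eqTestCrucial}), puis descente de $\A(t)$ vers $\A$ en lisant cette \egt degré par degré en $t$ et en combinant au moyen d'une \coli $\sum_\ell\alpha_\ell g_{k,\ell}=r_k$ (dont l'existence est garantie par la primitivité de $t^{k(n-k)}g_k$ sur la composante, et qui est fournie par le test d'appartenance avec réponse complète). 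On obtiendrait une matrice $B'_k$ sur $\A$ telle que $AB'_kA=r_kA$, rendue réflexive par $B'_k\,A\,B'_k$, puis on recollerait par $\Ig(\varphi,\varphi\bul)=\sum_kr_k\,\Ig(\varphi_k,\varphi_k\bul)$ (avec $\varphi_k=r_k\varphi$), d'où aussi les \prns sur l'image et le noyau.

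Le point difficile, et la seule raison d'exposants aussi bas, est le coût des tests d'idempotence, \cad des tests \gui{$x\in\cC_k(A)^2\,?$}. Écrit naïvement, $\cC_k(A)^2$ possède $\cO((kq)^2)$ générateurs, ce qui ferait exploser le décompte. Mais lorsque $A$ est \lnlz, $\cC_k(A)=\cD_k(A)=\gen{e_k}$ est principal donc localement principal, et le fait rappelé juste avant l'énoncé (\cf \cite{dlqs}) permet de représenter $\cC_k(A)^2$ avec seulement $\cO(kq)$ générateurs, d'où des tests en $\cO((kq)^{s})$. La procédure resterait correcte sans hypothèse a priori~: la liste réduite engendre toujours un sous-\id de $\cC_k(A)^2\subseteq\cC_k(A)$, de sorte qu'un test positif certifie bien l'idempotence, tandis que l'absence de faux négatif provient de la principalité dans le cas \lnlz. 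Il ne resterait plus que le décompte, fastidieux mais sans difficulté conceptuelle~: $p$ étapes portant sur des \itfs à $\cO(kq)$ générateurs (déterminant du lemme \ref{lemIdIdm} et tests d'appartenance améliorés), plus les $p$ calculs de $\Adj^{(k)}_{A\cir}(A)$ du recollement~; on s'attend à ce qu'il livre exactement les $\cO(pq^4)$ \oparis et $\cO(p^4q+pq^{2s+1})$ autres \ops \elrs supplémentaires annoncées.
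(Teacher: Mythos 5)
Votre proposition est correcte et suit pour l'essentiel la m\^eme voie que l'article, dont la preuve se r\'eduit pr\'ecis\'ement \`a ce sch\'ema~: r\'eduction au cas du rang constant (\tho \ref{thIGGRC}) par le sfio issu des tests d'idempotence du lemme \ref{lemIdIdm} (section \ref{subsecGenRangNonConst}), avec l'am\'elioration tir\'ee de \cite{dlqs} sur le produit de deux \itfs localement principaux pour garder un petit nombre de \gtrsz. Votre justification de l'emploi de la liste r\'eduite de \gtrs de $\cC_k(A)^2$ sans savoir a priori que l'\id est localement principal (un test positif certifie l'idempotence, un test n\'egatif certifie que $A$ n'est pas \lnlz) explicite correctement un point que l'article laisse implicite.
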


\subsection{Les statisticiens indiens}  

Les num\'ericiens puis les statisticiens ont d\'evelopp\'e une th\'eorie
des \gui{\ingsz} d'abord pour le cas des corps $\RR$ et $\CC$ 
mais ensuite pour des anneaux commutatifs arbitraires. Cette th\'eorie est 
essentiellement l'\'equivalent des \thos \ref{theoremIFD} et
\ref{propPTFDec},  avec des pr\'eoccupations particuli\`eres de calculs explicites 
et de formules pr\'ecises. Ce sont les statisticiens indiens qui ont d\'evelopp\'e 
le plus cette th\'eorie.

En fait cette convergence n'est pas fortuite. A la base, il y a le fait que 
num\'eriquement, $\RR$ et $\CC$  ne se comportent 
\gui{pas vraiment} comme des  corps, \`a cause de la difficult\'e du test \`a 
z\'ero  (voire son impossibilit\'e) qui est la source de ph\'enom\`enes 
d'instabilit\'e, li\'es par exemple au calcul de l'inverse d'un nombre trop proche 
de 0.

Dans \cite{PB} la formule  (\ref{eqIg3}) est \'etablie pour le cas suivant: la 
matrice de $\varphi\bul$ est de la forme $M\tra{A^\star}N$ 
o\`u $x\mapsto x^\star$ est un \auto involutif de l'anneau $\A$, suppos\'e 
int\`egre. 
Nous n'avons pas trouv\'e la formule (\ref{eqIg3}) elle-m\^{e}me dans le cas le 
plus g\'en\'eral, mais cela ne signifie pas qu'elle n'existe pas dans la 
litt\'erature \gui{indienne}.
Il y a deux livres de r\'ef\'erence pour ces \'ecrits: \cite{Bha} et \cite{RM}. 

De mani\`ere surprenante, nous avons rarement trouv\'e chez les statisticiens 
indiens 
de formules analogues \`a l'\'equation (\ref{eqIg2}) du \tho \ref{thIgRangr} (il y 
en a une dans \cite{RM} dans un cas particulier) mais plut\^ot
des formules du style  (\ref{eqDefAdjficirc}) et (\ref{eqIg3}).


\newpage 
\tableofcontents

\end{document}